\mathchardef\gt="313E  
\mathchardef\lt="313C  
\mathchardef\colon="303A  
\newtheorem{theorem}{Theorem}[section]
\newtheorem{lemma}[theorem]{Lemma}
\newtheorem{prop}[theorem]{Proposition}
\newtheorem{cor}[theorem]{Corollary}
\theoremstyle{definition}
\newtheorem{definition}[theorem]{Definition}
\newtheorem{remark}[theorem]{Remark}
\newtheorem{exm}[theorem]{Example}
\newtheorem{exms}[theorem]{Examples}
\edef\cdrestoreat{
\noexpand\catcode\lq\noexpand\@=\the\catcode\lq\@}\catcode\lq\@=11
\def\thmitem{\def\theenumi{\@roman\c@enumi}
\def\labelenumi{{\normalfont(\theenumi)}}}
\newenvironment{roll}{\list{}{\advance\leftmargini2\p@
 \itemindent-12\p@
 \labelwidth\z@ }}{\endlist}
\newenvironment{rolld}{\list{}{\advance\leftmargini2\p@
 \itemindent-12\p@
 \labelwidth\z@ }}{\endlist}
\def\dfn#1{{\sl #1\/}}
\def\ct#1{\ensuremath{\mathscr{#1}}}
\def\Ct#1{\ensuremath{{\normalfont\textsf{\mdseries #1}}}}
\def\CM{\ensuremath{\mathcal{CM}}\xspace}
\def\SM{\ensuremath{\mathcal{SM}}\xspace}
\def\id#1{\ensuremath{\mathrm{id}_{#1}}}
\def\Id#1{\ensuremath{\mathrm{Id}_{#1}}}
\def\op{{}^{\textrm{\scriptsize op}}}
\def\opp{{}^{\textrm{\tiny op}}}
\def\Gr(#1){\ensuremath{\mathscr{G}_{#1}}}
\def\des#1{\ensuremath{\mathscr{D}\kern-.3ex\textit{es\kern.2ex}_{#1}}}
\let\tt\top
\let\Land\wedge
\let\Implies\Rightarrow
\let\ForalL\forall \def\Forall#1.{\ForalL_{#1}}
\let\ExistS\exists \def\Exists#1.{\ExistS_{#1}}
\def\LGE{\mathrel{\bgroup\ooalign{\hfil\raise.8ex\hbox{$\lt$}\hfil
 \crcr\hfil\raise-.5ex\hbox{$\eqslantgtr$}}\egroup}}
\def\Lge{\mathrel{\bgroup
 \ooalign{\hfil\raise.4ex\hbox{$\scriptscriptstyle\lt$}\hfil
 \crcr\hfil\raise-.3ex\hbox{$\scriptscriptstyle\eqslantgtr$}}\egroup}}
\def\lge{\mathchoice{\LGE}{\LGE}{\Lge}{\Lge}}
\def\gel{=}
\def\pr{\mathrm{pr}}
\def\cmp#1{\ensuremath{\{\kern-2.5pt|{#1}|\kern-2.5pt\}}}
\def\EH{\Ct{ED}\xspace}
\def\EE{\Ct{EED}\xspace}
\def\CH{\Ct{EqD}\xspace}
\def\QH{\Ct{QD}\xspace}
\def\Q#1{\ensuremath{\overline{#1}}}
\def\D{{\rotatebox[origin=c]{180}{\ensuremath{E}}\kern-.3ex}}
\def\B{{\rotatebox[origin=c]{180}{\ensuremath{A}}\kern-.6ex}}
\def\QD{\Q{{\rotatebox[origin=c]{180}{\ensuremath{E}}}}\kern-.3ex}
\def\start#1{\bigskip\noindent{\bfseries\itshape #1}}
\def\EC#1{\ensuremath
 {\left\lfloor\left.\kern-.2ex{#1}\kern-.2ex\right\rceil\right.}}
\def\Strut{\hbox{\vrule height.75em depth.35em width0pt}}
\def\ec#1{\ensuremath
 {\left\lfloor\left.\kern-.3ex\Strut{#1}\Strut\kern-.3ex\right\rceil\right.}}
\def\ds{\mathchoice{\textstyle\sum}{\sum}{\sum}{\sum}}
\def\dpd{\mathchoice{\textstyle\prod}{\prod}{\prod}{\prod}\kern-.2ex\strut}
\def\F{F^{ML}}
\def\Fm{G^{\mathbf{mtt}}}
\def\Fs{F^{\mathbf{mtt}}}
\def\emtt{\ensuremath{\mathbf{emtt}}\xspace}
\def\mtt{\ensuremath{\mathbf{mtt}}\xspace}
\def\CTT{\textrm{CoC}\xspace}
\def\prp{\kern.5ex\mbox{\normalfont\it prop}\kern.5ex}
\def\prps{\kern.5ex\mbox{\normalfont\it prop}_s\kern.5ex}
\def\ch#1{\ensuremath{\widehat{#1}}}
\def\blank{\mbox{--}}
\def\W#1#2{\ensuremath{\left[{#2}^{#1}\right]}}
\begin{document}

\title{Quotient completion for the foundation of constructive mathematics}
\author[M.E.~Maietti]{Maria Emilia Maietti}
\address{Dipartimento di Matematica Pura ed Applicata\\
Universit\`a di Padova\\
via Trieste 63\\
35121 Padova, Italy}
\email{maietti@math.unipd.it}
\author[G.~Rosolini]{Giuseppe Rosolini}
\address{DIMA, Universit\`a di Genova\\
via Dodecaneso 35\\
16146 Genova, Italy}
\email{rosolini@unige.it}
\thanks{Project MIUR-PRIN McTAFI provided
support for the research presented in the paper.}
\subjclass{03G30 03B15 18C50 03B20 03F55}

\keywords{Quotient completion, split fibration, type theory, setoid}

\begin{abstract}
We apply some tools developed in categorical logic
to give an abstract description of constructions used to formalize
constructive mathematics in foundations based on intensional type
theory. 
The key concept we employ is that of a Lawvere hyperdoctrine for which
we describe a notion of quotient completion. That notion includes 
the exact completion on a category with weak finite limits as an
instance as well 
as examples from type theory that fall apart from this.
\end{abstract}

\maketitle

\section{Introduction}\label{intro}
Category theory provides a language to investigate 
the syntax and the semantics of formal systems on the same ground, as
it provides an appropriate abstraction  useful to bring to the
foreground an algebraic structure that usually remains hidden behind
both.
In fact, the present paper is a plain example of how category
theory offers a language which is suitable to describe a key property
that foundations of constructive mathematics 
should have according to \cite{mtt}.

In the following part of the Introduction, we address the relevance of
such abstract properties. After that we describe the
category-theoretic concepts that are dealt with in 
the paper. Finally, we examine two quotient models based on
intensional type theory. 

\paragraph*{The need of quotient completion to found constructive
 mathematics}
There are various foundations for constructive mathematics available
in the literature: some are formulated in axiomatic set theory, others
in category theory, yet others in type theory. In fact there is no
foundation for constructive mathematics as standard as
the theory ZFC is for classical mathematics.
The authors of \cite{mtt} propose to look for a minimalist foundation
which could form a common core for the most relevant constructive
theories. Its finalized construction is in \cite{m09}.

In {\it loc.cit.} the authors also state that a foundation for
constructive mathematics should make it evident which key aspects
differentiate it from classical mathematics. For instance,
contrary to classical proofs, 
constructive proofs enjoy the {\it existence property}, {\it i.e.} one
can extract programs that compute witnesses of existential statements
occurring in them. Even more, any proof of a constructive system 
should be seen as a program. Hence, ideally, a foundation for
constructive mathematics should be at the same time a set theory,
in which to formalize mathematical theorems, and a programming language
in which to extract the computational contents of mathematical
proofs. 

Type theory provides examples of such formal systems,
such as Martin-L{\"o}f's Intensional Type Theory~\cite{PMTT} or 
Coquand's Calculus of Constructions \cite{tc90}.
But there is a problem in adopting such type theories as a foundation
for constructive mathematics. First of all they do not validate
extensional features used in the everyday practice of mathematics
such as extensional equality of sets or of functions, nor do they
offer quotient constructions.  
Indeed, if one wants that these systems act as useful functional
programming languages, they must meet decidability issues on typing of
proofs which are incompatible with extensional features. This is argued
in more formal terms (with the notion of proofs-as-programs theory) in
\cite{mtt}.

The solution adopted in practice in the formalization of mathematics
in type theory is to represent extensional concepts in a model built
on top of the type theory, for example by using
setoids, see \cite{disttheshof,ven}. 
A main drawback of this approach for a constructive mathematician is
that working with setoids---and especially with dependent setoids---is
extremely complicated compared to adopting a foundation in the same
vein as ZFC for classical mathematics, like for instance Aczel's CZF
or Friedman's IZF. A natural solution is to work with an 
{\it axiomatization of the quotient model of setoids supported by the
intensional type theory} instead of working 
{\it directly in the model}.
In a sense, someone who would like to have a foundation for
constructive mathematics based on a type theory is naturally led to
abandon the traditional view of having a unique system to formalize
mathematics in favour of a two-level foundation where one level is
used for program extraction and the other to formalize mathematics.

This is the central idea of the  
{\it notion of constructive foundation} put forward 
in \cite{mtt}. There it is required that a foundation for
constructive mathematics should be {\it a two-level theory}: one
level, named {\it intensional}, should be used as a programming
language; the other, which is called {\it extensional}, should be
closed under standard extensional constructs in order to be used as
the actual formal system in which to 
perform mathematical proofs, and it should be seen as an abstraction
of the intensional level according to Sambin's forget-restore
principle, see \cite{toolbox}.  In \cite{m09} it was stated that to
satisfy the 
link between the two levels in \cite{mtt} it is enough  to interpret 
the extensional level in the intensional one {\it by means of a
quotient completion of the latter}, {\it i.e.} to see the extensional
level as a 
fragment of the internal language of a quotient completion built on 
the intensional one. The two-level minimalist foundation in \cite{m09}
provides an example of  such a constructive foundation.

What remains to specify in the notion of constructive foundation in \cite{m09}
is what one means ``abstractly'' by quotient completion.

In particular one wants to see whether the construction performed in
\cite{m09}---the quotient model built over the intensional level to
interpret the extensional level---is an instance of a free
construction on categories with structure.

In the literature on category theory various constructions of quotient
completion have been studied, for example  in
\cite{CarboniA:regec,RosoliniG:typtec,RosoliniG:loccce}.
These constructions all rely on defining quotients as stable effective
coequalizers of monic equivalence relations. Hence, they all produce
exact categories---and indeed they usually go under the name of
{\it exact completions}. 

But, as we observe in the present paper, 
the construction of quotients adopted in \cite{m09} does not
necessarily lead to an exact category and therefore it cannot be an
exact completion.
This motivates the quest for a more general notion of quotient
completion than the exact completion. 

In this paper we accomplish this task by relativizing
the notion of quotient to that of a suitable hyperdoctrine: the fibers 
act as the ``logic'' in which to consider equivalence relations. With
respect to those, we introduce a notion of 
quotient in the base category of the hyperdoctrine, and we prove that
that notion is algebraic. In other words, there is a universal
construction that ``freely adds'' quotients for the equivalence
relations without adding any further power to the logic.

We use a weakened notion of Lawvere 
hyperdoctrine~\cite{LawvereF:adjif,LawvereF:equhcs,LawvereF:diaacc,LawvereF:setfm},
here simply called ``elementary doctrine'', with respect to which
we present a universal construction of quotient completion, which we
call ``elementary quotient completion''.

Instances of this construction include both the quotient model in
\cite{m09} and the exact completion  of a category with finite limits 
in \cite{CarboniA:freecl}.
Indeed, the study of the elementary quotient completion helps to isolate,
in the doctrine setting,
the properties of a model ``with quotients'' and to handle those
properties independently from one another. 
Thanks to the more general setting than that, say, of categories with
(weak) finite limits, it is also possible to analyse the properties of
the elementary quotient completion that are similar to the exact
completion, such as closure under exponentials in
\cite{RosoliniG:loccce}.

As a biproduct of all this, we also obtain a clear
explanation of the well-known result that the category of total setoids 
\`a la Bishop built over Martin-L{\"o}f's type theory is the exact
completion of an appropriate category with weak finite limits, see
\cite{notepal,CarboniA:somfcr,RosoliniG:typtec}.

\section{Doctrines}
We introduce the notion of doctrine
that will be used to specify that of {\it quotient}. This notion
is an obvious generalization of that of a hyperdoctrine. 
Hyperdoctrines were introduced, in a series of seminal papers, by
F.W.~Lawvere to synthesize the structural properties of logical
systems, see 
\cite{LawvereF:adjif,LawvereF:equhcs,LawvereF:diaacc,LawvereF:setfm}. His
crucial intuition was to consider logical languages and theories as
indexed categories and to study their 2-categorical properties. For
instance, connectives and quantifiers are determined by
adjunctions. That approach proved to be extremely fruitful, see 
\cite{MakkaiM:firocl,LambekJ:inthoc,jacobbook,TaylorP:prafom,OostenJ:reaait} 
and the references therein.

Recall from \cite{LawvereF:adjif} that a hyperdoctrine is a
functor $F:\ct{C}\op\longrightarrow\Ct{Heyt}$
from a cartesian closed category $\ct{C}$ to the category of Heyting
algebras satisfying some further conditions: for every arrow 
$f:A\to B$ in $\ct{C}$, the homomorphism $F_f:F(B)\to F(A)$ of
Heyting algebras---$F_f$ denotes the action of the functor $F$ on the
arrow $f$---has a left adjoint $\D_{f}$ and a right adjoint $\B_{f}$
satisfying the Beck-Chevalley condition.

The intuition is that a hyperdoctrine determines an appropriate
categorical structure to abstract both notions of
first order theory and of interpretation.

A many-sorted first order theory gives rises directly to a
hyperdoctrine $F:\ct{C}\op\longrightarrow\Ct{Heyt}$---a detailed
presentation is given as example~\ref{lta}:
\begin{itemize}
\item the objects of $\ct{C}$ are declarations of sort variables
\item a morphism $f:A\to B$ of $\ct{C}$ is a list of terms of the
sorts in $B$, in the variables in the declaration $A$
\item an object $P$ in $F(A)$ is a property written with the variables
declared in $A$
\item a morphism $P\leq Q$ in $F(A)$ shows that
property $Q$ follows from the property $P$
\item a functor $F_f:F(B)\to F(A)$ represents the substitution, in
properties of the sort $B$, of the terms $f$ for the variables in $B$
\item the adjoints $\D_f$ and $\B_f$ represent the existential
quantifier and the universal quantifier, respectively.
\end{itemize}
Another instance of hyperdoctrine
$F:\ct{C}\op\longrightarrow\Ct{Heyt}$ is the following:
\begin{itemize}
\item the objects of $\ct{C}$ are sets
\item a morphism $f:A\to B$ of $\ct{C}$ is a function into the set
$B$ from the set $A$
\item an object $P$ in $F(A)$ is a subset of the set $A$
\item a morphism $P\leq Q$ in $F(A)$ indicates that $Q$ contains $P$
\item a functor $F_f:F(B)\to F(A)$ acts by inverse image along $f$ on
subsets of the set $B$
\item the adjoints $\D_f$ and $\B_f$ must be evaluated, on a subset
$P$ of $A$, respectively as
$\begin{array}{l}\D_f(P)=\left\{b\in B\mid 
\exists a\in A [b=f(a)\Land a\in P]\right\}\\
\B_f(P)=\left\{b\in B\mid 
\forall a\in A[b=f(a)\Implies a\in P]\right\}\end{array}$
\end{itemize}
Thus a model of the many-sorted first
order theory determines precisely a functor from the former
hyperdoctrine to the latter, as already pointed 
out in \cite{LawvereF:adjif,LawvereF:equhcs}.

Our aim is to take advantage of the algebraic presentation
of logic offered by hyperdoctrines and we shall consider a more
general notion with the structure needed to define a
quotient of an equivalence relation from that perspective. 
We shall follow \cite{LawvereF:equhcs} 
and use the word ``doctrine'' with some attribute
to christen the more general notion and others derived from it.
We shall be able to separate the logical components
producing a universal construction of completion by quotients of
doctrines.

Somehow reflecting the essential logical structure that is needed in
order to present the theory of an equivalence relation, the basic
concept is that of a contravariant functor from a category with finite
products into the category of inf-semilattices and inf-preserving
maps.

\begin{definition}\label{ph}
A \dfn{primary doctrine} is a functor 
$P:\ct{C}\op\longrightarrow\Ct{InfSL}$
from (the opposite of) a category \ct{C} with finite products
to the category of inf-semilattices, {\it i.e.} a contravariant functor 
$P:\ct{C}\op\longrightarrow\Ct{Pos}$ in the category of partial orders
$\Ct{Pos}$
such that
\begin{itemize}
\item for every object $A$ in \ct{C}, the
partial order $P(A)$ has finite infs
\item for every arrow $f:A\to B$ in \ct{C}, the monotone map
$P_f:P(B)\to P(A)$ preserves them.\footnote{Here and in the
sequel we write the action of a doctrine $P$ on an arrow as
$P_f$.}
\end{itemize}
\end{definition}

The structure of a primary doctrine is just what is needed
to handle a many-sorted logic with binary conjunctions and a true
constant, as seen in the following example. 

\begin{exm}\label{lta}
The leading logical example is the indexed order
$LT:\ct{V}\op\longrightarrow\Ct{InfSL}$ given by the
Lindenbaum-Tarski algebras of well-formed formulae of a first order
theory (with only one sort).

Given a theory $\mathcal{T}$ in a first order language
$\mathcal{L}$, the domain category of the functor
is the category \ct{V} of lists of variables and term substitutions:
\begin{roll}
\item[object of \ct{V}] are lists\footnote{The empty list is included.}
of distinct variables $\vec x=(x_1,\ldots,x_n)$
\item[arrows] are lists of substitutions\footnote{We shall
employ a vector notation for lists of terms in the language as well as
for simultaneous substitutions such as $[\vec t/\vec y]$ in place of
$[t_1/y_1,\ldots,t_m/y_m]$. We shall also extend vectorial notation to
conjunctions and quantifiers writing $\vec t=\vec s$ for the
conjunction $t_1=s_1\Land\ldots\Land t_\ell=s_\ell$, provided the lists
$\vec t$ and $\vec s$ are the same length, and writing $\Exists\vec x.$
instead of $\Exists x_1.\ldots\Exists x_n.$.} for variables
$[\vec t/\vec y]:\vec x\to \vec y$ where each term $t_j$ in $\vec t$
is built in $\mathcal{L}$ on the variables $x_1,\ldots,x_n$
\item[composition] 
$\xymatrix@1@=4em{\vec x\ar[r]^{[\vec t/\vec y]}&
\vec y\ar[r]^{[\vec s/\vec z]}&\vec z}$
is given by simultaneous substitutions
$$\xymatrix@=10em{\vec x
\ar[r]^{\left[s_1[\vec t/\vec y]/z_1,\ldots,s_k[\vec t/\vec y]/z_k\right]}
&\vec z}$$
\end{roll}

The product of two objects $\vec x$ and $\vec y$ is given by a(ny)
list $\vec w$ of as many distinct variables as the sum of the number
of variables in $\vec x$ and of that in $\vec y$. Projections are
given by substitution of the variables in $\vec x$ with the
first in $\vec w$ and of the variables in $\vec y$ with the last in
$\vec w$.

The functor $LT:\ct{V}\op\longrightarrow\Ct{InfSL}$ is given
as follows: for a list of distinct variables $\vec x$,
the category $LT(\vec x)$ has
\begin{roll}
\item[objects] equivalence classes $\ec{W\kern.2ex}$ of well-formed
formulae $W$  of 
$\mathcal{L}$ with no more free variables than $x_1$,\ldots,$x_n$ with
respect to provable reciprocal consequence 
$W\dashv\vdash_{\mathcal{T}}W'$ in $\mathcal{T}$.\footnote{We shall
denote an equivalence class with representative $x$ as
$\ec{x\kern.2ex}$ in order to leave plain square brackets available for
other situations.} 
\item[arrows] $\ec{W\kern.2ex}\to\ec{V\kern.2ex}$ are
the provable consequences $W\vdash_{\mathcal{T}}V$ in $\mathcal{T}$ for
some pair of representatives (hence for any pair)
\item[composition] is given by the cut rule in the logical calculus
\item[identities] $\ec{W\kern.2ex}\to\ec{W\kern.2ex}$
are given by the logical rules $W\vdash_{\mathcal{T}}W$
\end{roll}
Observe that, in particular, for a list of distinct variables $\vec x$, the category
$LT(\vec x)$ has finite limits: products are given by conjunctions of
formulae and a terminal object is any provable formula, such as
$\vec x=\vec x$, that is any formula
equivalent to the true constant.
\end{exm}

\begin{exm}\label{model}
The following example of primary doctrine 
$S:\ct{S}\op\longrightarrow\Ct{InfSL}$ is the set-theoretic hyperdoctrine
described in the introduction and it can be considered in
an(y) axiomatic set theory such as ZF. We briefly recall its definition:
\begin{itemize}
\item $\ct{S}$ is the category of sets and functions,
\item $S(A)$ is the poset category of subsets of the set $A$
whose morphisms are inclusions,
\item a functor $S_f:S(B)\to S(A)$ acts as the inverse image $f^{-1}U $
on a subset $U$ of the set $B$.
\end{itemize}
\end{exm}

The example~\ref{lta} suggests that, by considering only 
doctrines, from a logical point of view one restricts attention
to the mere existence of a proof of a consequence, {\it i.e.} 
one only deals with proof irrelevance.

As already pointed out in \cite{LawvereF:adjif,LawvereF:equhcs}, a
set-theoretic model of a first order theory determines precisely a
functor from the doctrine $LT$ to the doctrine $S$ that
preserves all the structure of a primary doctrine.

As the example \ref{model},
also the example \ref{lta} gives rise to a Lawvere hyperdoctrine
when performed on a many-sorted first order
theory giving rise to a cartesian closed  base category. And the
characterization of set-theoretic model extends directly, see 
{\it loc.cit.}

\begin{remark}\label{wow2}
In many senses it is more general---and more elegant---to treat the
abstract theory of the relevant structures for the present paper in
terms of fibrations. For instance, a different, but equivalent
presentation of the 
structure above is as a faithful fibration $p:\ct{A}\to\ct{C}$
between categories with binary products such that $p$ preserves
them and has a right inverse right adjoint.

In fact, a primary doctrine
$P:\ct{C}\op\longrightarrow\Ct{InfSL}$
determines a faithful fibration $p_P:\Gr(P)\to\ct{C}$
by a well-known, general construction due to
Grothendieck, see \cite{GrothendieckA:catfd,jacobbook}, which applies
to indexed categories. We recall very
briefly that construction in the present situation. 
The data for the \dfn{total} category $\Gr(P)$ of $P$ are as follows:
\begin{roll}
\item[objects of $\Gr(P)$] are pairs  $(A,\alpha)$ such that 
$A$ is an object in \ct{C} and $\alpha$ is an object in $P(A)$.
\item[an arrow $(f,\phi):(A,\alpha)\to(B,\beta)$] is a pair of an
arrow $f:A\to B$ in \ct{C} and an arrow $\phi:\alpha\to P_f(\beta)$
\item[composition] of $(f,\phi):(A,\alpha)\to(B,\beta)$ and
$(g,\psi):(B,\beta)\to(C,\gamma)$ is 
$(g\circ f,P_f(\psi)\circ\phi)$. 
\end{roll}
One checks that a product of objects $(A,\alpha)$ and $(B,\beta)$ is
given by 
$$\xymatrix@C=3em{(A,\alpha)&(C,\chi)\ar[l]_{(\pr_1,\pi_1)}
\ar[r]^{(\pr_2,\pi_2)}&(B,\beta)}$$ 
where 
$$\xymatrix@C=3em{A&C\ar[l]_{\pr_1}\ar[r]^{\pr_2}&B}$$
is a product in \ct{C} and 
$$\xymatrix@C=3em{P_{\pr_1}(\alpha)&
\chi\ar[l]_(.35){\pi_1}\ar[r]^(.35){\pi_2}&
P_{\pr_2}(\beta)}$$
is a product in $P(C)$.

The first projection extends to a functor $p_F:\Gr(P)\to\ct{C}$ which 
is easily seen to be faithful with a right inverse right adjoint.

On the other hand, given a 
faithful fibration $p:\ct{A}\to\ct{C}$, one considers the functor
$\ch{p}:\ct{C}\op\longrightarrow\Ct{InfSL}$ which maps an object $A$
in \ct{C} to the partial order which is the poset reflection of the
preorder of the vertical arrows on $A$, see \cite{jacobbook}, 
{\it i.e.} one first considers the subcategory $p^A$ of \ct{A} 
consisting of those objects $\alpha$ such that $p(\alpha)=A$ and a map
$g:\alpha\to\alpha'$ of \ct{A} is in $p^A$ if $p(g)=\id{A}$;
faithfulness of $p$ ensures that the category $p^A$ is a
preorder. Product preservation ensures that $p^A$ has binary products,
the right inverse right adjoint ensures that $p^A$ has a terminal
object. So the poset reflection of $p^A$ produces the partial order 
$\ch{p}(A)$ on the equivalence classes of objects of $p^A$ with
respect to the equivalence given by isomorphism, where
$\ec{\alpha}\leq\ec{\alpha'}$ if there is an arrow
$g:\alpha\to\alpha'$ in $p^A$ for some pair of representatives (hence
for any pair), and the partial order has finite infs.

For an arrow $f:B\to A$ in \ct{C}, the functor
$\ch{p}_f\, :\ch{p}(A)\to\ch{p}(B)$ sends an equivalence class
$\ec{\alpha}$ to the equivalence class $\ec{\beta}$ such that there
is a cartesian lifting $g:\beta\to\alpha$ of $f$.

Setting up an appropriate 2-category for each structure (one for
primary doctrines, one for faithful fibrations as above), it is
easy to see that the two constructions extend to an equivalence
between those 2-categories.

Computing the total category of each of the examples of doctrines in
\ref{lta} and \ref{model}, one gets the following.

The total category $\Gr(LT)$ is the syntactic presentation of the
$\mathcal{L}$-definable subsets of (the finite powers of) a set
underlying a model of the theory $\mathcal{T}$ with functions defined
by terms in $\mathcal{L}$.

The total category $\Gr(S)$ is the full subfibration on subset
inclusions of the codomain fibration
$\textrm{cod}:\ct{S}^{\to}\longrightarrow\ct{S}$ on the category of
sets and functions.
\end{remark}

\begin{definition}\label{elh}
A primary doctrine 
$P:\ct{C}\op\longrightarrow\Ct{InfSL}$
is \dfn{elementary}
if, for every $A$ and $C$ in \ct{C},
the functor $P_{id_C\times \Delta_A}: P(C\times (A\times A))\to
P(C\times A)$\footnote{We write $\Delta_A$ and $f\times f'$ 
respectively for the map $<\id{A},\id{A}>$ and for the map 
$<f\circ\pr_1,f'\circ\pr_2>:A\times A'\to B\times B'$,
provided $f:A\to B$ and $f':A'\to B'$.}
has a left adjoint 
$\D_{id_C\times \Delta_A}$, and these satisfy
\begin{rolld}
\item[\dfn{Frobenius reciprocity}:] for every $A$ and $C$ in \ct{C},
for $\alpha$ in $P(C\times (A\times A))$, $\beta$ in $P(C\times A)$,
the canonical arrow 
$$\D_{id_C\times \Delta_A}(P_{id_C\times \Delta_A}(\alpha)
\Land_{C\times A}\beta)\leq
\alpha\Land_{C\times (A\times A)}\D_{id_C\times \Delta_A}(\beta)$$
in $P(C\times (A\times A))$ is iso (hence an identity).
\item [\dfn{Beck-Chevalley condition}:]
for any pullback diagram
$$\xymatrix{B\times A\ar[r]^{\quad id_B\times \Delta_A\qquad  }\ar[d]_{f\times id_A}& B\times A\times A
\ar[d]^{f\times id_{A \times A} }\\C\times A\ar[r]^{\quad id_C\times \Delta_A\qquad  }&C\times A\times A}$$
the canonical arrow 
$\D_{id_B\times \Delta_A}P_{f\times id_A}(\alpha)\leq P_{f\times id_{A \times A}} 
\D_{id_C\times \Delta_A}(\alpha)$  is iso in $P(B\times A\times A)$ for any
$\alpha$ in $P(C\times A)$.
\end{rolld}
\end{definition}
We refer the reader to \cite{LawvereF:equhcs,jacobbook} for a thorough
analysis of the concepts in the definition just given.

\begin{remark}
\label{equ}
For an elementary doctrine 
$P:\ct{C}\op\longrightarrow\Ct{InfSL}$, for any object $A$ in
\ct{C}, taking $C$ a terminal object, the conditions in \ref{elh} 
ensure the existence of a left adjoint $\D_{\Delta_A}$ to 
$P_{\Delta_A}:P(A\times A)\to P(A)$. On an object $\alpha$ in
$P(A)$ it can be written as 
\begin{equation}\label{delta}
\D_{\Delta_A}(\alpha)=P_{\pr_1}(\alpha)\Land_{A\times A}\D_{\Delta_A}(\tt_A)=
P_{\pr_2}(\alpha)\Land_{A\times A}\D_{\Delta_A}(\tt_A).
\end{equation}
where $\tt_A$ is the terminal object of $P(A)$.

Because of (\ref{delta}), an abbreviation like $\delta_A$ for the
object $\D_{\Delta_A}(\tt_A)$ is useful.
\end{remark}

\begin{exm}\label{ltae}
For $\mathcal{T}$ a first order theory, the primary fibration
$LT:\ct{V}\op\longrightarrow\Ct{InfSL}$, as defined in 
\ref{lta}, is elementary exactly when $\mathcal{T}$ has an equality
predicate.
\end{exm}

\begin{exm}\label{monoe}
The standard example of an elementary doctrine is the fibration of
subobjects. Consider a category \ct{X} with
finite products and pullbacks, {\it i.e.} with right adjoints to the
diagonal functors $\ct{X}\to\ct{X}^2$ and 
$\ct{X}\to
\ct{X}^{\strut\smash{\begin{array}
{@{}r@{}}\scriptstyle\downarrow\\[-1.8ex]\scriptstyle\to\cdot\kern.2ex
\end{array}}}$.
The functor $S:\ct{X}\op\longrightarrow\Ct{InfSL}$ assigns to any
object $A$ in \ct{X} the poset $S(A)$ whose
\begin{roll}
\item[objects] are subobjects 
$\ec{\smash{\xymatrix@1{\alpha:X\ \ar@{>->}[r]&A}}}$
in \ct{X} with codomain $A$
\item[$\ec{\alpha}\leq\ec{\alpha'}$] if there is
a commutative diagram
$$\xymatrix@C=1em{
X\strut\ar@{>->}[rd]_{\alpha}\ar[rr]^{x}&&X'\strut\ar@{>->}[ld]^{\alpha'}\\
&A}$$
for a (necessarily unique) arrow $x:X\to X'$.
\end{roll}
For an arrow $f:B\to A$, the assignment 
mapping an equivalence $\ec{\alpha}$ in $S(A)$ to that represented
by the left-hand arrow in the (chosen) pullback
$$\xymatrix{Y\ar[d]_{\beta}\ar[r]&X\ar[d]^{\alpha}\\B\ar[r]_f&A}$$
produces a functor $S_f:S(A)\to S(B)$ which preserves products.

Post-composition with an equalizer provides the elementary structure
since equalizers are monic.

In example~\ref{monoe}, we used the same notation for the functor $S$
as in example~\ref{model} because that is a particular instance of
\ref{monoe} when $\ct{X}$ is the category $\ct{S}$ of sets and
functions since each subobject
$\ec{\smash{\xymatrix@1{\alpha:X\ \ar@{>->}[r]&A}}}$ 
has a unique inclusion 
$\xymatrix@1{U\ar@<-.3ex>@{^(->}[r]&A}$ among its
representatives.
\end{exm}

\begin{exm}\label{weaklhy}
Consider a cartesian category \ct{C} with 
weak pullbacks,
{\it i.e.} for every pair $f:B\to A$, $g:C\to A$ of arrows in \ct{C},
there is a commutative diagram
$$\xymatrix@=4em{V_{f,g}\ar[d]_{\pr_{2,f,g}}\ar[r]^{\pr_{1,f,g}}&
 C\ar[d]^{g}\\B\ar[r]_{f}&A}$$
such that, for any $s:T\to B$, $t:T\to C$ satisfying 
$f\circ s=g\circ t$, there is $u:T\to V_{f,g}$ such that 
$t=\pr_{1,f,g}\circ u$ and $s=\pr_{2,f,g}\circ u$.

One can consider the functor $\Psi:\ct{C}\op\longrightarrow\Ct{InfSL}$
given by the poset reflection of each comma category $\ct{C}/A$ as
$A$ varies over the objects of \ct{C}. So, for a
given object $A$ in \ct{C}, one first considers the preorder whose
\begin{roll}
\item[objects] are arrows $\alpha:X\to A$ in \ct{C} with codomain $A$
\item[$\alpha\leq\alpha'$] if there is
an arrow $x:X\to X'$ in \ct{C} providing a commutative diagram
$$\xymatrix@C=1em{X\ar[rd]_{\alpha}\ar[rr]^{x}&&X'\ar[ld]^{\alpha'}\\&A}$$
\end{roll}
Then quotients that preorder with respect to the reciprocal
relation $\alpha\lge\alpha'$ to obtain the poset $\Psi(A)$.

For an arrow $f:B\to A$, the assignment that maps an equivalence class
$\ec{\alpha}$ in $\Psi(A)$ to that represented by the left-hand arrow
$\pr_{2,f,\alpha}:V_{f,\alpha}\to B$ in the weak (chosen) pullback 
$$\xymatrix@=4em{V_{f,\alpha}\ar[d]_{\pr_{2,f,\alpha}}\ar[r]^{\pr_{1,f,\alpha}}&
 C\ar[d]^{\alpha}\\B\ar[r]_{f}&A}$$
produces a functor $\Psi_f:\Psi(A)\to\Psi(B)$. It is easy to check
that it preserves products---which are given by weak pullbacks over
$A$.

The elementary structure is given by post-composition with the
equalizer.

This example is a slight generalization of a similar one given in
\cite{LawvereF:equhcs}.
\end{exm}

\begin{remark}
Note that the apparently minor difference between the example in 
\ref{monoe} and that in \ref{weaklhy} depends crucially on the
possibility of factoring an arbitrary arrow as a retraction 
followed by a monomorphism: for instance, in the category \ct{S} of
sets and functions, the fact that the two doctrines are equivalent can
be achieved thanks to the Axiom of Choice. 
\end{remark}

Consider the 2-category \EH of elementary doctrines:
\begin{roll}
\item[a 1-arrow] from $P$ to $R$ is a pair $(F,b)$ where
$F:\ct{C}\to\ct{D}$ is a functor which preserves finite products and
$b$ is a natural transformation
$$
\xymatrix@C=4em@R=1em{
{\ct{C}\op}\ar[rd]^(.4){P}_(.4){}="P"\ar[dd]_{F\op}&\\
           & {\Ct{InfSL}}\\
{\ct{D}\op}\ar[ru]_(.4){R}^(.4){}="R"&\ar"P";"R"_b^{\kern-.4ex\cdot}}
$$
such that for every object
$A$ in \ct{C}, the functor $b_A:P(A)\to R(F(A))$ preserves all the
structure. More explicitly, $b_A$ preserves finite meets and, for every
object $A$ in \ct{C}, 
\begin{equation}\label{two}
b_{A\times A}(\delta_A)\gel R_{<F(\pr_1),F(\pr_2)>}(\delta_{F(A)}).
\end{equation}
\item[a 2-arrow] $\theta:(F,b)\to(G,c)$ is a natural transformation
$\xymatrix@1@C=1.5em{\theta:F\ar[r]^(.6){.}&G}$ such that in the
diagram 
$$
\xymatrix@C=9em@R=1em{
{\ct{C}\op}\ar[rd]^(.45){P}_(.45){}="P"
\ar@<-1ex>@/_/[dd]_{F\opp}^{}="F"\ar@<1ex>@/^/[dd]^{G\opp}_{}="G"&\\
           & {\Ct{InfSL}}\\
{\ct{D}\op}\ar[ru]_(.45){R}^(.45){}="R"&
\ar@/_/"P";"R"_{b\kern.5ex\cdot\kern-.5ex}="b"
\ar@<1ex>@/^/"P";"R"^{\kern-.5ex\cdot\kern.5ex c}="c"
\ar"G";"F"_{.}^{\theta\opp}\ar@{}"b";"c"|{}}
$$
it is $b_A(\alpha)\leq R_{\theta_A}(c_A(\alpha))$ for every object $A$
in \ct{C} and every $\alpha$ in $P(A)$.
\end{roll}

The following definition is similar to \ref{elh} and contribute the
final part of the essential ``logical'' structure of an indexed
poset.

\begin{definition}\label{exh}
A primary doctrine 
$P:\ct{C}\op\longrightarrow\Ct{InfSL}$
is \dfn{existential}
if, for $A_1$ and $A_2$ in \ct{C}, for a(ny) projection
$\pr:A_1\times A_2\to A_i$, $i=1,2$,
the functor $P_{\pr_i}:P(A_i)\to P(A_1\times A_2)$ has a left adjoint
$\D_{\pr_i}$, to which we shall unimaginatively refer as
\dfn{existential}, and these satisfy
\begin{rolld}
\item[\dfn{Beck-Chevalley condition}:] for any pullback diagram
$$\xymatrix{X'\ar[r]^{\pr'}\ar[d]_{f'}&A'\ar[d]^f\\X\ar[r]^{\pr}&A}$$
with $\pr$ a projection (hence also $\pr'$ a projection), for any
$\beta$ in $P(X)$, the canonical arrow 
$\D_{\pr'}P_{f'}(\beta)\leq P_f\D_\pr(\beta)$ in $P(A')$ is iso; 
\item[\dfn{Frobenius reciprocity}:] for $\pr:X\to A$ a projection,
$\alpha$ in $P(A)$, $\beta$ in $P(X)$, the canonical arrow
$\D_\pr(P_\pr(\alpha)\Land_X\beta)\leq\alpha\Land_A\D_\pr(\beta)$ in
$P(A)$ is iso.
\end{rolld}
\end{definition}
About this notion we refer the reader to \cite{LawvereF:adjif,jacobbook}.

\begin{exms}
\noindent(a)
The primary fibration $LT:\ct{V}\op\longrightarrow\Ct{InfSL}$, as
defined in \ref{lta} for a first order theory $\mathcal{T}$, is
existential. An existential 
left adjoint to $P_\pr$ is computed by quantifying existentially the
variables that are not involved in the substitution given by the
projection, {\it e.g.} for the projection $\pr=[x/z]:(x,y)\to(z)$
and a formula $W$ with free variables at most $x$ and $y$, $\D_\pr(W)$
is $\Exists y.(W[z/x])$.

We stop to note that the example reveals the meaning of the
Beck-Chevalley condition: suppose $S$ and $T$ are sorts and consider
the morphism $[x/z]:(x,y)\to(z)$. On a formula $W$ with free variables
at most $x$ and $y$, 
for any morphism $[t/z]:(w_1,\ldots,w_n)\to(z)$,
the diagram
$$\xymatrix@C=7.5em{
(w_1,\ldots,w_n,y)
\ar[d]_{[t/x,y]}\ar[r]^(.51){[w_1/w_1,\ldots,w_n/w_n]}&
(w_1,\ldots,w_n)\ar[d]^{[t/z]}\\
(x,y)\ar[r]^(.55){[x/z]}&(z)}$$
is a pullback and the Beck-Chevalley condition rewrites the fact that
substitution commutes with quantification as
$$
\Exists y.(W[t/x])\equiv(\Exists y.W[z/x])[t/z]
$$
since the declaration $(w_1,\ldots,w_n)$ ensures that
$y$ does not appear in $t$.

\noindent(b)
For a cartesian category \ct{C} with weak pullbacks, 
the elementary doctrine $\Psi:\ct{C}\op\longrightarrow\Ct{InfSL}$
given in \ref{weaklhy} is existential. Existential left adjoints
are given by post-composition.

\noindent(c)
The primary doctrine in example~\ref{model} is existential: on a subset
$P$ of $A$, the adjoint $\D_\pr$, for a projection $\pr:A\to B$, must
be evaluated as 
$\D_\pr(P)=\left\{b\in B\mid 
\exists a\in A [a\in \pr^{-1}\left\{b\right\}\cap P]\right\}$, usually
called the \dfn{image of $P$ along} $\pr$.
\end{exms}

\begin{remark}\label{here}
In an existential elementary doctrine, for every map $f:A\to B$
in \ct{C} the functor $P_f$ has a left adjoint $\D_f$ that can be
computed as 
 $$\D_{\pr_2}(P_{f\times \id B}(\delta_B)\wedge P_{\pr_1}(\alpha))$$
for $\alpha$ in $P(A)$, where $\pr_1$ and $\pr_2$ are the projections
from $A\times B$.
\end{remark}

\begin{exm}
For a category \ct{X} with products and pullbacks, the elementary
doctrine $S:\ct{X}\op\longrightarrow\Ct{InfSL}$ in \ref{monoe} is
existential if and only if \ct{X} has a stable proper factorization 
system $(\mathcal{E},\mathcal{M})$, 
see \cite{HughesJ:facsft,PavlovicD:mapsii}. So, in particular, for
\ct{X} regular, the subobject doctrine
$S:\ct{X}\op\longrightarrow\Ct{InfSL}$ is elementary existential.
\end{exm}

Consider the 2-full 2-subcategory \EE of \EH whose objects are
elementary existential doctrines.
\begin{roll}
\item[The 1-arrows] are those pairs $(F,b)$ in \EH
such that $b$ preserves the
left adjoints along projections.
\end{roll}

Hence, by \ref{here}, the second functor of a 1-arrow in \EE
preserves  left adjoints along all arrows in $\ct{C}$.

\section{Quotients in an elementary doctrine}

The structure of elementary doctrine is suitable to describe the
notion of an equivalence relation and that of a quotient for such a
relation.

\begin{definition}\label{per}
Given an elementary doctrine
$P:\ct{C}\op\longrightarrow\Ct{InfSL}$, an object $A$ in
\ct{C} and an object $\rho$ in $P(A\times A)$, we say that 
$\rho$ is a \dfn{$P$-equivalence relation on $A$} if
it satisfies
\begin{rolld}
\item[\dfn{reflexivity}:] $\delta_A\leq\rho$
\item[\dfn{symmetry}:]
$\rho\leq P_{<\pr_2,\pr_1>}(\rho)$, for $\pr_1,\pr_2:A\times A\to A$
the first and second projection, respectively
\item[\dfn{transitivity}:]
$P_{<\pr_1,\pr_2>}(\rho)\Land P_{<\pr_2,\pr_3>}(\rho)\leq
P_{<\pr_1,\pr_3>}(\rho)$, for $\pr_1,\pr_2,\pr_3:A\times A\times A\to A$
the projections to the first, second and third factor,
respectively.
\end{rolld}
\end{definition}

\begin{exms}\label{exer}

\noindent(a)
Given an elementary doctrine
$P:\ct{C}\op\longrightarrow\Ct{InfSL}$ and an object $A$ in \ct{C},
the object $\delta_A$ is a $P$-equivalence relation on $A$.

\noindent(b)
Given a first order theory $\mathcal{T}$ with equality predicate,
consider the elementary doctrine 
$LT:\ct{V}\op\longrightarrow\Ct{InfSL}$ as in~\ref{ltae}. An
$LT$-equivalence relation is a
$\mathcal{T}$-provable equivalence relation.

\noindent(c)
For a category \ct{X} with products and pullbacks, consider the
elementary doctrine of subobjects
$S:\ct{X}\op\longrightarrow\Ct{InfSL}$ as in \ref{monoe}. An
$S$-equivalence relation is an equivalence relation in the category
\ct{X}.

\noindent(d)
For a cartesian category \ct{C} with weak pullbacks, consider the
elementary doctrine $\Psi:\ct{C}\op\longrightarrow\Ct{InfSL}$. A
$\Psi$-equivalence relation is a pseudo-equivalence relation in
\ct{C}, see \cite{CarboniA:freecl}.
\end{exms}

\begin{remark}\label{reme}
Let $P:\ct{C}\op\longrightarrow\Ct{InfSL}$ be
an elementary doctrine.
For an arrow $f:A\to B$ in \ct{C}, 
the functor $P_{f\times f}:P(B\times B)\to P(A\times A)$ takes a
$P$-equivalence relation $\sigma$ on $B$ to a $P$-equivalence relation
on $A$.
\end{remark}

\begin{definition}
Let $P:\ct{C}\op\longrightarrow\Ct{InfSL}$ be an elementary
doctrine. Let $\rho$ be a $P$-equivalence relation on $A$. 

A \dfn{quotient of $\rho$} is an arrow $q:A\to C$ in \ct{C} such that
$\rho\leq P_{q\times q}(\delta_C)$ 
and, for every arrow $g:A\to Z$ such that
$\rho\leq P_{g\times g}(\delta_Z)$, there is a unique arrow $h:C\to Z$
such that $g=h\circ q$. 
Such a quotient is \dfn{stable} when, for every arrow $f:C'\to C$ in
\ct{C}, there is a pullback 
$$\xymatrix{A'\ar[d]_{f'}\ar[r]^{q'}&C'\ar[d]^{f}\\A\ar[r]_q&C}$$
in \ct{C} and the arrow $q':A'\to C'$ is a quotient of 
the $P$-equivalence relation $P_{f'\times f'}(\rho)$.

Let $f:A\to B$ be an arrow in \ct{C}. The 
\dfn{$P$-kernel of $f:A\to B$}
is the $P$-equivalence relation $P_{f\times f}(\delta_B)$.
A quotient $q:A\to B$ of the $P$-equivalence relation $\rho$ is
\dfn{effective} if its $P$-kernel is $\rho$.
\end{definition}

\begin{exms}

\noindent(a)
Given an elementary doctrine
$P:\ct{C}\op\longrightarrow\Ct{InfSL}$ and an object $A$ in \ct{C},
a quotient of the $P$-equivalence relation $\delta_A$ on $A$ is the
identity arrow $\id{A}:A\to A$. It is trivially stable and effective
by definition.

\noindent(b)
In the elementary doctrine $S:\ct{X}\op\longrightarrow\Ct{InfSL}$ 
obtained from a category \ct{X} with products and pullbacks, a
quotient of the $S$-e\-quiv\-a\-lence relation
$\ec{\smash{\xymatrix@=2.3ex@1{r:R\ \ar@{>->}[r]&A\times A}}}$ is 
precisely a coequalizer of the pair of 
$$\xymatrix@C=5em{R\ar@<.5ex>[r]^(.45){\pr_1\circ r}
\ar@<-.5ex>[r]_(.45){\pr_2\circ r}&A}$$
---hence of any such pair obtained from the class
$\ec{\smash{\xymatrix@=2.3ex@1{r:R\ \ar@{>->}[r]&A\times A}}}$.
In particular, all $S$-equivalence relations have quotients which
are stable and effective if and only if the category \ct{C} is exact.
\end{exms}

\section{Set-like doctrines}

We intend to develop doctrines that may interpret constructive
theories for mathematics. We shall address two crucial properties that 
an elementary doctrine should verify in order to sustain such
interpretations. One relates to the axiom of comprehension and to
equality, the other to quotients.

\begin{definition}\label{compd}
Let $P:\ct{C}\op\longrightarrow\Ct{InfSL}$ be a primary
doctrine. Let $A$ be an object in \ct{C} and $\alpha$ an object
in $P(A)$.

A \dfn{comprehension of $\alpha$} is an arrow
$\cmp\alpha:X\to A$ in \ct{C} such that 
$\tt_{X}\leq P_{\cmp\alpha}(\alpha)$ and, 
for every arrow $g:Y\to A$ such that $\tt_{Y}\leq P_g(\alpha)$ there is
a unique $h:Y\to X$ such that $g=\cmp\alpha\circ h$.
Such a comprehension is \dfn{stable} when, for
every arrow $f:A'\to A$ in \ct{C}, $P_{f}(\alpha)$ has a
comprehension.

We say that $P:\ct{C}\op\longrightarrow\Ct{InfSL}$
\dfn{has comprehensions} if, for every object $A$ in
\ct{C}, every $\alpha$ in $P(A)$ has a comprehension.
\end{definition}

Again we refer the reader to \cite{LawvereF:equhcs}. 

The primary doctrine $S:\ct{S}\op\longrightarrow\Ct{InfSL}$ of
\ref{model} has comprehensions given by the trivial remark that a
subset determines an actual function by inclusion.

Among the examples listed in \ref{exer}, only example
(c), the doctrine of subobjects $S:\ct{X}\op\longrightarrow\Ct{InfSL}$ 
for \ct{X} a category with 
products and pullbacks, has comprehensions.

Example (d), the elementary doctrine
$\Psi:\ct{C}\op\longrightarrow\Ct{InfSL}$ constructed as in 
\ref{weaklhy} for a cartesian category 
\ct{C} with weak pullbacks, suggests to modify
the requirements in \ref{compd} by dropping uniqueness of the
mediating arrows. Before doing that, we note the following.

\begin{remark}\label{str}
For $f:A'\to A$ in \ct{C}, the mediating arrow $f'$ between the
comprehensions $\cmp\alpha:X\to A$ and $\cmp{P_{f}(\alpha)}:X'\to A'$
produces a pullback
$$\xymatrix@=3em{X'\ar[d]_{f'}\ar[r]^{\cmp{P_{f}(\alpha)}}&A'\ar[d]^{f}\\
X\ar[r]_{\cmp\alpha}&A.}$$
Hence a primary doctrine with comprehensions has comprehensions
stable under pullbacks.
\end{remark}

\begin{definition}\label{wcd}
Let $P:\ct{C}\op\longrightarrow\Ct{InfSL}$ be a primary
doctrine. Let $A$ be an object in \ct{C} and $\alpha$ an object
in $P(A)$.

A \dfn{weak comprehension of $\alpha$} is an arrow
$\cmp\alpha:X\to A$ in \ct{C} such that 
$\tt_{X}\leq P_{\cmp\alpha}(\alpha)$ and, 
for every arrow $g:Y\to A$ such that $\tt_{Y}\leq P_g(\alpha)$ there is
a (not necessarily unique) $h:Y\to X$ such that 
$g=\cmp\alpha\circ h$.\footnote{When necessary to distinguish between
the notions in definitions~\ref{compd} and \ref{wcd}, we shall refer
to one as in \ref{compd} with the further attribute \dfn{strict}.}

Such a comprehension is \dfn{stable}  when, for
every arrow $f:A'\to A$ in \ct{C}, $P_{f}(\alpha)$ has a weak
comprehension and there is a weak pullback
$$\xymatrix@=3em{X'\ar[d]_{f'}\ar[r]^{\cmp{P_{f}(\alpha)}}&A'\ar[d]^{f}\\
X\ar[r]_{\cmp\alpha}&A.}$$

We say that 
$P:\ct{C}\op\longrightarrow\Ct{InfSL}$,
\dfn{has weak comprehensions} if, for every object $A$ in
\ct{C}, every $\alpha$ in $P(A)$ has a weak comprehension.
\end{definition}

\begin{remark}\label{cmpm}
Suppose $\cmp\alpha:X\to A$ is a weak comprehension of $\alpha$.
The arrow $\cmp\alpha$ is monic if and only if it is a strict
comprehension.

Thus, when a diagonal is a weak comprehension, the arrow itself
satisfies the condition strictly (as in \ref{compd}). But some of its
reindexings may satisfy the weaker condition without uniqueness.
\end{remark}

\begin{exm}
For a cartesian category \ct{C} with weak pullbacks, 
the elementary doctrine $\Psi:\ct{C}\op\longrightarrow\Ct{InfSL}$ as
in \ref{weaklhy} has weak comprehensions.
\end{exm}

\begin{prop}\label{dswc}
Suppose $P:\ct{C}\op\longrightarrow\Ct{InfSL}$ is an elementary
doctrine. If the diagonal arrow $\Delta_A:A\to A\times A$ is a
stable $($weak$)$ comprehension of $\delta_A$, then for every pair of 
parallel arrows 
$\xymatrix@1{X\ar@<.5ex>[r]^f\ar@<-.5ex>[r]_g&A}$ in \ct{C}, the
$($weak$)$ comprehension of $P_{<f,g>}(\delta_A)$ is a $($weak$)$ 
equalizer of $f$ and $g$.
\end{prop}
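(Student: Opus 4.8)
The plan is to construct the (weak) equalizer of a pair $f,g:X\to A$ as the (weak) comprehension of the reindexing of $\delta_A$ along the pairing $<f,g>:X\to A\times A$. First I would set $m=<f,g>:X\to A\times A$ and apply the stability of the comprehension $\Delta_A$ of $\delta_A$ to the arrow $m$. Since the base object of that comprehension is $A\times A$ and we are reindexing along $m$, stability supplies a (weak) comprehension $e:=\cmp{P_m(\delta_A)}:E\to X$ of $P_m(\delta_A)$ in $P(X)$, together with a (weak) pullback
$$\xymatrix@=3em{E\ar[d]_{k}\ar[r]^{e}&X\ar[d]^{m}\\A\ar[r]_{\Delta_A}&A\times A}$$
for some $k:E\to A$. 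The commutativity $m\circ e=\Delta_A\circ k$ unwinds to $<f\circ e,g\circ e>\gel<k,k>$, whence $f\circ e=k=g\circ e$. Thus $e$ coequalizes $f$ and $g$ on the left and is the natural candidate for their (weak) equalizer.

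The heart of the argument is then the universal property, and here I would first record the identity $P_{\Delta_A}(\delta_A)=\tt_A$: it holds because $\delta_A=\D_{\Delta_A}(\tt_A)$, so the unit of the adjunction $\D_{\Delta_A}\dashv P_{\Delta_A}$ gives $\tt_A\leq P_{\Delta_A}(\D_{\Delta_A}(\tt_A))$, while $\tt_A$ is the top element of $P(A)$. Now let $t:T\to X$ satisfy $f\circ t=g\circ t$. Then $m\circ t=<f\circ t,f\circ t>=\Delta_A\circ(f\circ t)$, so that, using contravariant functoriality of $P$ and the fact that reindexings preserve the top element,
$$P_t(P_m(\delta_A))=P_{m\circ t}(\delta_A)=P_{f\circ t}(P_{\Delta_A}(\delta_A))=P_{f\circ t}(\tt_A)=\tt_T.$$
In particular $\tt_T\leq P_t(P_m(\delta_A))$, which is exactly the hypothesis needed to invoke the defining property of the comprehension $e=\cmp{P_m(\delta_A)}$: it yields an arrow $u:T\to E$ with $t=e\circ u$, and this $u$ is unique in the strict case. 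This is precisely the factorization property making $e$ a (weak) equalizer of $f$ and $g$.

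The step requiring the most care is the bookkeeping of the reindexings, namely checking that the stability hypothesis on $\Delta_A$ produces the comprehension of exactly $P_m(\delta_A)$ with its accompanying (weak) pullback, and that $P_{\Delta_A}(\delta_A)=\tt_A$ so that the factorization condition $\tt_T\leq P_t(P_m(\delta_A))$ is genuinely met. Once these two facts are in place, the equalizer property is a direct translation of the comprehension property of $P_m(\delta_A)$ back along $m=<f,g>$; in the strict case the uniqueness of the mediating arrow transfers verbatim, and in the weak case one simply drops uniqueness and reads off a weak equalizer.
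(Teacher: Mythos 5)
Your proposal is correct and follows essentially the same route as the paper: both obtain the (weak) equalizer of $f,g$ as the (weak) comprehension $\cmp{P_{<f,g>}(\delta_A)}:E\to X$ supplied by stability of $\Delta_A$ along $<f,g>$, together with the accompanying (weak) pullback square over $\Delta_A$. The only cosmetic difference lies in the final verification: the paper concludes abstractly from the fact that a (weak) pullback of $\Delta_A$---the equalizer of $\pr_1,\pr_2$---is a (weak) equalizer of the composites $f,g$, whereas you check the universal property directly via $P_{\Delta_A}(\delta_A)=\tt_A$ and the comprehension property of $P_{<f,g>}(\delta_A)$, which in fact uses only the commutativity of the square rather than its (weak) pullback property.
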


\begin{proof}
It follows immediately from the construction of a weak equalizer
of $\xymatrix@1{X\ar@<.5ex>[r]^f\ar@<-.5ex>[r]_g&A}$ as a
weak pullback of $\Delta_A$: in the diagram
$$\xymatrix@=3em{
E\ar[d]_{f\circ\cmp{P_{<f,g>}(\delta_A)}}\ar[r]^{\cmp{P_{<f,g>}(\delta_A)}}
&X\ar[d]_{<f,g>}\ar@<1ex>[rd]^f\ar@<.1ex>[rd]_g\\
A\ar[r]_(.45){\Delta_A}&
A\times A\ar@<.5ex>[r]^{\pr_1}\ar@<-.5ex>[r]_{\pr_2}&\strut A&}$$
the square is a weak pullback. Since the bottom horizontal arrow
is the equalizer of the parallel pair that follows it, the top
horizontal arrow is a weak equalizer.\end{proof}

We say that an elementary doctrine
$P:\ct{C}\op\longrightarrow\Ct{InfSL}$,
has \dfn{comprehensive $($weak\/$)$ equalizers} if, for
every object $A$ in \ct{C}, the diagonal $\Delta_A:A\to A\times A$ is
a stable (weak) comprehension of $\delta_A$.

\begin{definition}
Let $P:\ct{C}\op\longrightarrow\Ct{InfSL}$ be a primary
doctrine. Let $A$ be an object in \ct{C} and $\alpha$ an object
in $P(A)$.

A (weak) comprehension $\cmp\alpha:X\to A$ of $\alpha$ is \dfn{full}
if $\alpha\leq_A\beta$ whenever $\tt_X\leq_X P_{\cmp\alpha}(\beta)$
for $\beta$ in $P(A)$.
\end{definition}

Note that the notion of full (weak) comprehension ensures that
$\alpha\leq_A\beta$ is equivalent to 
$\tt_X\leq_X P_{\cmp\alpha}(\beta)$ for $\beta$ in $P(A)$.

In an elementary doctrine it follows directly from the definition of
$\delta_A$ that the diagonal arrow $\Delta_A:A\to A\times A$ is a full
comprehension if and only if it is the comprehension of $\delta_A$.

\begin{cor}\label{dm}
Suppose that $P:\ct{C}\op\longrightarrow\Ct{InfSL}$ is an elementary
doctrine with 
full comprehensions and comprehensive equalizers.
If $f:A\to B$ is monic, then $P_{f\times f}(\delta_B)=\delta_A$.
\end{cor}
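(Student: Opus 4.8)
The plan is to show that the two objects $\delta_A$ and $P_{f\times f}(\delta_B)$ of $P(A\times A)$ admit one and the same full comprehension, namely the diagonal $\Delta_A\colon A\to A\times A$, and then to invoke the fact that a full comprehension pins down its object uniquely. First I would record the identity $(f\times f)\circ\Delta_A=\Delta_B\circ f$ in $\cat{C}$ and observe that, precisely because $f$ is monic, the square
$$\xymatrix@=3em{A\ar[d]_f\ar[r]^{\Delta_A}&A\times A\ar[d]^{f\times f}\\B\ar[r]_{\Delta_B}&B\times B}$$
is a pullback: any cone $(u\colon T\to A\times A,\,v\colon T\to B)$ satisfies $f\circ\pr_1\circ u=f\circ\pr_2\circ u$ (reading off the two projections of $B\times B$), so $\pr_1\circ u=\pr_2\circ u$ by monicity, whence $u=\Delta_A\circ(\pr_1\circ u)$ and the mediating arrow is $\pr_1\circ u$. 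This is the only point at which the hypothesis that $f$ is monic is used.

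Next I would bring in the structural hypotheses. By comprehensive equalizers, $\Delta_A$ and $\Delta_B$ are the (strict) comprehensions of $\delta_A$ and $\delta_B$ respectively, and by the full-comprehensions hypothesis every comprehension is full. Applying Remark~\ref{str} to the arrow $f\times f\colon A\times A\to B\times B$ and the object $\delta_B$, whose comprehension is $\Delta_B$, exhibits the comprehension $\cmp{P_{f\times f}(\delta_B)}$ of the reindexed object as a pullback of $\Delta_B$ along $f\times f$. But by the previous paragraph $\Delta_A$ is already such a pullback of the same cospan; since pullbacks are unique up to a canonical isomorphism over $A\times A$, the diagonal $\Delta_A$ is, up to that iso, itself a comprehension of $P_{f\times f}(\delta_B)$, and it is full because all comprehensions are. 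Thus $\Delta_A$ is a full comprehension of both $\delta_A$ and $P_{f\times f}(\delta_B)$.

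Finally I would close the argument with the observation that a full comprehension determines its object: if $m\colon X\to A\times A$ is a full comprehension of both $\alpha$ and $\alpha'$, then the equivalence $\alpha\leq\gamma \iff \tt_X\leq P_m(\gamma) \iff \alpha'\leq\gamma$ (valid for full comprehensions, as noted after the definition of fullness) holds for every $\gamma$, so taking $\gamma=\alpha'$ and $\gamma=\alpha$ gives $\alpha=\alpha'$. Instantiating $m=\Delta_A$, $\alpha=\delta_A$ and $\alpha'=P_{f\times f}(\delta_B)$ yields $P_{f\times f}(\delta_B)=\delta_A$, as required. I expect the only delicate step to be the identification of $\cmp{P_{f\times f}(\delta_B)}$ with $\Delta_A$ by matching the two pullback squares; everything else is formal manipulation with the comprehension universal property and with the adjunction defining $\delta$.
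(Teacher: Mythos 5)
Your proof is correct and takes essentially the same route as the paper's: both arguments identify the comprehension of $P_{f\times f}(\delta_B)$ with the diagonal $\Delta_A$ using monicity of $f$ (the paper phrases this via the kernel of $f$ and \ref{dswc}, you via \ref{str} and the pullback of $\Delta_B$ along $f\times f$, which is the same square), and then conclude by fullness of comprehensions. Your closing observation that a full comprehension determines its object up to equality in the fiber is precisely the paper's appeal to fullness, so no gap remains.
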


\begin{proof}
It follows by fullness of comprehensions after noting that
the comprehension of each side
is the kernel of $f$, which in turn follows 
from~\ref{dswc} and the fact that the kernel  of a monic
is the diagonal.\end{proof}

The next lemma will be needed in section \ref{appl}.

\begin{lemma}\label{wfs}
Let $P:\ct{C}\op\longrightarrow\Ct{InfSL}$ be a primary doctrine
with full weak comprehensions. Suppose also that, for a given $\alpha$
in $P(A)$, a weak comprehension $\cmp\alpha:X\to A$ is such that the
functor $P_{\cmp\alpha}:P(A)\to P(X)$ has a right adjoint 
$\B_{\cmp\alpha}:P(X)\to P(A)$. Then $\alpha\Land\blank:P(A)\to P(A)$ 
has a right adjoint $\alpha\Implies\blank:P(A)\to P(A)$.
\end{lemma}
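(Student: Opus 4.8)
The plan is to exhibit the right adjoint explicitly as the composite $\alpha\Implies\beta:=\B_{\cmp\alpha}(P_{\cmp\alpha}(\beta))$, which is manifestly a monotone map $P(A)\to P(A)$, and then to verify the adjunction $\alpha\Land\gamma\leq\beta\Leftrightarrow\gamma\leq\alpha\Implies\beta$. Since $P_{\cmp\alpha}\dashv\B_{\cmp\alpha}$ by hypothesis, the inequality $\gamma\leq\B_{\cmp\alpha}(P_{\cmp\alpha}(\beta))$ is equivalent to $P_{\cmp\alpha}(\gamma)\leq P_{\cmp\alpha}(\beta)$ in $P(X)$. Thus everything reduces to proving the single equivalence between $\alpha\Land\gamma\leq\beta$ (in $P(A)$) and $P_{\cmp\alpha}(\gamma)\leq P_{\cmp\alpha}(\beta)$ (in $P(X)$).

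The first preliminary I would record is that $P_{\cmp\alpha}(\alpha)=\tt_X$: the comprehension gives $\tt_X\leq P_{\cmp\alpha}(\alpha)$, and the reverse inequality is automatic. Because $P_{\cmp\alpha}$ preserves finite infs, this yields $P_{\cmp\alpha}(\alpha\Land\gamma)=P_{\cmp\alpha}(\gamma)$. The forward half of the equivalence is then immediate: applying the monotone map $P_{\cmp\alpha}$ to $\alpha\Land\gamma\leq\beta$ gives $P_{\cmp\alpha}(\gamma)=P_{\cmp\alpha}(\alpha\Land\gamma)\leq P_{\cmp\alpha}(\beta)$.

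The hard part is the converse: from $P_{\cmp\alpha}(\gamma)\leq P_{\cmp\alpha}(\beta)$ deduce $\alpha\Land\gamma\leq\beta$. Fullness of $\cmp\alpha$ alone is not enough, since it only governs inequalities of the shape $\alpha\leq(\blank)$, not $\alpha\Land\gamma\leq(\blank)$; nor can one simply transpose the hypothesis across an implication in $P(X)$, as the fibres are not assumed to be Heyting. The device I would use is to bring in a full weak comprehension $\cmp{\alpha\Land\gamma}:Z\to A$ of the meet $\alpha\Land\gamma$ (available because the doctrine has full weak comprehensions) and to reduce to its fullness. From $\tt_Z\leq P_{\cmp{\alpha\Land\gamma}}(\alpha\Land\gamma)$ one gets, by meeting below, that $\tt_Z$ lies below both $P_{\cmp{\alpha\Land\gamma}}(\alpha)$ and $P_{\cmp{\alpha\Land\gamma}}(\gamma)$. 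Applying the weak universal property of the comprehension $\cmp\alpha$ of $\alpha$ to the arrow $\cmp{\alpha\Land\gamma}$, which satisfies $\tt_Z\leq P_{\cmp{\alpha\Land\gamma}}(\alpha)$, produces a (not necessarily unique) arrow $h:Z\to X$ with $\cmp{\alpha\Land\gamma}=\cmp\alpha\circ h$, whence $P_{\cmp{\alpha\Land\gamma}}=P_h\circ P_{\cmp\alpha}$.

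Then I would chain the inequalities
$$\tt_Z\leq P_{\cmp{\alpha\Land\gamma}}(\gamma)=P_h(P_{\cmp\alpha}(\gamma))\leq P_h(P_{\cmp\alpha}(\beta))=P_{\cmp{\alpha\Land\gamma}}(\beta),$$
using the hypothesis $P_{\cmp\alpha}(\gamma)\leq P_{\cmp\alpha}(\beta)$ and monotonicity of $P_h$. Finally, fullness of the comprehension $\cmp{\alpha\Land\gamma}$ turns $\tt_Z\leq P_{\cmp{\alpha\Land\gamma}}(\beta)$ into $\alpha\Land\gamma\leq\beta$, completing the converse and hence the adjunction. The crux is precisely this passage from a fibrewise inequality over $X$ to one over $A$: it is fullness of the comprehension of $\alpha\Land\gamma$—rather than of $\alpha$—together with the factorisation through $\cmp\alpha$ that makes it go through.
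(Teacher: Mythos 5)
Your proof is correct and follows essentially the same route as the paper's: the same candidate $\B_{\cmp\alpha}(P_{\cmp\alpha}(\beta))$, the same reduction via the adjunction $P_{\cmp\alpha}\dashv\B_{\cmp\alpha}$, and the same key device of factoring a weak comprehension $\cmp{\alpha\Land\gamma}:Z\to A$ through $\cmp\alpha$ and then invoking fullness of $\cmp{\alpha\Land\gamma}$ to descend from $\tt_Z\leq P_{\cmp{\alpha\Land\gamma}}(\beta)$ to $\alpha\Land\gamma\leq\beta$. You merely make explicit two points the paper leaves implicit, namely that $P_{\cmp\alpha}(\alpha)=\tt_X$ and why the factorisation through $\cmp\alpha$ exists.
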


\begin{proof}
Consider 
$\alpha\Implies\beta\colon=\B_{\cmp\alpha}(P_{\cmp\alpha}(\beta))$. To
see that  $\gamma \leq \B_{\cmp\alpha}(P_{\cmp\alpha}(\beta))$
if and only if
$\alpha\Land\gamma\leq\beta$ proceed as follows. 
If $\gamma \leq \B_{\cmp\alpha}(P_{\cmp\alpha}(\beta))$
then
$P_{\cmp\alpha}(\gamma)\leq P_{\cmp\alpha}(\beta)$ and, since a weak
comprehension $\cmp{\alpha\Land\gamma}:Z\to A$ of 
$\alpha\Land\gamma$ factors through $\cmp\alpha:X\to A$, then 
$$
\tt_Z\leq P_{\cmp{\alpha\Land\gamma}}(\alpha\Land\gamma)\leq
P_{\cmp{\alpha\Land\gamma}}(\gamma)\leq P_{\cmp{\alpha\Land\gamma}}(\beta)
$$
and, finally full comprehension yields that
$\alpha\Land\gamma\leq\beta$. Next, 
if $\alpha\Land\gamma\leq\beta$, then
$$
P_{\cmp\alpha}(\gamma)\leq\tt_X\Land P_{\cmp\alpha}(\gamma)\leq
P_{\cmp\alpha}(\alpha)\Land P_{\cmp\alpha}(\gamma)\leq
P_{\cmp\alpha}(\alpha\Land\gamma)\leq
P_{\cmp\alpha}(\beta).\eqno{\qedhere}
$$
\end{proof}

Consider the 2-full 2-subcategory \CH of \EH whose objects are
elementary doctrines with full
comprehensions
and  comprehensive equalizers.
\begin{roll}
\item[The 1-arrows] are those pairs $(F,b)$ in \EH
such that $F$ preserves comprehensions.
\end{roll}

\begin{remark}
The functor $F$ in a pair $(F,b)$ in \CH preserves all finite limits.
\end{remark}

The other aspect that we shall consider about set-like doctrines
is that every quotient should be of effective descent. We recall the
notion of descent data for a $P$-equivalence relation:

\begin{definition}
Given an elementary doctrine
$P:\ct{C}\op\longrightarrow\Ct{InfSL}$ and 
a $P$-e\-quiv\-a\-lence relation $\rho$ on an object $A$ in \ct{C},
the partial order of descent data \des{\rho} is the sub-order of
$P(A)$ on those $\alpha$ such that
$$P_{\pr_1}(\alpha)\Land_{A\times A}\rho\leq P_{\pr_2}(\alpha),$$
where $\pr_1,\pr_2:A\times A\to A$ are the projections.
\end{definition}

\begin{remark}
Given an elementary doctrine
$P:\ct{C}\op\longrightarrow\Ct{InfSL}$,
for $f:A\to B$ in \ct{C}, let $\rho$ be the $P$-kernel 
${P_{f\times f}(\delta_B)}$.
The functor $P_f:P(B)\to P(A)$ takes values in 
$\des\rho\subseteq P(A)$.
\end{remark}

\begin{definition}\label{efdes}
Given an elementary doctrine
$P:\ct{C}\op\longrightarrow\Ct{InfSL}$ and 
an arrow $f:A\to B$ in \ct{C}, let $\rho$ be the $P$-kernel 
${P_{f\times f}(\delta_B)}$. 
The arrow $f$ is \dfn{of effective descent} if the functor
$P_f:P(B)\to\des\rho$ is an isomorphism.
\end{definition}

\begin{exm}
In the example of the doctrine $S:\ct{S}\op\longrightarrow\Ct{InfSL}$
on the category of sets and functions, as in \ref{monoe}, every
canonical surjection $f:A\to A/\sim$, in the quotient of an
equivalence relation $\sim$ on $A$,
is of effective descent. The condition in \ref{efdes} recognizes the
fact that the subsets of the $A/\sim$ are in bijection with those
subsets $U$ of $A$ that are \dfn{closed with respect to} the
equivalence relation, in the sense that, for $a_1,a_2\in A$ such that
$a_1\sim a_2$ and $a_2\in U$, one has also that $a_1\in U$.
\end{exm}

Consider the 2-full 2-subcategory \QH of \CH whose objects are
elementary doctrines $P:\ct{C}\op\longrightarrow\Ct{InfSL}$ in  \CH
with stable effective quotients of $P$-equivalence
relations and of effective descent.
\begin{roll}
\item[The 1-arrows] are those pairs $(F,b)$ in \EH
such that $F$ preserves quotients and comprehensions.
\end{roll}

\begin{prop}\label{regq}
If $P:\ct{C}\op\longrightarrow\Ct{InfSL}$ is an elementary
doctrine in \QH, then the category \ct{C} is regular. Moreover,
if  $P:\ct{C}\op\longrightarrow\Ct{InfSL}$ is also existential,
every equivalence relation in \ct{C} has a stable coequalizer.
\end{prop}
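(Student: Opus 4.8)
The plan is to show that $\cat{C}$ is regular by exhibiting, for every arrow $f:A\to B$ in $\cat{C}$, a factorization as a regular epi followed by a mono, with the regular epi stable under pullback. The natural candidate for the regular epi is the quotient of the $P$-kernel of $f$. So first I would form $\rho\colon=P_{f\times f}(\delta_B)$, which by Remark~\ref{reme} (applied to $\delta_B$, itself a $P$-equivalence relation by \ref{exer}(a)) is a $P$-equivalence relation on $A$. Since $P$ lies in \QH, this $\rho$ has a stable effective quotient $q:A\to C$. The condition $\rho\leq P_{q\times q}(\delta_C)$ together with the fact that $f$ also satisfies $\rho\leq P_{f\times f}(\delta_B)$ (trivially, as $\rho$ \emph{is} the $f$-kernel) yields, by the universal property of the quotient, a unique $m:C\to B$ with $f=m\circ q$. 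This is the desired factorization; $q$ is a stable effective quotient, hence in particular a stable regular epi (its being a coequalizer of the relevant pair is exactly what the effective-quotient condition delivers in the subobject reading, cf.\ the second Example after \ref{efdes}).

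The heart of the argument, and the step I expect to be the main obstacle, is proving that the mediating arrow $m:C\to B$ is monic. Here I would use that quotients in \QH are \emph{effective} and \emph{of effective descent}. Effectiveness gives $P_{q\times q}(\delta_C)=\rho=P_{f\times f}(\delta_B)=P_{q\times q}(P_{m\times m}(\delta_B))$, so $\delta_C$ and $P_{m\times m}(\delta_B)$ agree after reindexing along $q\times q$. To upgrade this to an actual equality $\delta_C=P_{m\times m}(\delta_B)$ in $P(C\times C)$, I would invoke effective descent: the arrow $q\times q$ is (a pullback-stable composite related to) a quotient map, so $P_{q\times q}$ is injective on the relevant descent data, forcing $P_{m\times m}(\delta_B)=\delta_C$. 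Once this identity holds, Corollary~\ref{dm}'s circle of ideas applies in reverse: the $P$-kernel of $m$ is $\delta_C$, which says precisely that $m$ equalizes its own kernel pair trivially, i.e.\ the kernel pair of $m$ is the diagonal, and by comprehensive equalizers (\ref{dswc}) this means $m$ is monic. I would need to check carefully that the effective-descent hypothesis is exactly strong enough to transport the equality across $q\times q$ rather than merely across $q$; this pairing is the delicate point.

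For stability, I would rely directly on the \emph{stable} part of the effective-quotient hypothesis: given any $g:C'\to C$, pulling back $q$ along $g$ yields a quotient $q'$ of $P_{g'\times g'}(\rho)$, so regular epis are stable under pullback, which is the remaining clause in the definition of a regular category together with the existence of finite limits (guaranteed since \CH-objects have comprehensive equalizers and $\cat{C}$ has finite products, hence all finite limits by the Remark following \CH).

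For the final assertion, assume in addition that $P$ is existential. Given an equivalence relation in $\cat{C}$, presented as a mono $r:R\rightarrowtail A\times A$, I would first convert it into a $P$-equivalence relation on $A$: using full comprehensions one reads off from $r$ an object $\rho_r\colon=\D_{r}(\tt_R)$ in $P(A\times A)$ (the existential image of the top element along $r$, available since $P$ is existential), and checks that the reflexivity, symmetry and transitivity diagrams satisfied by $r$ in $\cat{C}$ translate into the three inequalities of Definition~\ref{per}; here full comprehension is what lets me move between the arrow $r$ and the predicate $\rho_r$ faithfully. Then \QH provides a stable effective quotient $q:A\to C$ of $\rho_r$, and I would verify that this $q$ is a coequalizer in $\cat{C}$ of the pair $\pr_1\circ r,\pr_2\circ r:R\rightrightarrows A$. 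The coequalizer property follows because a cocone out of that parallel pair is exactly an arrow $g:A\to Z$ with $\rho_r\leq P_{g\times g}(\delta_Z)$ (using fullness to pass between the external equalizing condition and the internal $P$-statement), which is the data the universal property of the quotient consumes. Stability of the coequalizer is then inherited verbatim from stability of the quotient. The one technical check I would dwell on is the faithful passage, via full comprehension and the existential image, between the $\cat{C}$-level equivalence relation $r$ and the $P$-level relation $\rho_r$, since everything downstream depends on this dictionary being tight.
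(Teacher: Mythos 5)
Your proposal is correct, but for the first claim it takes a genuinely different route from the paper. The paper builds no (regular epi, mono) factorization at all: it verifies the regularity axioms directly, showing that the kernel pair of $f:A\to B$ is computed via \ref{dswc} as the comprehension $\cmp{\rho}:K\to A\times A$ of $\rho=P_{f\times f}(\delta_B)$, and that the quotient $q:A\to C$ of $\rho$ is the coequalizer of the pair $\pr_1\circ\cmp{\rho},\pr_2\circ\cmp{\rho}:K\to A$, the dictionary between ``$g$ coequalizes this pair'' and ``$\rho\leq P_{g\times g}(\delta_Z)$'' being supplied by fullness of the comprehension together with comprehensive equalizers; stability of quotients and of comprehensions then yields pullback-stable coequalizers of kernel pairs, which is all that regularity requires. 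Notably, the paper's argument uses neither effectiveness nor effective descent, whereas your factorization route needs both; what each approach buys is that the paper's is leaner on hypotheses, while yours makes the image factorization explicit. Your flagged delicate point does go through: write $q\times q=(q\times\id{C})\circ(\id{A}\times q)$ and observe that each factor is a pullback of $q$ along a projection, hence by stability a quotient of a $P$-equivalence relation and so of effective descent; thus $P_{q\times q}$ is injective, $P_{m\times m}(\delta_B)=\delta_C$, and $m$ is monic exactly as you argue. Two small repairs: the justification that $q$ is a coequalizer is not the subobject example following \ref{efdes} (that computation is special to subobject doctrines) but the fullness argument above; and in the second part the passage from $g\circ\pr_1\circ r=g\circ\pr_2\circ r$ to $\tt_R\leq P_{(g\times g)\circ r}(\delta_Z)$ uses the universal property of $\Delta_Z$ as the comprehension of $\delta_Z$ (comprehensive equalizers), after which the adjunction $\D_r\dashv P_r$ gives $\D_r(\tt_R)\leq P_{g\times g}(\delta_Z)$ --- fullness is not what is needed there. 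Your treatment of the second claim otherwise coincides with the paper's, which likewise forms $\D_r(\tt_R)$ and exploits that the mediating arrow $R\to\cmp{\D_r(\tt_R)}$ is monic.
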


\begin{proof}
The category \ct{C} has pullbacks and these can be computed by means
of comprehensions: in the case of interest, given an arrow $f:A\to B$,
the comprehension 
$\cmp{\rho}\equiv\cmp{P_{f\times f}(\delta_B)}:K\to A\times A$
of the $P$-kernel of $f$ gives a pullback
$$\xymatrix@=4em{
K\ar[r]^{\pr_1\circ\cmp\rho}\ar[d]_{\pr_2\circ\cmp\rho}&A\ar[d]^f\\
A\ar[r]_f&B}$$
in \ct{C} by \ref{dswc}. Moreover, thanks to fullness of the comprehension of $\rho$,
the quotient $q:A\to C$ of $\rho$ provides a
coequalizer 
$$\xymatrix@=4em{K\ar@<.5ex>[r]^{\pr_1\circ\cmp\rho}
\ar@<-.5ex>[r]_{\pr_2\circ\cmp\rho}&A\ar[r]^q&C}$$
which is stable thanks to the stability of quotients and
comprehensions in $P$.

\noindent
For the second part, suppose that the doctrine $P$ is existential.
An equivalence relation 
$\xymatrix@1{r:R\ \ar@{>->}[r]&A\times A}$ in \ct{C}
determines the $P$-equivalence relation $\D_{r}(\tt_R)$ on $A$. To
conclude note
that the mediating arrow $R\to\cmp{\D_{r}(\tt_R)}$ is
monic.\end{proof}

The following result is a direct consequence of those accomplished in
\cite{HughesJ:facsft}, see also \cite{PavlovicD:mapsii}.

\begin{prop}\label{exq}
Let $P:\ct{C}\op\longrightarrow\Ct{InfSL}$ be a primary doctrine.
$P$ is an existential elementary
doctrine in \QH where every
monomorphism in \ct{C} is a comprehension if
and only if \ct{C} is exact and $P$ is equivalent to
the doctrine $S:\ct{C}\op\longrightarrow\Ct{InfSL}$ of
subobjects.
\end{prop}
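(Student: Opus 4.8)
The plan is to prove the two implications separately. Throughout I will use that an equivalence of primary doctrines over a fixed base automatically carries all the extra structure along with it, since finite meets, the elementary and existential adjoints, comprehensions and quotients are each determined by a universal property, and universal constructions are preserved by equivalences.

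For the implication from right to left it suffices, since the listed properties are invariant under equivalence, to verify them directly for the subobject doctrine $S$ on an exact $\cat{C}$. That $S$ is elementary is Example~\ref{monoe}, and that it is existential when $\cat{C}$ is regular—hence when it is exact—is recorded in the example following Remark~\ref{here}. In $S$ the comprehension of a subobject is the subobject itself, so comprehensions are strict, full, and every monomorphism is a comprehension; the diagonal $\Delta_A$ is the comprehension of $\delta_A$ and is stable because monomorphisms are pullback-stable, giving comprehensive equalizers. Finally, recall that a quotient of an $S$-equivalence relation is a coequalizer of the associated parallel pair and that all such quotients are stable and effective exactly when $\cat{C}$ is exact; one then checks they are of effective descent by identifying the descent data for an equivalence relation with the subobjects of $A$ closed under the relation, i.e.\ with the subobjects of the quotient.

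For the converse I would first invoke Proposition~\ref{regq}: as $P$ is an existential elementary doctrine in $\QH$, the base $\cat{C}$ is regular and every equivalence relation has a stable coequalizer, so it remains only to prove effectiveness. Let $r\colon R\rightarrowtail A\times A$ present an equivalence relation in $\cat{C}$ and set $\rho=\D_r(\tt_R)$, the $P$-equivalence relation used in the proof of Proposition~\ref{regq}, with quotient $q\colon A\to C$. Since quotients in $\QH$ are effective the $P$-kernel $P_{q\times q}(\delta_C)$ equals $\rho$, and by Proposition~\ref{dswc} its comprehension $\cmp\rho$ is the kernel pair of $q$. The crucial point is that $\cmp\rho\cong R$: being monic, $r$ is by hypothesis the comprehension of some $\mu$ in $P(A\times A)$, and then for every $\beta$ the adjunction $\D_r\dashv P_r$ together with fullness of $r=\cmp\mu$ give that $\D_r(\tt_R)\leq\beta$ iff $\tt_R\leq P_r(\beta)$ iff $\mu\leq\beta$; hence $\rho=\mu$ and $\cmp\rho\cong\cmp\mu\cong R$. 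Thus $R$ is the kernel pair of its own coequalizer and $\cat{C}$ is exact. To produce $P\simeq S$ over the identity on $\cat{C}$, send $\alpha$ in $P(A)$ to $\ec{\cmp\alpha}$, which is monic since strict comprehensions are monic (Remark~\ref{cmpm}). Fullness makes this assignment order-preserving and order-reflecting, because $\cmp\alpha$ factors through $\cmp\beta$ iff $\tt\leq P_{\cmp\alpha}(\beta)$ iff $\alpha\leq\beta$, while the hypothesis that every monomorphism is a comprehension makes it essentially surjective; so each $b_A$ is an isomorphism of posets, natural in $A$ by Remark~\ref{str}, which identifies $\cmp{P_f(\alpha)}$ with the pullback of $\cmp\alpha$ along $f$.

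I expect the main obstacle to be the effectiveness argument: one must combine three distinct hypotheses of $\QH$—effective quotients, full comprehensions, and the identification of monomorphisms with comprehensions—to force the comprehension of the $P$-kernel to coincide with the given relation $R$, rather than merely to receive it by a monomorphism as in Proposition~\ref{regq}. Once this is in hand, verifying that the natural isomorphism $b$ respects the existential structure (matching $\D_f$ with image factorisation on the $S$ side) is routine, since an equivalence of the underlying primary doctrines transports the adjoints uniquely.
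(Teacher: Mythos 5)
Your proof is correct, but it takes a genuinely different route from the paper, because the paper gives no in-text argument at all: it states only that the proposition ``is a direct consequence'' of the results of Hughes and Jacobs on factorization systems and fibrations, see also Pavlovi\'c. That route goes through the correspondence, already invoked in the example following Remark~\ref{here}, between existential doctrines of subobject type and stable proper factorization systems $(\mathcal{E},\mathcal{M})$ on the base: comprehensions constitute the $\mathcal{M}$-part, the hypothesis that every monomorphism is a comprehension forces $\mathcal{M}$ to consist of all monos, so $P$ is the subobject doctrine of a regular category, and the quotient axioms of \QH then translate into exactness. What you do instead is a self-contained verification inside the paper's own apparatus: you reuse \ref{regq} for regularity and stable coequalizers, and your key step---which appears nowhere in the paper---is the observation that a mono $r\colon R\rightarrowtail A\times A$, being by hypothesis a (necessarily strict and full) comprehension of some $\mu$, satisfies $\D_r(\tt_R)=\mu$ by comparing the adjunction $\D_r\dashv P_r$ with fullness, so that $\cmp{\D_r(\tt_R)}\cong R$; combined with effectiveness of quotients in \QH and the kernel-pair computation from \ref{dswc} used in the proof of \ref{regq}, this upgrades the merely monic comparison arrow $R\to\cmp{\D_r(\tt_R)}$ of \ref{regq} to an isomorphism and gives effectiveness of equivalence relations in \cat{C}. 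Your construction of the equivalence via $\alpha\mapsto\ec{\cmp\alpha}$ is likewise sound: it is monic by Remark~\ref{cmpm}, an order-isomorphism by fullness plus the hypothesis on monos, and natural by Remark~\ref{str}. The trade-off is clear: the paper's citation buys brevity and situates the result in a general fibrational factorization theory, while your argument buys self-containedness and makes explicit where each hypothesis of \QH---full comprehensions, effective quotients, all monos being comprehensions---is actually used.
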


\section{Completing with quotients as a free construction}

There is a fairly obvious construction that produces an elementary
doctrine with quotients. We shall present it in the following and
prove that it satisfies a universal property.

\start{Let
$P:\ct{C}\op\longrightarrow\Ct{InfSL}$ be an elementary 
doctrine} for the rest of the section.
Consider the category $\ct{Q}_P$ 
of ``quotients in $P$'', 
the \dfn{elementary quotient completion of $P$}, defined as follows:
\begin{roll}
\item[an object of $\ct{Q}_P$] is a pair $(A,\rho)$ such that $\rho$
is a $P$-equivalence relation on $A$
\item[an arrow {$\ec{f}:(A,\rho)\to(B,\sigma)$}] is an equivalence class
of arrows $f:A\to B$ in \ct{C} (with a chosen representative) such that 
$\rho\leq_{A\times A}P_{f\times f}(\sigma)$ in $P(A\times A)$ with
respect to the relation determined by the condition that 
$\rho\leq_{A\times A}P_{f\times g}(\sigma)$
\end{roll}
Composition is given by that of \ct{C} on representatives, and
identities are represented by identities of \ct{C}.

The indexed partial inf-semilattice
$\Q{P}:\ct{Q}_P\op\longrightarrow\Ct{InfSL}$
on $\ct{Q}_P$ will be given by categories of descent data: 
on an object $(A,\rho)$ it is defined as
$$
\Q{P}(A,\rho)\colon=\des{\rho}
$$
and the following lemma is instrumental to give the assignment on
arrows by using the action of $P$ on (any)
representatives---in the sense that the action of \Q{P} on
arrows will then be defined as
$\Q{P}_{\ec{f}}\colon=P_f$ for $\ec{f}:(A,\rho)\to(B,\sigma)$.

\begin{lemma}\thmitem
With the notation used above, let $(A,\rho)$ and $(B,\sigma)$ be
objects in $\ct{Q}_P$, and let $\beta$ be an object in
\des{\sigma}.
\begin{enumerate}
\item If $f:A\to B$ is an arrow in \ct{C} such that
$\rho\leq_{A\times A}P_{f\times f}(\sigma)$, then $P_f(\beta)$
is in \des{\rho}. 
\item If $f,g:A\to B$ are arrows in \ct{C} such that 
$\rho\leq_{A\times A}P_{f\times g}(\sigma)$, then 
$$P_f(\beta)\gel P_g(\beta).$$
\end{enumerate}
\end{lemma}

\begin{proof}
(i) is immediate.\newline
(ii) Since $\beta$ is in \des{\sigma}, one has that
$$P_{\pr_1'}(\beta)\Land\sigma\leq_{B\times B}P_{\pr_2'}(\beta)$$
where $\pr_1',\pr_2':B\times B\to B$ are the two projections.
Hence
$$P_{f\times g}(P_{\pr_1'}(\beta))\Land
P_{f\times g}(\sigma)\leq_{A\times A}P_{f\times g}(P_{\pr_2'}(\beta))$$
since $P_{f\times g}$ preserves the structure. By the
hypothesis that $\rho\leq_{A\times A}P_{f\times g}(\sigma)$,
$$P_{f\circ\pr_1}(\beta)\Land\rho\leq_{A\times A}P_{g\circ\pr_2}(\beta)$$
where $\pr_1,\pr_2:A\times A\to A$ are the two projections. Taking
$P_{\Delta_A}$ of both sides and recalling reflexivity of $\rho$
$$P_f(\beta)\gel P_f(\beta)\Land\tt_A\gel
P_{\Delta_A}(P_{f\circ\pr_1}(\beta))\Land P_{\Delta_A}(\rho)\leq
P_{\Delta_A}(P_{g\circ\pr_2}(\beta))\gel P_g(\beta).$$
The other direction follows by symmetry.\end{proof}

\begin{lemma}
With the notation used above,
$\Q{P}:\ct{Q}_P\op\longrightarrow\Ct{InfSL}$ is a primary
doctrine.
\end{lemma}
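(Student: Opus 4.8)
The plan is to check the three requirements of Definition~\ref{ph} for $\Q{P}$: that the base $\cat{Q}_P$ has finite products, that $\Q{P}$ is a well-defined contravariant functor into $\Cat{Pos}$, and that each fibre $\des\rho$ carries finite infs which reindexing preserves. The key point underlying everything---that the action on arrows does not depend on the chosen representative---is exactly the content of the preceding lemma, so most of the work is routine bookkeeping with the fact that $P$ preserves finite meets.

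First I would define the action on an arrow $\ec f:(A,\rho)\to(B,\sigma)$ by $\Q{P}_{\ec f}(\beta)\colon=P_f(\beta)$ for $\beta$ in $\des\sigma$, and check two things. That the value is independent of the representative $f$ is the preceding lemma, since $f$ and $g$ represent the same arrow exactly when $\rho\leq_{A\times A}P_{f\times g}(\sigma)$. That $P_f(\beta)$ again lies in $\des\rho$ I would get by applying the meet-preserving map $P_{f\times f}$ to the descent inequality $P_{\pr_1}(\beta)\Land\sigma\leq P_{\pr_2}(\beta)$ for $\beta$; using $\pr_i\circ(f\times f)=f\circ\pr_i$ this rewrites as $P_{\pr_1}(P_f\beta)\Land P_{f\times f}(\sigma)\leq P_{\pr_2}(P_f\beta)$, and since $\ec f$ is an arrow we have $\rho\leq P_{f\times f}(\sigma)$, whence $P_{\pr_1}(P_f\beta)\Land\rho\leq P_{\pr_2}(P_f\beta)$. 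Contravariant functoriality and preservation of identities are then immediate from $P_{g\circ f}=P_f\circ P_g$ and $P_{\id{A}}=\mathrm{id}$.

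For the fibrewise structure I would show each $\des\rho$ is an inf-subsemilattice of $P(A)$: the top $\tt_A$ is descent data because $P_{\pr_i}$ preserves $\tt$, and $\des\rho$ is closed under the binary meet of $P(A)$ because $P_{\pr_1}$ preserves meets and $\rho=\rho\Land\rho$, so the two descent inequalities for $\alpha,\alpha'$ combine into one for $\alpha\Land\alpha'$. Hence infs in $\des\rho$ are computed as in $P(A)$, and consequently $\Q{P}_{\ec f}$, being the restriction of the meet-preserving map $P_f$, preserves them.

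The part that needs genuine care is finite products in $\cat{Q}_P$. For the terminal object I would take $(T,\tt_{T\times T})$ with $T$ terminal in $\cat C$; it is a $P$-equivalence relation and receives a unique arrow from each $(A,\rho)$ since $P_{!\times!}(\tt_{T\times T})=\tt$ dominates $\rho$. For the product of $(A,\rho)$ and $(B,\sigma)$ I would equip $A\times B$ with $P_{p_A}(\rho)\Land P_{p_B}(\sigma)$, where $p_A,p_B:(A\times B)\times(A\times B)\to A\times A,\,B\times B$ are the evident double projections, and verify it is a $P$-equivalence relation---reflexivity reduces to $\delta_{A\times B}\leq P_{p_A}(\delta_A)$, which follows from the adjunction $\D_{\Delta}\dashv P_{\Delta}$ together with $p_A\circ\Delta_{A\times B}=\Delta_A\circ\pr_1$ and the unit $\tt_A\leq P_{\Delta_A}(\delta_A)$, while symmetry and transitivity are inherited componentwise. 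The $\cat C$-projections then represent arrows in $\cat{Q}_P$ because $\pr_1\times\pr_1=p_A$, and a cone $\ec g,\ec h$ from $(C,\tau)$ is mediated by $\ec{\langle g,h\rangle}$ since $p_A\circ(\langle g,h\rangle\times\langle g,h\rangle)=g\times g$ and likewise on the $B$-side. The one delicate verification is uniqueness \emph{as an equivalence class}: for any $\ec k$ with the same projections the identities $p_A\circ(k\times\langle g,h\rangle)=(\pr_1\circ k)\times g$ and $p_B\circ(k\times\langle g,h\rangle)=(\pr_2\circ k)\times h$ turn the two ``cross'' hypotheses $\tau\leq P_{(\pr_1\circ k)\times g}(\rho)$ and $\tau\leq P_{(\pr_2\circ k)\times h}(\sigma)$, supplied by $\ec{\pr_1}\circ\ec k=\ec g$ and $\ec{\pr_2}\circ\ec k=\ec h$, precisely into $\tau\leq P_{k\times\langle g,h\rangle}(P_{p_A}(\rho)\Land P_{p_B}(\sigma))$, giving $\ec k=\ec{\langle g,h\rangle}$. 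I expect this last projection bookkeeping to be the main obstacle; everything else is a direct consequence of the preceding lemma and of $P$ preserving finite meets.
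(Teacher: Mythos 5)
Your proof is correct and takes essentially the same route as the paper: your product relation $P_{p_A}(\rho)\Land P_{p_B}(\sigma)$ is literally the paper's $\rho\boxtimes\sigma=P_{\langle\pr_1,\pr_3\rangle}(\rho)\Land P_{\langle\pr_2,\pr_4\rangle}(\sigma)$, and your fibrewise observation that each $\des{\rho}$ is closed under the finite meets of $P(A)$ is exactly what the paper records. The only difference is that you spell out verifications the paper leaves implicit---well-definedness of reindexing via the preceding lemma, stability of descent data under $P_f$, the terminal object, reflexivity of the product relation via $\D_{\Delta}\dashv P_{\Delta}$, and uniqueness of the mediating arrow as an equivalence class---and all of these check out.
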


\begin{proof}
For $(A,\rho)$ and $(B,\sigma)$ in $\ct{Q}_P$
let 
$\pr_1,\pr_3:A\times B\times A\times B\to A$ and
$\pr_2,\pr_4:A\times B\times A\times B\to B$ be the four
projections. The meet of two $P$-equivalence relations on 
$A\times B$
$$\rho\boxtimes\sigma\colon=P_{<\pr_1,\pr_3>}(\rho)
\Land_{A\times B\times A\times B}
P_{<\pr_2,\pr_4>}(\sigma)$$
is a $P$-equivalence relation on $A\times B$ and it
provides an object $(A\times B,\rho\boxtimes\sigma)$ which, together
with the arrows determined by the two projections from $A\times B$,
gives a product of $(A,\rho)$ and $(B,\sigma)$ in $\ct{Q}_P$.\newline
For each $(A,\rho)$, the sub-partial order $\des\rho\subseteq P(A)$ is
closed under finite meets.\end{proof}

\start{Assume that $P$ has weak comprehensions} for the rest of
the section.

\begin{lemma}\label{wscl}
With the notation used above, \Q{P} is an elementary doctrine
with comprehensions and comprehensive equalizers. 
If $P$ has full weak comprehensions,
then \Q{P} has full comprehensions.
\end{lemma}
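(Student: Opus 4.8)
The plan is to transfer every required feature from $P$ to $\Q P$ fibrewise, the recurring mechanism being that the three clauses of Definition~\ref{per} for $\rho$ are exactly what keeps the transferred data inside the descent sub-orders $\des{(-)}$. I would begin with comprehensions. Given $(A,\rho)$ and $\alpha$ in $\Q P(A,\rho)=\des\rho$, pick a weak comprehension $\cmp\alpha\colon X\to A$ of $\alpha$ in $P$ and set $\xi\colon=P_{\cmp\alpha\times\cmp\alpha}(\rho)$, which is a $P$-equivalence relation on $X$ by Remark~\ref{reme}. The claim is that $\ec{\cmp\alpha}\colon(X,\xi)\to(A,\rho)$ is a strict comprehension of $\alpha$ in $\Q P$: it is a legitimate arrow because $\xi=P_{\cmp\alpha\times\cmp\alpha}(\rho)$, and the inequality $\tt_X\leq P_{\cmp\alpha}(\alpha)=\Q P_{\ec{\cmp\alpha}}(\alpha)$ is the one defining the weak comprehension. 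For the universal property, an arrow $\ec g\colon(Y,\upsilon)\to(A,\rho)$ with $\tt_Y\leq P_g(\alpha)$ yields, through the weak comprehension, some $h\colon Y\to X$ with $g=\cmp\alpha\circ h$; as $P_{h\times h}(\xi)=P_{g\times g}(\rho)$, the class $\ec h$ is an arrow $(Y,\upsilon)\to(X,\xi)$ factoring $\ec g$. The crucial point---which upgrades a weak comprehension to a strict one---is uniqueness: if $\ec{h'}$ is any factorization, then $\ec{\cmp\alpha\circ h'}=\ec g=\ec{\cmp\alpha\circ h}$, and unfolding the equivalence on arrows is literally the inequality $\upsilon\leq P_{h'\times h}(\xi)$, that is $\ec{h'}=\ec h$. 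Thus the non-uniqueness tolerated by $P$ is absorbed into the quotient on hom-sets, and by Remark~\ref{str} these comprehensions are automatically stable.

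Next I would show that $\Q P$ is elementary, with $\delta_{(A,\rho)}=\rho$. First, $\rho$ read as an object of $P(A\times A)$ already lies in $\des{\rho\boxtimes\rho}$, which is precisely symmetry and transitivity of $\rho$. For the left adjoints demanded by Definition~\ref{elh} I propose, for $\gamma$ in $\Q P((C,\tau)\times(A,\rho))$,
\[
\D_{\ec{\id C\times\Delta_A}}(\gamma)\colon=
P_{<\pr_C,\pr_1>}(\gamma)\Land P_{<\pr_1,\pr_2>}(\rho),
\]
where $\pr_C\colon C\times A\times A\to C$ and $\pr_1,\pr_2\colon C\times A\times A\to A$ are the projections. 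One checks that this lands in the fibre over $(C,\tau)\times(A,\rho)\times(A,\rho)$ using the descent condition on $\gamma$ together with symmetry and transitivity of $\rho$; that it is left adjoint to $\Q P_{\ec{\id C\times\Delta_A}}=P_{\id C\times\Delta_A}$, the unit coming from reflexivity of $\rho$ and the counit from the descent inequality of its argument; and that Frobenius holds because the descent condition on the upstairs argument makes the reindexing $P_{\id C\times\Delta_A}$ disappear once met with $P_{<\pr_1,\pr_2>}(\rho)$. Evaluating at the top element gives $\delta_{(A,\rho)}=\rho$, in the sense of Remark~\ref{equ}.

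It then remains to treat comprehensive equalizers and fullness. Since $\delta_{(A,\rho)}=\rho$ and $\Q P$ already has comprehensions, comprehensive equalizers reduce to checking that the diagonal $\Delta_{(A,\rho)}=\ec{\Delta_A}$ is the comprehension of $\rho$ in $\des{\rho\boxtimes\rho}$, which I would verify directly: the defining inequality $\tt_A\leq P_{\Delta_A}(\rho)$ is the unit of $\D_{\Delta_A}\dashv P_{\Delta_A}$ composed with reflexivity $\delta_A\leq\rho$; given $\ec g=\ec{<g_1,g_2>}$ with $\tt_Y\leq P_g(\rho)$ the factorization is $\ec{g_1}$, which is well-formed since $\upsilon\leq P_{g_1\times g_1}(\rho)$ and satisfies $\ec{\Delta_A}\circ\ec{g_1}=\ec g$ by transitivity, while uniqueness follows because $\ec{\Delta_A}\circ\ec h=\ec{\Delta_A}\circ\ec{h'}$ unfolds to $\upsilon\leq P_{h\times h'}(\rho)$. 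Stability is then free from Remark~\ref{str}. For the final assertion, if the weak comprehensions of $P$ are full then so is $\ec{\cmp\alpha}$, since the hypothesis $\tt_{(X,\xi)}\leq\Q P_{\ec{\cmp\alpha}}(\beta)$ reads $\tt_X\leq P_{\cmp\alpha}(\beta)$, whence $\alpha\leq\beta$ in $P(A)$ by fullness of $\cmp\alpha$, hence in $\des\rho$. I expect the main obstacle to be the bookkeeping of the elementary step---keeping the proposed left adjoints and Frobenius inside the descent sub-orders---though each verification ultimately reduces to the clauses of Definition~\ref{per}.
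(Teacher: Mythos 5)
Your proposal is correct and follows essentially the same route as the paper: the comprehension of $\beta$ in $\des\rho$ is the same arrow $\ec{\cmp\beta}:(X,P_{\cmp\beta\times\cmp\beta}(\rho))\to(A,\rho)$, the left adjoint along $\ec{\id{C}\times\Delta_A}$ is the same formula $P_{<\pr_1,\pr_2>}(\alpha)\Land P_{<\pr_2,\pr_3>}(\rho)$ up to a relabelling of projections, and fullness is transferred exactly as in the paper via the fact that the fibres of $\Q{P}$ are descent sub-orders. The only differences are organizational: you verify directly that $\ec{\Delta_A}$ is a strict comprehension of $\rho$ and recover equalizers through \ref{dswc}, whereas the paper constructs equalizers first from weak comprehensions and then observes comprehensive equalizers hold by construction, and you spell out the uniqueness and Frobenius checks that the paper leaves implicit.
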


\begin{proof}
First we show that $\ct{Q}_P$ has equalizers, hence all finite
limits. Consider a parallel pair
$\ec{f},\ec{g}:(A,\rho)\to(B,\sigma)$, 
and let
$e\colon=\cmp{P_{<f,g>}(\sigma)}:E\to A$ be a
weak comprehension. It is easy to see that
$\ec{e}:(E,P_{e\times e}(\rho))\to(A,\rho)$ is
an equalizer as required.\newline
A similar argument shows that $\Q{P}$ has comprehensions. More
precisely, given $(A,\rho)$ and $\beta$ in $\des\rho$, let
$\cmp\beta:X\to A$ be a weak comprehension for $\beta$ over $A$ in the 
doctrine $P$. A comprehension for $\beta$ over $(A,\rho)$ in
\Q{P} is
$$\ec{\cmp\beta}:(X,P_{\cmp\beta\times\cmp\beta}(\rho))\to(A,\rho).$$
Fullness of weak comprehensions in $P$ implies fullness of comprehensions
in \Q{P} because objects of \Q{P} on $(A,\rho)$ are descent data
related to $P$.\newline
A left adjoint $\QD_{\ec{\Delta_A}}$ for $\Q{P}_{\ec{\Delta_A}}$ is
computed by 
$$\QD_{\ec{\Delta_A}}(\alpha)\colon=
P_{\pr_1}(\alpha)\Land_{A\times A}\rho$$
for $\alpha$ in \des\rho.
Indeed, let $\theta$ be in \des{\rho\boxtimes\rho} such that
$\alpha\leq_{(A,\rho)}\Q{P}_{\ec{\Delta_A}}(\theta)$, 
{\it i.e.}  
$\alpha\leq_AP_{\Delta_A}(\theta)$. Thus
$\D_{\Delta_A}(\alpha)\leq_{A\times A}\theta$ and one has by remark~\ref{equ}
$$\begin{array}{r@{}l}
P_{\pr_1'}(\alpha)\Land P_{<\pr_1',\pr_2'>}(\delta_A)\Land
P_{<\pr_2',\pr_3'>}(\rho)&{}\leq_{A\times A\times A}
P_{<\pr_1',\pr_2'>}(\theta)\Land P_{<\pr_2',\pr_3'>}(\rho)\\[1.5ex]
&{}\leq_{A\times A\times A}P_{<\pr_1',\pr_3'>}(\theta)
\end{array}$$
for $\pr_i':A\times A\times A\to A,\ i=1,2,3$, the three
projections.
Hence $P_{\pr_1}(\alpha)\Land\rho\leq_{A\times A}\theta$, which gives
$\QD_{\ec{\Delta_A}}(\alpha)
\leq_{(A\times A,\rho\boxtimes\rho)}\theta$. It is easy 
to prove the converse that, if
$\QD_{\ec{\Delta_A}}(\alpha)\leq\theta$, then
$\alpha\leq\Q{P}_{\ec{\Delta_A}}(\theta)$.\newline
More generally, a left adjoint $\QD_{\ec{id_C\times \Delta_A}}$ for $\Q{P}_{\ec{id_C\times\Delta_A}}$
is computed analogously  by 
$$\QD_{\ec{id_C\times\Delta_A}}(\alpha)\colon=
P_{<\pr_1,\pr_2>}(\alpha)\Land_{C\times A\times A}P_{<\pr_2,\pr_3>}(\rho)$$
for $\alpha$ in $\Q{P}((C,\sigma)\times (A,\rho))$ with 
$\pr_1:C\times A\times A\rightarrow C$ and 
$\pr_i: C \times A\times A\rightarrow A$ for $i=2,3$ 
the corresponding projections.\newline
Finally, $\Q{P}$ has comprehensive equalizers by construction. 
\end{proof}

A careful reader may have noticed that the hypothesis on weak
comprehensions in $P$ in \ref{wscl}
was needed in order to construct all pullbacks
and characterize regular monos in $\ct{Q}_P$. 

\begin{lemma}
With the notation used above, 
\Q{P} has effective quotients of \Q{P}-equivalence 
relations and those are of effective descent.
\end{lemma}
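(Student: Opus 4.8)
The plan is to show that, once the definitions are unwound, a $\Q P$-equivalence relation on an object $(A,\rho)$ is nothing but an ordinary $P$-equivalence relation on $A$ that refines $\rho$, and that its quotient is obtained by simply replacing $\rho$ with the larger relation while keeping the identity of $A$ as the quotient map. So the whole construction reduces to bookkeeping in the fibre $P(A\times A)$.

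First I would analyse a $\Q P$-equivalence relation $\tau$ on $(A,\rho)$. By definition $\tau$ lies in $\des{\rho\boxtimes\rho}\subseteq P(A\times A)$, and the formula for $\QD_{\ec{\Delta_A}}$ from Lemma~\ref{wscl}, applied to the top $\tt_A$ of $\des\rho$, gives $\QD_{\ec{\Delta_A}}(\tt_A)=P_{\pr_1}(\tt_A)\Land\rho=\rho$; thus the $\Q P$-diagonal on $(A,\rho)$ is $\rho$ itself. Through the identification $\Q P_{\ec f}=P_f$, the three axioms for $\tau$ then read $\rho\leq\tau$ (reflexivity), $\tau\leq P_{<\pr_2,\pr_1>}(\tau)$ (symmetry), and the usual ternary inequality (transitivity). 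Since $\delta_A\leq\rho\leq\tau$, the relation $\tau$ is itself a $P$-equivalence relation on $A$, so $(A,\tau)$ is an object of $\cat Q_P$.

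The candidate quotient is $\ec{\id{A}}\colon(A,\rho)\to(A,\tau)$. It is a legitimate arrow because $\rho\leq\tau=P_{\id{A}\times\id{A}}(\tau)$, and it satisfies the quotient inequality because $\delta_{(A,\tau)}=\tau$ forces $\Q P_{\ec{\id{A}}\times\ec{\id{A}}}(\delta_{(A,\tau)})=\tau$. For the universal property, given $\ec g\colon(A,\rho)\to(Z,\zeta)$ with $\tau\leq P_{g\times g}(\zeta)$, the arrow $\ec g$ itself serves as a mediating factor, so existence is immediate. The delicate point, which I expect to be the main obstacle, is uniqueness: two mediating arrows $h,h'\colon(A,\tau)\to(Z,\zeta)$ agreeing after precomposition with $\ec{\id{A}}$ give only $\rho\leq P_{h\times h'}(\zeta)$, i.e. they are identified over $\rho$, and I must upgrade this to identification over the coarser $\tau$. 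I would first deduce $\tt_A\leq P_{<h,h'>}(\zeta)$ from $\delta_A\leq\rho\leq P_{h\times h'}(\zeta)$ by applying $P_{\Delta_A}$, and then pull back the transitivity of $\zeta$ along $<h\circ\pr_1,h'\circ\pr_1,h'\circ\pr_2>\colon A\times A\to Z^3$: the first factor collapses to $\tt_{A\times A}$ and the second is $P_{h'\times h'}(\zeta)\geq\tau$ by legitimacy of $h'$, whence $\tau\leq P_{h\times h'}(\zeta)$ and $\ec h=\ec{h'}$.

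Finally I would read off effectiveness and effective descent almost for free. The $\Q P$-kernel of $\ec{\id{A}}$ is $P_{\id{A}\times\id{A}}(\delta_{(A,\tau)})=\tau$, so the quotient is effective. For effective descent, the descent objects for this kernel on $(A,\rho)$ are exactly those $\alpha\in\des\rho$ satisfying $P_{\pr_1}(\alpha)\Land\tau\leq P_{\pr_2}(\alpha)$; but $\rho\leq\tau$ makes any such $\alpha$ automatically lie in $\des\rho$, so this subposet coincides with $\des\tau=\Q P(A,\tau)$, and $\Q P_{\ec{\id{A}}}$ acts as the identity on it, hence is an isomorphism. Stability under reindexing follows the same pattern, using the pullbacks in $\cat Q_P$ produced in Lemma~\ref{wscl}.
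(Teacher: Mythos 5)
Your proposal is correct and takes essentially the same route as the paper: its proof likewise observes that, since $\des{\rho\boxtimes\rho}$ sits inside $P(A\times A)$ and $\delta_A\leq\rho\leq\tau$, a \Q{P}-equivalence relation $\tau$ on $(A,\rho)$ is itself a $P$-equivalence relation on $A$, and exhibits $\ec{\id{A}}:(A,\rho)\to(A,\tau)$ as the quotient, with effectiveness and effective descent falling out of $\rho\leq_{A\times A}\tau$; your write-up simply makes explicit the verifications the paper leaves as ``easy to see'', including the sound uniqueness argument via transitivity of $\zeta$. One caveat: your closing sentence on stability is not part of this statement and does not follow by ``the same pattern''---the paper devotes a separate, more involved lemma to it, using weak comprehensions.
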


\begin{proof}
Since the sub-partial order $\des\rho\subseteq P(A)$ is
closed under finite meets, a \Q{P}-equivalence relation $\tau$ on
$(A,\rho)$ is also a $P$-equivalence relation on $A$. It is easy to see
that $\ec{\id{A}}:(A,\rho)\to(A,\tau)$ is an effective quotient and of
effective descent since $\rho\leq_{A\times A}\tau$.\end{proof}

\begin{lemma}
With the notation used above, 
a quotient of \Q{P}-equivalence is stable.
\end{lemma}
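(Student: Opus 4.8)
The plan is to reduce to the canonical presentation of quotients from the previous lemma and then pull back along an arbitrary arrow. By that lemma every quotient of a $\Q P$-equivalence relation is, up to isomorphism, of the shape $\ec{\id A}\colon(A,\rho)\to(A,\tau)$, where $\tau$ is a $\Q P$-equivalence relation on $(A,\rho)$; recall that, since $\des\rho$ is closed under finite meets, such a $\tau$ is also a $P$-equivalence relation on $A$ with $\rho\le_{A\times A}\tau$. Hence it suffices to prove that each such $\ec{\id A}$ is stable. Accordingly I would fix an arrow $\ec f\colon(C',\sigma')\to(A,\tau)$, with a chosen representative $f\colon C'\to A$ satisfying $\sigma'\le_{C'\times C'}P_{f\times f}(\tau)$, and construct the pullback of $\ec{\id A}$ along $\ec f$.

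First I would produce that pullback explicitly. By~\ref{wscl} the category $\cat Q_P$ has all finite limits, so the pullback is the equalizer of the parallel pair $\ec{\pr_1},\ec{f\circ\pr_2}\colon(A\times C',\rho\boxtimes\sigma')\to(A,\tau)$ coming from the two projections of $A\times C'$. Using the weak comprehension $e\colon=\cmp{P_{<\pr_1,f\circ\pr_2>}(\tau)}\colon E\to A\times C'$, exactly as in the proof of~\ref{wscl}, and writing $f'\colon=\pr_1\circ e$ and $q'\colon=\pr_2\circ e$, the square
$$\xymatrix@=3em{(E,\rho')\ar[d]_{\ec{f'}}\ar[r]^{\ec{q'}}&(C',\sigma')\ar[d]^{\ec f}\\(A,\rho)\ar[r]_{\ec{\id A}}&(A,\tau)}$$
is a pullback, where $\rho'\colon=P_{e\times e}(\rho\boxtimes\sigma')=P_{f'\times f'}(\rho)\Land P_{q'\times q'}(\sigma')$. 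I would then unwind the defining inequality $\tt_E\le_E P_e\bigl(P_{<\pr_1,f\circ\pr_2>}(\tau)\bigr)$ of the comprehension into the form $\tt_E\le_E P_{<f',f\circ q'>}(\tau)$; combined with reflexivity of $\tau$ this produces a section $s\colon C'\to E$ of $q'$, uniquely determined in $\cat C$ by $e\circ s=<f,\id{C'}>$.

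Finally I would verify that $\ec{q'}$ is a quotient of the reindexed relation $\Q P_{\ec{f'}\times\ec{f'}}(\tau)=P_{f'\times f'}(\tau)$ on $(E,\rho')$, which is a $\Q P$-equivalence relation by~\ref{reme}. The candidate mediating maps are supplied by the section $s$: for any $\ec g\colon(E,\rho')\to(Z,\zeta)$ that respects this relation, $\ec{g\circ s}$ is the proposed factorization through $\ec{q'}$, and its uniqueness follows because $q'$ is a split epi ($q'\circ s=\id{C'}$). The hard part will be the two remaining checks, namely that $\ec{g\circ s}$ is a legitimate arrow out of $(C',\sigma')$ and that $\ec{g\circ s}\circ\ec{q'}=\ec g$, equivalently that $\ec{q'}$ genuinely coequalizes the relation: here one must control the discrepancy between $\id E$ and $s\circ q'$, which by the displayed property of the comprehension agree on the $C'$-component and differ only up to $\tau$ on the $A$-component, and then convert invariance of $\ec g$ under $P_{f'\times f'}(\tau)$ into the needed equalities by transitivity and symmetry of $\tau$. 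This is exactly the point at which the effective-descent property of $\ec{\id A}$, inherited by the pulled-back arrow, has to be invoked; stability of the weak comprehensions and of the quotients already present in $P$ then guarantees that the whole configuration is preserved under further reindexing, which is what stability of the quotient in $\Q P$ amounts to.
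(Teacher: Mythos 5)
Your skeleton coincides with the paper's proof up to a point: the paper likewise reduces to the canonical quotient $\ec{\id{A}}:(A,\rho)\to(A,\tau)$, builds the pullback through the weak comprehension $e=\cmp{P_{f\times\id{A}}(\tau)}$ (your $(C',\sigma')$ is its $(B,\sigma)$, modulo the order of the factors), and your section $s$ with $e\circ s=<f,\id{C'}>$ is exactly the paper's arrow $g$ with $e\circ g=<\id{B},f>$, obtained from $\tt_B\leq P_{<f,f>}(\tau)$. The gap is in the relation of which you claim $\ec{q'}$ is a quotient. You take $\Q{P}_{\ec{f'}\times\ec{f'}}(\tau)=P_{f'\times f'}(\tau)$, but for that relation the first clause of the definition of quotient already fails: one would need $P_{f'\times f'}(\tau)\leq P_{q'\times q'}(\sigma)$, i.e.\ that $\tau$-relatedness of the $A$-components forces $\sigma$-relatedness of the $B$-components, which is false in general. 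Concretely, take for $P$ the subobject doctrine on sets, $A=B=\{0,1\}$, $\rho=\delta_A$, $\tau$ the total relation, $\sigma=\delta_B$, $f=\id{A}$: then $E\cong B\times A$ with $q',f'$ the projections, $P_{f'\times f'}(\tau)$ is total while $P_{q'\times q'}(\sigma)$ relates only pairs with equal first component; a genuine quotient of the total relation would be terminal, not $(B,\delta_B)$. This is precisely the step you defer to ``the hard part \ldots\ that $\ec{q'}$ genuinely coequalizes the relation'', and it cannot be repaired for your relation. The paper instead proves that $\ec{q'}$ is a quotient of $\omega:=P_{e\times e}(\sigma\boxtimes\tau)$, which, thanks to the comprehension inequality $\tt_E\leq P_{<f\circ q',f'>}(\tau)$ together with symmetry and transitivity of $\tau$, coincides with the $\Q{P}$-kernel $P_{q'\times q'}(\sigma)$ of $\ec{q'}$ and is strictly below $P_{f'\times f'}(\tau)$ in examples like the one above; the wording of the stability definition notwithstanding, that is the relation for which the statement holds and for which the paper argues.

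A second defect is how you propose to close: you invoke the effective-descent property ``inherited by the pulled-back arrow'' and ``stability of \ldots\ the quotients already present in $P$''. Neither is available. Effective descent of $\ec{q'}$ is part of what is being established (the preceding lemma gives it only for the canonical quotients $\ec{\id{A}}$), and the standing hypotheses in this section are only that $P$ is elementary with weak comprehensions and comprehensive weak equalizers---$P$ is not assumed to have any quotients at all, since these are exactly what the completion adds freely. The paper's finish avoids both issues: it observes that $\ec{g}:(B,\sigma)\to(E,\omega)$ has the retraction $\ec{q'}:(E,\omega)\to(B,\sigma)$, that this retraction is monic and hence $\ec{g}$ is an isomorphism, so that $\ec{q'}:(E,\upsilon)\to(B,\sigma)$ is, up to this isomorphism, the canonical quotient $\ec{\id{E}}:(E,\upsilon)\to(E,\omega)$ of $\omega$---no direct verification of the universal property is needed. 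For what it is worth, your universal-property computation via the section $s$ (factorization $\ec{g\circ s}$, uniqueness from $q'\circ s=\id{C'}$, and the comparison of $\id{E}$ with $s\circ q'$ up to $\tau$ on the $A$-component) does go through once the relation is corrected to $\omega$, since the pair $(s(q'(x)),x)$ is then $\omega$-related by reflexivity of $\sigma$ and the comprehension inequality; so the fix is local, but as written the proposal proves a false statement and leans on unavailable structure where the actual content lies.
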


\begin{proof}
Let $\tau$ be a \Q{P}-equivalence relation on
$(A,\rho)$, let $\ec{\id{A}}:(A,\rho)\to(A,\tau)$ be its quotient, and
let $\ec{f}:(B,\sigma)\to(A,\tau)$ be an arrow in $\ct{Q}_P$.
By the previous lemma \ref{wscl}, in $\ct{Q}_P$ there is a pullback
diagram
$$\xymatrix@C=1.5ex@R=2.5em{(E,\upsilon)\ar[rr]\ar[dd]
\ar[rd]|{\EC{\cmp{P_{f\times\id{A}}(\tau)}}}
&&(B,\sigma)\ar[dd]^{\EC{f}}\\
&(B\times A,\sigma\boxtimes\rho)
\ar[ru]_(.6){\EC{\pr_1}}\ar[ld]^(.4){\EC{\pr_2}}\\
(A,\rho)\ar[rr]_{\EC{\id{A}}}&&(A,\tau)}$$
where 
$\upsilon\colon=
P_{\cmp{P_{f\times\id{A}}(\tau)}\times\cmp{P_{f\times\id{A}}(\tau)}}
(\sigma\boxtimes\rho)$.
Let $e\colon=\cmp{P_{f\times\id{A}}(\tau)}$ 
and $\omega\colon=P_{e\times e}(\sigma\boxtimes\tau)$.
The arrow $<\id{B},f>:B\to B\times A$ of $\ct{C}$ factors through the
comprehension $\cmp{P_{f\times\id{A}}(\tau)}:E\to B\times A$ because
$$\tt_B\leq P_{f}(P_{\Delta_A}(\tau))=
P_{<f,f>}(\tau)=P_{<\id{B},f>}(P_{f\times\id{A}}(\tau)),$$
say $<\id{B},f>=\cmp{P_{f\times\id{A}}(\tau)}\circ g= e \circ g$. Moreover
$$\sigma\leq P_{<\id{B},f>\times<\id{B},f>}(\sigma\boxtimes\tau)=
P_{g\times g}(P_{e\times e}(\sigma\boxtimes\tau))=
P_{g\times g}(\omega)$$
hence producing a commutative diagram
$$\xymatrix@C=.5ex@R=3em{
(E,\omega)\ar[rd]_(.4){\EC{\cmp{P_{f\times\id{A}}(\tau)}}}&&
(B,\sigma)\ar[ll]^{\EC{g}}\ar[dl]^(.4){\EC{<\id{B},f>}}\\
&(B\times A,\sigma\boxtimes\tau)}$$
in $\ct{Q}_P$.
It is easy to see that $\ec{g}:(B,\sigma)\to(E,\omega)$ has a retraction 
$$\ec{\pr_1\circ\cmp{P_{f\times\id{A}}(\tau)}}:(E,\omega)\to(B,\sigma)$$
which is monic and hence an inverse of $\ec{g}$.
Therefore $(B,\sigma)$ becomes a quotient of $\omega$ on
$(E,\upsilon)$.\end{proof}

There is a 1-arrow $(J,j):P\to\Q{P}$ in \EH. The functor
$J:\ct{C}\to\ct{Q}_P$ sends an object $A$ in \ct{C} to
$(A,\delta_A)$ and an arrow $f:A\to B$ to 
$\ec{f}:(A,\delta_A)\to(B,\delta_B)$ since 
$\delta_A\leq_{A\times A}P_{f\times f}(\delta_B)$ by \ref{exer}(a) and
\ref{reme}. The functor $J$ is full. 

For $A$ in \ct{C}, the partial order
$\Q{P}(A,\delta_A)=\des{\delta_A}$ is  $P(A)$ since 
$P_{\pr_1}\alpha\Land\delta_A\leq_{A\times A}P_{\pr_2}\alpha$ for any 
$\alpha$ in $P(A)$. Take the function $j_A:P(A)\to\Q{P}(A,\delta_A)$
to be the identity.

\begin{lemma}\label{cov}\thmitem
With the notation used above,
\begin{enumerate}
\item every object in $\ct{Q}_P$ is a quotient of a \Q{P}-equivalence
relation on an object in the image of $J$
\item every object in the image of $J$ is projective with respect to
quotients of \Q{P}-e\-quiv\-a\-lence relation.
\end{enumerate}
\end{lemma}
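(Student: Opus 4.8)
The plan is to prove both parts by reducing everything to the canonical quotients $\ec{\id{A}}:(A,\rho)\to(A,\tau)$ identified in the lemma on effective quotients, exploiting that every \Q{P}-equivalence relation $\tau$ on $(A,\rho)$ is in fact a $P$-equivalence relation on $A$ with $\rho\leq_{A\times A}\tau$, so that its quotient is represented by $\id{A}$.

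For (1), given an object $(A,\rho)$ of $\cat{Q}_P$, I take the object $J(A)=(A,\delta_A)$ in the image of $J$ and show that $\rho$ itself is a \Q{P}-equivalence relation on $(A,\delta_A)$ whose quotient is $(A,\rho)$. First I check that $\rho$, viewed in $P(A\times A)$, belongs to $\Q{P}\big((A,\delta_A)\times(A,\delta_A)\big)=\des{\delta_A\boxtimes\delta_A}$: the descent condition reduces to the fact that $\delta_A\boxtimes\delta_A$ identifies the first two with the last two coordinates pairwise, which forces $\rho$ to transport. For reflexivity in \Q{P} I use the formula of \ref{wscl}, which for the object $(A,\delta_A)$ reads $\QD_{\ec{\Delta_A}}(\tt_A)=P_{\pr_1}(\tt_A)\Land_{A\times A}\delta_A=\delta_A$, so that by remark \ref{equ} one has $\delta_{(A,\delta_A)}=\delta_A\leq\rho$, the reflexivity of $\rho$ in $P$. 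Symmetry and transitivity transfer verbatim from $P$ to \Q{P}, since the reindexings $\Q{P}_{\ec{<\pr_2,\pr_1>}}$ and $\Q{P}_{\ec{<\pr_i,\pr_j>}}$ are computed on representatives by $P_{<\pr_2,\pr_1>}$ and $P_{<\pr_i,\pr_j>}$, and the order on descent data is inherited from $P$. The lemma on effective quotients then exhibits $\ec{\id{A}}:(A,\delta_A)\to(A,\rho)$ as the desired quotient.

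For (2), let $q$ be a quotient of a \Q{P}-equivalence relation with target $(C,\gamma)$, and let $\ec{h}:J(A)\to(C,\gamma)$ be arbitrary. By the lemma on effective quotients the domain of $q$ has the form $(B,\sigma)$, and $q$ is isomorphic to $\ec{\id{B}}:(B,\sigma)\to(B,\tau)$ for $P$-equivalence relations $\sigma\leq\tau$ on $B$; composing with the iso it suffices to lift along $\ec{\id{B}}$. The crucial point is that any arrow $h:A\to B$ of \cat{C} represents a legitimate arrow $\ec{h}:(A,\delta_A)\to(B,\zeta)$ for every $P$-equivalence relation $\zeta$ on $B$, because $\delta_A\leq_{A\times A}P_{h\times h}(\delta_B)\leq_{A\times A}P_{h\times h}(\zeta)$. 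Hence the very representative $h$ of $\ec{h}$ already defines an arrow $\ec{h}:(A,\delta_A)\to(B,\sigma)$, and $\ec{\id{B}}\circ\ec{h}=\ec{h}:(A,\delta_A)\to(B,\tau)$ coincides with the given map into $(B,\tau)$. Transporting back through the isomorphism yields the required lift, so $J(A)$ is projective.

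The verifications are essentially bookkeeping; the only steps deserving care are the identity $\delta_{(A,\delta_A)}=\delta_A$ needed for reflexivity in (1), and, in (2), the observation that over a free object $(A,\delta_A)$ the validity condition $\delta_A\leq_{A\times A}P_{f\times f}(\sigma)$ is automatic, so that descending from $(B,\tau)$ to $(B,\sigma)$ costs nothing. I expect the main (though minor) obstacle to be tracking the isomorphism in (2) carefully enough to confirm that a lift along the canonical quotient $\ec{\id{B}}$ produces a lift along the original $q$, which follows from the invariance of projectivity under isomorphism of the covering map.
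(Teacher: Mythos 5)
Your proof is correct and is essentially the paper's own (very terse) two-line argument: part (i) is exactly the observation that $(A,\rho)$ is a quotient of the \Q{P}-equivalence relation $\rho$ on $J(A)=(A,\delta_A)$ via $\ec{\id{A}}$, and part (ii) reduces an arbitrary quotient, up to the isomorphism furnished by its universal property, to one represented by an identity $\ec{\id{B}}:(B,\sigma)\to(B,\tau)$, along which a lift of $\ec{h}:(A,\delta_A)\to(B,\tau)$ is automatic since $\delta_A\leq_{A\times A} P_{h\times h}(\delta_B)\leq_{A\times A} P_{h\times h}(\sigma)$ holds for any representative $h$ by \ref{exer}(a) and \ref{reme}. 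One small polish: your check that $\rho$ lies in $\des{\delta_A\boxtimes\delta_A}$ is more safely justified not by a generic ``transport along componentwise equality'' but by reflexivity $\delta_A\leq\rho$ together with the symmetry and transitivity of $\rho$ itself, which give $P_{<\pr_1,\pr_2>}(\rho)\Land(\delta_A\boxtimes\delta_A)\leq P_{<\pr_1,\pr_2>}(\rho)\Land(\rho\boxtimes\rho)\leq P_{<\pr_3,\pr_4>}(\rho)$.
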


\begin{proof}
(i) The object $(A,\rho)$ is a quotient of $(A,\delta_A)$.\newline
(ii) A quotient of a \Q{P}-equivalence relation is isomorphic to one
  that has an identity as a representative.\end{proof}

\start{Assume that $P$ has comprehensive weak equalizers} for the
rest of the section.

\begin{lemma}\label{wec}
With the notation used above,  the functor
$J:\ct{C}\to\ct{Q}_P$ is faithful. 
\end{lemma}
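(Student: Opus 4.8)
The plan is to unwind what it means for two parallel arrows $f,g\colon A\to B$ in \cat{C} to become equal under $J$, and then to recover their equality from the comprehensive weak equalizer structure. By definition of the hom-sets of $\cat{Q}_P$, the arrows $\ec{f}$ and $\ec{g}$ from $(A,\delta_A)$ to $(B,\delta_B)$ coincide exactly when $f$ and $g$ are identified by the equivalence relation on representatives, that is, when $\delta_A\leq_{A\times A}P_{f\times g}(\delta_B)$. So faithfulness of $J$ amounts to the implication: if $\delta_A\leq_{A\times A}P_{f\times g}(\delta_B)$, then $f=g$.

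First I would transport this inequality along the diagonal of $A$. Since $<f,g>=(f\times g)\circ\Delta_A$, contravariant functoriality gives $P_{<f,g>}(\delta_B)=P_{\Delta_A}(P_{f\times g}(\delta_B))$. Reindexing the hypothesis by $P_{\Delta_A}$ and using that $\tt_A\leq P_{\Delta_A}(\delta_A)$---the unit of the adjunction defining $\delta_A=\D_{\Delta_A}(\tt_A)$, see remark~\ref{equ}---yields
$$\tt_A\leq P_{\Delta_A}(\delta_A)\leq P_{\Delta_A}(P_{f\times g}(\delta_B))=P_{<f,g>}(\delta_B).$$

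Now I would invoke the standing assumption that $P$ has comprehensive weak equalizers, applied to $B$: the diagonal $\Delta_B\colon B\to B\times B$ is a weak comprehension of $\delta_B$. The inequality $\tt_A\leq P_{<f,g>}(\delta_B)$ is precisely the hypothesis of the universal property of this comprehension for the arrow $<f,g>\colon A\to B\times B$, so there is an arrow $k\colon A\to B$ with $<f,g>=\Delta_B\circ k=<k,k>$. Composing with the two projections $B\times B\to B$ then gives $f=k=g$, as desired.

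There is no serious obstacle; the only point worth stressing is that mere existence of the factorization $<f,g>=\Delta_B\circ k$ already forces $f=g$, so that the weakness of the comprehension---the absence of uniqueness in its universal property---causes no difficulty, which is why comprehensive weak equalizers, rather than strict ones, suffice. The genuine content of the argument is the translation of the ``same arrow in $\cat{Q}_P$'' condition into a factorization problem through the diagonal comprehension of $\delta_B$, carried out in the first two steps.
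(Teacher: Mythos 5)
Your proof is correct and takes essentially the same route as the paper's: both translate $J(f)=J(g)$ into $\tt_A\leq_A P_{<f,g>}(\delta_B)$ using $\delta_A=\D_{\Delta_A}(\tt_A)$ and the adjunction $\D_{\Delta_A}\dashv P_{\Delta_A}$, and then exploit the hypothesis that the diagonal is a weak comprehension of $\delta_B$. The only difference is in the final step, where the paper invokes lemma~\ref{dswc} to conclude that the identity on $A$ equalizes $f$ and $g$, while you factor $<f,g>$ through $\Delta_B$ directly---an equivalent, indeed marginally leaner, way of extracting $f=g$, since it uses only the weak comprehension property of $\Delta_B$ and not its stability.
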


\begin{proof}
For $f,g:A\to B$ suppose that $J(f)=J(g)$. In other words,
$$\D_{\Delta_A}(\tt_A)=\delta_A\leq_{A\times A}P_{f\times g}(\delta_B)$$
and equivalently
$\tt_A\leq_AP_{\Delta_A}(P_{f\times g}(\delta_B))
\gel_AP_{<f, g>}(\delta_B)$
since $\D_{\Delta_A}\dashv P_{\Delta_A}$. It follows that the identity
on $A$ equalizes $f$ and $g$, thanks to lemma~\ref{dswc} and the
hypothesis that $P$ has comprehensive weak equalizers.\end{proof}

The doctrine $P:\ct{C}\op\longrightarrow\Ct{InfSL}$ in
\ref{wec} is obtained from
$\Q{P}:\ct{Q}_P\op\longrightarrow\Ct{InfSL}$
by change of base along $J$ since $j$ is the identity natural
transformation. In logical terms, this states that the theory 
expressing $P$ extended with a quotient constructor is conservative
over the original theory.

We can now prove that the assignment $P\mapsto\Q{P}$ gives a left
bi-adjoint to the forgetful 2-functor $U:\QH\to\CH$.

\begin{theorem}\label{mthm}
For every elementary doctrine
$P:\ct{C}\op\longrightarrow\Ct{InfSL}$ in \CH
pre-composition with the 1-arrow
$$
\xymatrix@C=4em@R=1em{
{\ct{C}\op}\ar[rd]^(.4){P}_(.4){}="P"\ar[dd]_J&\\
           & {\Ct{InfSL}}\\
{\ct{Q}_P\op}\ar[ru]_(.4){\Q{P}}^(.4){}="R"&\ar"P";"R"_j^{\kern-.4ex\cdot}}
$$
in \CH induces an essential equivalence of categories 
\begin{equation}\label{eqv}
-\circ(J,j):\QH(\Q{P},X)\equiv\CH(P,X)
\end{equation}
for every $X$ in \QH.
\end{theorem}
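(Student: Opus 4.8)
The plan is to show that the functor $-\circ(J,j)$ between the hom-categories is both essentially surjective and fully faithful. The substance lies in essential surjectivity: given a 1-arrow $(F,b)\colon P\to X$ in \CH, where $X\colon\cat{D}\op\longrightarrow\Cat{InfSL}$ is an object of \QH, I must construct a 1-arrow $(\tilde F,\tilde b)\colon\Q{P}\to X$ in \QH together with an invertible 2-cell $(\tilde F,\tilde b)\circ(J,j)\cong(F,b)$. The whole construction exploits that $X$, being in \QH, has stable effective quotients that are of effective descent.

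For the object assignment I observe that, since $F$ preserves products and $b$ preserves the elementary structure, $b_{A\times A}(\rho)$ is an $X$-equivalence relation on $F(A)$; I set $\tilde F(A,\rho)$ to be its quotient, with quotient arrow $q_\rho\colon F(A)\to\tilde F(A,\rho)$. For an arrow $\ec f\colon(A,\rho)\to(B,\sigma)$ the inequality $\rho\leq P_{f\times f}(\sigma)$ yields, after applying $b$ and using its naturality, that $b(\rho)\leq X_{(q_\sigma\circ F(f))\times(q_\sigma\circ F(f))}(\delta_{\tilde F(B,\sigma)})$; hence $q_\sigma\circ F(f)$ factors uniquely through $q_\rho$, and this factorization is $\tilde F(\ec f)$. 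The defining relation on representatives, $\rho\leq P_{f\times g}(\sigma)$, guarantees independence of the chosen $f$, exactly as in the lemma preceding the construction of $\Q{P}$. To define $\tilde b$, I note that for $\alpha$ in $\des\rho=\Q{P}(A,\rho)$ the object $b_A(\alpha)$ lies in $\des{b(\rho)}$; since $q_\rho$ is of effective descent, $X_{q_\rho}$ is an isomorphism onto $\des{b(\rho)}$, and I let $\tilde b_{(A,\rho)}(\alpha)$ be the unique object of $X(\tilde F(A,\rho))$ carried to $b_A(\alpha)$ by $X_{q_\rho}$.

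Next I would check that $(\tilde F,\tilde b)$ is a 1-arrow in \QH. One verifies that $\tilde F$ preserves finite products, comprehensions (computed as in \ref{wscl}) and quotients: a \Q{P}-quotient $\ec{\id{A}}\colon(A,\rho)\to(A,\tau)$ satisfies $b(\rho)\leq b(\tau)$, so it is carried to the comparison map between the two quotients in $\cat{D}$, which is itself a quotient map. One also checks that $\tilde b$ preserves finite meets and the elementary structure~(\ref{two}); each such property is obtained by transporting the corresponding property of $b$ through the effective-descent isomorphisms $X_{q_\rho}$. Finally the restriction along $(J,j)$ returns $(F,b)$ up to isomorphism: since $J(A)=(A,\delta_A)$ and, by~(\ref{two}), $b_{A\times A}(\delta_A)$ is the diagonal relation $\delta_{F(A)}$, the quotient of $b(\delta_A)$ is $\id{F(A)}$, whence $\tilde F(J(A))\cong F(A)$, and since $j_A$ is the identity we get $\tilde b_{(A,\delta_A)}=b_A$.

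For full faithfulness on 2-cells I rely on lemma~\ref{cov}. Faithfulness: the quotient arrows $\ec{\id{A}}\colon J(A)\to(A,\rho)$ are epic in $\cat{Q}_P$ and are carried to epic quotient maps by any 1-arrow in \QH, so the naturality squares force a 2-cell to be determined by its components on the image of $J$; hence two 2-cells with equal restriction coincide. Fullness: given a 2-cell between the restrictions, part~(ii) of \ref{cov}—projectivity of the objects $J(A)$ with respect to quotients—lets me define the component at a general $(A,\rho)$ by factoring through $\tilde G(\ec{\id{A}})$, while part~(i) ensures that these components account for every object; naturality and the governing inequality $X_{d_{(A,\rho)}}(\tilde b_{(A,\rho)}(\alpha))\leq\tilde c_{(A,\rho)}(\alpha)$ are then confirmed via the same effective-descent isomorphisms. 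I expect the main obstacle to be the verification that $\tilde F$ genuinely preserves quotients and that $\tilde b$ is well defined and structure-preserving coherently with respect to the chosen quotient objects in $X$, since this is precisely where the stability and effective-descent hypotheses on $X$ must be used in full, rather than the more formal 2-cell bookkeeping.
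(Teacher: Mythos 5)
Your proposal is correct and follows essentially the same route as the paper's (much terser) proof: essential surjectivity by sending $(A,\rho)$ to the quotient in $X$ of the $X$-equivalence relation $b_{A\times A}(\rho)$ on $F(A)$ and defining $\tilde b$ through the effective-descent isomorphisms, and full faithfulness via lemma~\ref{cov} (objects in the image of $J$ cover everything by quotients and are projective) together with the fact that effective descent makes $X(V)$ a full sub-partial order of $X(U)$ along a quotient $q:U\to V$, which transports the 2-cell inequalities. The verifications you flag as the main obstacle (preservation of quotients and comprehensions by $\tilde F$, coherence of $\tilde b$) are exactly the details the paper leaves implicit, and your indicated use of stability and effective descent there is the right way to discharge them.
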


\begin{proof}
Suppose $X$ is a doctrine in \QH.
As to full faithfulness of the functor in (\ref{eqv}), consider two
pairs $(F,b)$ and $(G,c)$ of 1-arrows from \Q{P} to $X$. By 
\ref{cov}, the natural transformation $\theta:F\stackrel.\to G$ in
a 2-arrow from $(F,b)$ to $(G,c)$ in \QH is completely 
determined by its action on objects in the image of $J$.
And, since a
quotient $q:U\to V$ of an $X$-equivalence relation on $U$
is of effective descent, $X(V)$ is a full sub-partial order of $X(U)$.
Essential surjectivity of the functor in (\ref{eqv}) follows from
\ref{cov}.
Preservation of comprehensions for $J$ follows from \ref{dm}
because a comprehension is monic.\end{proof}

The reason why $-\circ(J,j)$ need not be a strong equivalence in
\ref{mthm} is that quotients in the doctrine $X$ are determined
only up to iso. Since quotients in the doctrine \Q{P} are
determined by a specified construction, it is possible to strengthen
the result by restricting the 2-categories \QH and \CH to doctrines
with a choice of the categorical structure.

\begin{exms}\label{eqc}

\noindent(a)
The structure of the elementary quotient completion for the doctrine 
$LT:\ct{V}\op\longrightarrow\Ct{InfSL}$ from a first order theory
$\mathcal{T}$ 
with an equality predicate depends
heavily on the choice of basic operation symbols used to present the
theory $\mathcal{T}$, see \ref{wfs} and \ref{ccqc}.

\noindent(b)
For a cartesian category \ct{C} with weak pullbacks, 
the elementary quotient completion of the elementary doctrine 
$\Psi:\ct{C}\op\longrightarrow\Ct{InfSL}$ given in \ref{weaklhy}
is essentially equivalent to the exact completion of \ct{C}, see
\cite{CarboniA:regec}.
\end{exms}

\begin{remark}\label{exlexcom}
By \ref{regq}, the elementary quotient completion $\ct{Q}_S$ 
of an elementary doctrine $S:\ct{X}\op\longrightarrow\Ct{InfSL}$ in
\CH, obtained from a category \ct{X} with 
products and pullbacks as in \ref{monoe}, is regular.
Then, it may seem natural to compare $\ct{Q}_S$ 
with the regular completion of \ct{X}, see \cite{CarboniA:somfcr}, or
with the exact/regular completion of \ct{X} in case \ct{X} is
regular, see \cite{FreydP:cata,CarboniA:somfcr}. The three
constructions are in general not equivalent; that will be treated in
future work. Also, an example of an elementary quotient completion
that is not exact will be presented in section~\ref{twl}.
\end{remark}

In case an elementary doctrine
$P:\ct{C}\op\longrightarrow\Ct{InfSL}$ with comprehensive equalizers
fails to have \emph{all} comprehensions, it is possible 
to add these freely (hence fully) preserving the rest of the structure
by taking the fibration of vertical maps on the total category of $P$,
see \cite{jacobbook,TaylorP:prafom}.

\section{Applications to richer doctrines}\label{appl}

We shall now analyse the extent to which the elementary quotient
completion construction produced in the previous section behaves with respect to
further logical structure. We start with existential doctrines.

\begin{theorem}\label{exi}
Suppose $P:\ct{C}\op\longrightarrow\Ct{InfSL}$ is an existential
elementary doctrine. Then the
doctrine $\Q{P}:\ct{Q}_P\op\longrightarrow\Ct{InfSL}$ is existential
and the pair $(J,j):P\to\Q{P}$ is a 1-arrow in \EE.
\end{theorem}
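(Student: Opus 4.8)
The plan is to show that the existential structure of $P$ lifts to $\Q{P}$ fiberwise and then verify that $(J,j)$ respects it. Recall that for objects $(A,\rho)$ and $(B,\sigma)$ in $\cat{Q}_P$, the fiber $\Q{P}(A,\rho)$ is the subposet $\des\rho\subseteq P(A)$, products in $\cat{Q}_P$ are computed as $(A\times B,\rho\boxtimes\sigma)$, and reindexing $\Q{P}_{\ec{\pr}}$ is induced by the action of $P$ on the underlying projection. So the first step is to produce, for a projection $\ec{\pr}:(A\times B,\rho\boxtimes\sigma)\to(A,\rho)$, a left adjoint to $\Q{P}_{\ec{\pr}}$. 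The natural candidate is the existential quantifier $\D_{\pr}$ of $P$ applied to an element of $\des{\rho\boxtimes\sigma}$; the key point to check is that $\D_{\pr}$ sends descent data for $\rho\boxtimes\sigma$ to descent data for $\rho$, and that the universal property of the adjunction, tested against elements of $\des\rho$, reduces to the universal property already available in $P$.

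First I would fix $\alpha$ in $\des{\rho\boxtimes\sigma}=\Q{P}((A,\rho)\times(B,\sigma))$ and define $\QD_{\ec{\pr}}(\alpha)\colon=\D_{\pr}(\alpha)$, where $\D_{\pr}$ is the existential adjoint in $P$ along the underlying projection $\pr:A\times B\to A$. I would then verify that $\D_{\pr}(\alpha)$ lies in $\des\rho$, i.e.\ that $P_{\pr_1}(\D_{\pr}(\alpha))\Land_{A\times A}\rho\leq P_{\pr_2}(\D_{\pr}(\alpha))$. This is where the Beck-Chevalley condition for $P$ does the work: reindexing the existential along the relevant pullback square lets me move $\D_{\pr}$ past the projections $\pr_1,\pr_2$, and Frobenius reciprocity lets me absorb the meet with $\rho$ (lifted via $\rho\boxtimes\sigma$) into the quantifier, after which the descent inequality for $\alpha$ with respect to $\rho\boxtimes\sigma$ yields the required inequality for $\D_{\pr}(\alpha)$ with respect to $\rho$. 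Granting this, the adjunction $\QD_{\ec{\pr}}\dashv\Q{P}_{\ec{\pr}}$ holds because for $\beta$ in $\des\rho$ the test inequality $\QD_{\ec{\pr}}(\alpha)\leq_A\beta$ is literally $\D_{\pr}(\alpha)\leq_A\beta$, which by the adjunction in $P$ is $\alpha\leq_{A\times B}P_{\pr}(\beta)=\Q{P}_{\ec{\pr}}(\beta)$. The Beck-Chevalley and Frobenius conditions for $\Q{P}$ then follow from those for $P$ since all reindexings, meets, and quantifiers in $\Q{P}$ are computed by the corresponding operations in $P$ restricted to descent data.

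Finally I would check that $(J,j):P\to\Q{P}$ is a 1-arrow in \EE. Since $\Q{P}$ is now existential and $J$ preserves finite products and comprehensions, by the remark following the definition of \EE it suffices to show that $j$ preserves existential left adjoints along projections. But on an object $A$ in \cat{C} the fiber map $j_A:P(A)\to\Q{P}(A,\delta_A)=\des{\delta_A}$ is the identity (recall $\des{\delta_A}=P(A)$), and the projection $\ec{\pr}:(A\times B,\delta_A\boxtimes\delta_B)\to(A,\delta_A)$ has underlying arrow the projection $\pr:A\times B\to A$ with $\delta_A\boxtimes\delta_B=\delta_{A\times B}$; hence $\QD_{\ec{\pr}}$ is computed by exactly the same $\D_{\pr}$ of $P$, so $j$ commutes with the existential adjoints strictly.

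The main obstacle is the verification that $\D_{\pr}(\alpha)$ is again descent data: this is the one genuinely computational step, and it is the place where both Beck-Chevalley and Frobenius reciprocity of $P$ are indispensable. Everything else—products, reindexing, the two remaining Beck-Chevalley/Frobenius conditions for $\Q{P}$, and the $\EE$-compatibility of $(J,j)$—is bookkeeping that transports the corresponding facts about $P$ along the identification of $\Q{P}$-fibers with subposets of descent data.
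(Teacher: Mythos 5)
Your proposal is correct and takes essentially the same approach as the paper, whose entire proof is the one-line observation that the left adjoint along a projection with respect to $\Q{P}$ is given by the left adjoint along the underlying projection in \cat{C} with respect to $P$. The details you supply---that $\D_{\pr}$ carries descent data for $\rho\boxtimes\sigma$ to descent data for $\rho$ via Beck-Chevalley and Frobenius, that the adjunction and its two conditions restrict along the identification of $\Q{P}$-fibers with the subposets $\des{\rho}\subseteq P(A)$, and that $j$ preserves the existential adjoints because each $j_A$ is the identity on $\des{\delta_A}=P(A)$---are exactly the verifications the paper leaves implicit.
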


\begin{proof}
The left adjoint along a projection with respect to $\Q{P}$ is given
by the left adjoint along the underlying projection in $\ct{C}$ with
respect to $P$.\end{proof} 

We now consider other logical structures such as implications and
universal quantification, always remaining within Lawvere's unifying
schema of ``logical constructs as adjoints'', see 
\cite{LawvereF:adjif,jacobbook}.

\begin{definition}\label{imh}
A primary doctrine 
$P:\ct{C}\op\longrightarrow\Ct{InfSL}$
is \dfn{implicational} if, for every object $A$ in
\ct{C}, every $\alpha$ in $P(A)$, the functor
$\alpha\Land\blank:P(A)\to P(A)$  
has a right adjoint $\alpha\Implies\blank:P(A)\to P(A)$.
\end{definition}

\begin{exms}
\noindent(a)
The primary fibration $LT:\ct{V}\op\longrightarrow\Ct{InfSL}$, as
defined in \ref{lta} for a first order theory $\mathcal{T}$, is
implicational since, for $V$ and $W$ in  
$LT(\vec x)$, the formula $V\Implies W$ gives the value of the right
adjoint of $V\Land\blank$ on $W$.

\noindent(b)
The primary doctrine in example~\ref{model} is implicational: on a
subset $P$ of $A$, the right adjoint $P\Implies\blank$ on a subset
$Q$ of $A$ is computed as
$$P\Implies Q\colon=\left\{a\in A\mid 
a\in P\Implies a\in Q\right\}$$
\end{exms}

\begin{definition}\label{unh}
A primary doctrine 
$P:\ct{C}\op\longrightarrow\Ct{InfSL}$
is \dfn{universal}
if, for $A_1$ and $A_2$ in \ct{C}, for a(ny) projection
$\pr_i:A_1\times A_2\to A_i$, $i=1,2$,
the functor $P_{\pr_i}:P(A_i)\to P(A_1\times A_2)$ has a right adjoint
$\B_{\pr_i}$, and these satisfy the Beck-Chevalley condition:\newline
for any pullback diagram
$$\xymatrix{X'\ar[r]^{\pr'}\ar[d]_{f'}&A'\ar[d]^f\\X\ar[r]^{\pr}&A}$$
with $\pr$ a projection (hence also $\pr'$ a projection), for any
$\beta$ in $P(X)$, the canonical arrow 
$ P_f\B_\pr(\beta)\leq \B_{\pr'}P_{f'}(\beta)$ in $P(A')$ is iso; 
\end{definition}

\begin{exms}
\noindent(a)
The primary fibration $LT:\ct{V}\op\longrightarrow\Ct{InfSL}$, as
defined in \ref{lta} for a first order theory $\mathcal{T}$, is
universal. A  right adjoint to $P_\pr$ is computed by quantifying
universally the variables that are not involved in the substitution
given by the projection, {\it e.g.} for the projection
$\pr=[z/x]:(x,y)\to(z)$ and a formula $W$ with free variables at most
$x$ and $y$, $\B_\pr(W)$ is $\Forall y.(W[z/x])$.
As in the case of the left adjoints in \ref{exh}, the Beck-Chevalley
condition expresses the correct interplay between term substitution
and universal quantification.

\noindent(b)
The primary doctrine in example~\ref{model} is universal: on a subset
$U$ of $A$, the adjoint $\B_\pr$, for a projection $\pr$, is given by
$\B_\pr(U)\colon=\left\{b\in B\mid
\pr^{-1}\left\{b\right\}\subseteq U\right\}.$
\end{exms}

\begin{remark}\label{hure}
In an elementary doctrine $P:\ct{C}\op\longrightarrow\Ct{InfSL}$
which is implicational and universal
for every map $f:A\to B$ in \ct{C} the functor $P_f$ has a right
adjoint $\B_f$ that can be
computed as 
$$\B_{\pr_2}(P_{f\times \id B}(\delta_B)\Implies P_{\pr_1}(\alpha))$$
for $\alpha$ in $P(A)$, where $\pr_1$ and $\pr_2$ are the projections
from $A\times B$---similarly to \ref{here}.

Moreover, if $P$ has full weak comprehensions, then the converse also
holds by~\ref{wfs}.
\end{remark}

The following is an instance of a general situation for the quotient
completion: a weak representation in the doctrine $P$ gives
rise to a strict representation in the completion \Q{P}. Recall
from \cite{RosoliniG:loccce} that a 
\dfn{weak evaluation from $A$ to $B$} is an arrow 
$w:\W{A}{B}\times A\to B$
such that for every arrow $f:X\times A\to B$ there is an arrow
$f':X\to\W{A}{B}$ with $f=w\circ(f'\times\id{A})$. In
{\it loc.cit.}, the notion above was instrumental to give a
characterization of those exact completions which are locally
cartesian closed; it helps to show how the more general structure in
the present paper extends that of exact completion, see also
\cite{OostenJ:reaait}. We follow \cite{RosoliniG:loccce} and say that
\ct{C} is \dfn{weakly cartesian closed} if it has a weak evaluation
for each pair of objects.

\begin{prop}\label{ccqc}
Let $P:\ct{C}\op\longrightarrow\Ct{InfSL}$ be an elementary,
implicational, universal doctrine with weak comprehensions. If \ct{C}
is weakly cartesian closed, then $\ct{Q}_P$ is cartesian closed and
$\Q{P}$ is implicational and universal. Moreover, the 1-arrow 
$(J,j):P\to\Q{P}$ 
preserves the implicational and universal structure.
\end{prop}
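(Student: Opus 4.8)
The plan is to realise the exponentials of $\cat{Q}_P$ from the weak exponentials of \cat{C}, the point being that the \emph{extensional equality} relation carried by the exponential object upgrades the merely weak universal property of $\W{A}{B}$ to a strict one, since in $\cat{Q}_P$ a mediating arrow is only required to be unique up to that relation. Since $\cat{Q}_P$ already has finite products and limits by \ref{wscl}, it remains to produce exponentials. Fix $(A,\rho)$ and $(B,\sigma)$, put $F\colon=\W{A}{B}$ and let $w:F\times A\to B$ be a weak evaluation. Using that $P$ is universal and implicational I first write down, on $F$, the \emph{functionality} predicate
\[
\varphi\colon=\B_{\pr_F}\big(P_{\langle\ldots\rangle}(\rho)\Implies P_{\langle\ldots\rangle}(\sigma)\big)\in P(F)
\]
expressing that $w(-,a)$ and $w(-,a')$ are $\sigma$-related whenever $a\,\rho\,a'$, where $\pr_F:F\times A\times A\to F$ is the projection, the first reindexing is along the pair of $A$-coordinates and the second along the pair of composites of $w$. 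A weak comprehension $c\colon=\cmp{\varphi}:G\to F$ (available since $P$ has weak comprehensions) cuts out the functional elements, and the universal structure yields on $G\times G$ the relation $\epsilon$ of extensional equality, $\epsilon\colon=\B_{\pr}\big(P_{\langle\ldots\rangle}(\sigma)\big)$, where $\pr:G\times G\times A\to G\times G$ erases the single copy of $A$. One checks that $\epsilon$ is a $P$-equivalence relation on $G$ (reflexivity needs only reflexivity of $\sigma$, so no functionality is required here), so that $(G,\epsilon)$ is a legitimate object of $\cat{Q}_P$.

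I claim $(G,\epsilon)$ is the exponential $(B,\sigma)^{(A,\rho)}$ with evaluation $\ec{w\circ(c\times\id{A})}$. That this is a well-defined arrow of $\cat{Q}_P$ uses the functionality encoded in $G$ together with the compatibility of $\epsilon$. For the universal property, given $\ec{h}:(C,\tau)\times(A,\rho)\to(B,\sigma)$ I transpose a representative $h:C\times A\to B$ through the weak evaluation to obtain $\hat h:C\to F$ with $h=w\circ(\hat h\times\id{A})$; instantiating the compatibility $\tau\boxtimes\rho\leq P_{h\times h}(\sigma)$ at reflexive pairs forces $\tt_C\leq P_{\hat h}(\varphi)$, so $\hat h$ factors through the comprehension as $\hat h=c\circ k$, and the same compatibility shows that $\ec{k}:(C,\tau)\to(G,\epsilon)$ respects the relations. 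The required triangle holds already on representatives, while \emph{uniqueness} of $\ec{k}$ is exactly where $\epsilon$ earns its keep: any two solutions transpose back to arrows that are $\sigma$-equal to $h$, hence are $\epsilon$-related, so they coincide in $\cat{Q}_P$. I expect this last point---the passage from the weak data of \cat{C} to a strict universal property after quotienting---to be the main obstacle of the whole argument.

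For the fibrewise logic I would show that each fibre $\Q{P}(A,\rho)=\des{\rho}$ is a coreflective sub-inf-semilattice of the Heyting algebra $P(A)$. The right adjoint to the inclusion $\iota:\des{\rho}\hookrightarrow P(A)$ is the ``$\rho$-interior''
\[
R(\gamma)\colon=\B_{\pr_1}\big(\rho\Implies P_{\pr_2}(\gamma)\big),
\]
assembled from the universal and implicational structure of $P$; transitivity of $\rho$ gives $R(\gamma)\in\des{\rho}$, reflexivity gives $R(\gamma)\leq\gamma$, and the descent inequality for $\gamma$ gives the adjunction $\iota\dashv R$. Since $\iota$ preserves finite meets and $\des{\rho}$ is closed under them, the implication of $\des{\rho}$ is forced to be $\alpha\Implies_{\Q{P}}\beta\colon=R(\alpha\Implies_P\beta)$, so $\Q{P}$ is implicational. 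For universality I take the right adjoint along a projection $\ec{\pr}:(A,\rho)\times(B,\sigma)\to(A,\rho)$ to be $\B_{\pr}$ computed in $P$: a Beck--Chevalley rewriting, the counit $P_{\pr}\B_{\pr}\leq\mathrm{id}$, and reflexivity of $\sigma$ together show that $\B_{\pr}$ sends $\des{\rho\boxtimes\sigma}$ into $\des{\rho}$, and both the adjunction and the Beck--Chevalley condition for $\Q{P}$ descend from those of $P$.

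Preservation by $(J,j)$ is then essentially definitional. On an object $JA=(A,\delta_A)$ one has $\des{\delta_A}=P(A)$ with $j_A$ the identity; for $\rho=\delta_A$ the coreflector $R$ is the identity and the restricted $\B_{\pr}$ is just the universal quantifier of $P$, so that $j$ carries $\Implies_P$ to $\Implies_{\Q{P}}$ and $\B_{\pr}$ to $\B_{\ec{\pr}}$ on the nose. Hence the comparison cells witnessing that $(J,j)$ preserves the implicational and universal structure are identities, which is the last claim of the proposition.
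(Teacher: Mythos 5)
Your proposal is correct and, at its core, takes the same route as the paper: build the exponential from a weak evaluation $w:\W{A}{B}\times A\to B$, cut out the extensional (functional) elements by a weak comprehension, and endow them with extensional equality, so that quotienting by that relation upgrades the weak universal property to a strict one. The differences are organizational but worth recording. The paper works with a single binary predicate $\varphi=\B_{\pr_3}\B_{\pr_4}(P_{<\pr_3,\pr_4>}(\rho)\Implies P_{<w<\pr_1,\pr_3>,w<\pr_2,\pr_4>>}(\sigma))$ on $\W{A}{B}\times\W{A}{B}$, which is symmetric and transitive but not reflexive (a partial equivalence relation); it comprehends its diagonal $P_{\Delta_{\W{A}{B}}}(\varphi)$ and takes $\psi$ to be the restriction of $\varphi$ along the comprehension. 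Your unary functionality predicate is exactly the paper's $P_{\Delta_{\W{A}{B}}}(\varphi)$, while your one-variable extensional equality $\epsilon$ ($\forall a.\ w(e_1,a)\mathrel{\sigma}w(e_2,a)$) differs from $\psi$ as a formula but agrees with it on the comprehended object $G$: one inclusion uses reflexivity of $\rho$, the other uses functionality together with transitivity of $\sigma$, so the two candidate exponentials are isomorphic and your existence/uniqueness argument for transposes is the expansion of what the paper dismisses as straightforward. On the fibrewise structure the paper is laconic---``the right adjoints are computed by the corresponding right adjoints with respect to $P$''---which tacitly uses that $\des{\rho}$ is closed under $\alpha\Implies_P\beta$ for $\alpha,\beta$ in $\des{\rho}$ (by symmetry and transitivity of $\rho$) and that $\B_{\pr}$ carries $\des{\rho\boxtimes\sigma}$ into $\des{\rho}$ (by reflexivity of $\sigma$), adjunctions then being inherited because $\des{\rho}$ is a full sub-order. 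Your coreflection $R(\gamma)=\B_{\pr_1}(\rho\Implies P_{\pr_2}(\gamma))$ packages the same facts more uniformly: since $\gamma\leq R(\gamma)$ is literally the descent condition, $R$ restricts to the identity on $\des{\rho}$, so your formula $R(\alpha\Implies_P\beta)$ coincides with the paper's implication; what the coreflective presentation buys is a single recipe valid for arbitrary $\gamma$ in $P(A)$ and a conceptual explanation (each fibre of $\Q{P}$ is a coreflective subalgebra of a fibre of $P$), at the cost of machinery the paper avoids. Both treatments of preservation by $(J,j)$ are identical, resting on $\des{\delta_A}=P(A)$ with $j_A$ the identity.
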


\begin{proof}
Suppose that $(A,\rho)$ and $(B,\sigma)$ are objects in $\ct{Q}_P$
and $w:\W{A}{B}\times A\to B$ is a weak evaluation from $A$ to $B$ in
\ct{C}. Recalling \ref{wfs}, consider the object $\varphi$ of
$P(\W{A}{B}\times\W{A}{B})$:
$$
\varphi\, =\, \B_{\pr_3}\B_{\pr_4}(P_{<\pr_3,\pr_4>}(\rho)\Implies
P_{<w<\pr_1,\pr_3>,w<\pr_2,\pr_4>>}(\sigma))
$$
where $\pr_1,\pr_2:\W{A}{B}\times\W{A}{B}\times A\times A\to\W{A}{B}$
and $\pr_3,\pr_4:\W{A}{B}\times\W{A}{B}\times A\times A\to A$ are the
projections. 
Within the logic provided by the doctrine, it can be described as
the intension of those pairs $(e_1,e_2)$ of ``elements of $\W{A}{B}$''
such that
$$
\Forall a_3\in A.\Forall a_4\in A.(a_3\mathrel{\rho}a_4\Implies
w(e_1,a_3)\mathrel{\sigma}w(e_2,a_4))
$$
where we have used the same indices for the variables as those for the
projections. It is easy to see that $\varphi$ satisfies the conditions of
symmetry and transitivity in \ref{per} so that, considered
a weak comprehension $\cmp{P_{\Delta_{\W{A}{B}}}(\varphi)}:F\to\W{A}{B}$ of
$P_{\Delta_{\W{A}{B}}}(\varphi)$ in $P(\W{A}{B})$, the object
$\psi\colon=
P_{\cmp{P_{\Delta_{\W{A}{B}}}(\varphi)}\times\cmp{P_{\Delta_{\W{A}{B}}}(\varphi)}}
(\varphi)$
is a $P$-equivalence relation. It is straightforward to prove that
$$w\circ(\cmp{P_{\Delta_{\W{A}{B}}}(\varphi)}\times\id{A}):
(F,\psi)\times(A,\rho)\to(B,\sigma)$$
is a (strict) evalutation from
$(A,\rho)$ to $(B,\sigma)$.

The right adjoints required for the implicational and universal
structures on $\Q{P}$ are computed by
the corresponding right adjoints with respect to $P$.
\end{proof}

As an application, we shall consider the Axiom of Unique Choice
(AUC) within an existential
doctrine $P:\ct{C}\op\longrightarrow\Ct{InfSL}$ with the same
structure as in \ref{ccqc}. Recall that (AUC) states that, in a
cartesian closed regular category, a total
relation, which is {\em also} single-valued, contains the graph of a
function, see {\it e.g.} \cite{streicher}. We shall adapt it to the
more general frame of 
existential doctrines as in \ref{ccqc}.
Let $w:\W{A}{B}\times A\to B$ be a weak evaluation from $A$ to $B$ in
\ct{C}. 
We say that (AUC) \dfn{from $A$ to $B$ holds for $w$} if,
for every $\rho$ in $P(A\times B)$,
$$
\def\mp#1_#2{{\displaystyle\mathop{#1}_{\scriptstyle\kern1ex #2}}}
\begin{array}{@{}l@{}l@{}r@{}}
\mp\B_{A\to1}\mp\D_{A\times B\to A}(\rho)\Land\\
\multicolumn3{l}
{\ \Land\mp\B_{B\times B\to1}\mp\B_{A\times B\times B\to B\times B}
((P_{<\pr_1,\pr_2>}(\rho)\Land P_{<\pr_1,\pr_3>}(\rho))\Implies
P_{<\pr_2,\pr_3>}(\delta_B))\leq}\\
&&\leq\mp\D_{\W{A}{B}\to1}\mp\B_{\W{A}{B}\times A\to\W{A}{B}} 
P_{<\pr_2',w >}(\rho)
\end{array}
$$
where we dispensed with the labels on the various appropriate
projection arrows as these are easily reconstructed.

Using plain logical notation (as in the proof of \ref{ccqc}), 
$$\begin{array}{@{}l@{}l@{}r@{}}
\Forall x_1\in A.\Exists y_2\in B.\ x_1\mathrel{\rho}y_2\Land\\
\multicolumn3{l}
{\Land\Forall y_2\in B.\Forall y_3\in B.\Forall x_1\in A. 
((x_1\mathrel{\rho}y_2\Land x_1\mathrel{\rho}y_3)\Implies
y_2\mathrel{\delta_B} y_3\leq}\\
&&\leq\Exists h_1\in\W{A}{B}.\Forall x_2\in A.\
x_2\mathrel{\rho}w(h_1,x_2).
\end{array}$$

We skip the easy proof that, if (AUC) from $A$ to $B$ holds for a weak
evaluation, then it holds for every weak evaluation. For that reason,
from now on we say that (AUC) \dfn{from $A$ to $B$ holds}.

\begin{cor}\label{uac}\thmitem
Suppose that $P:\ct{C}\op\longrightarrow\Ct{InfSL}$ is an elementary 
existential, implicational and universal doctrine with weak
comprehensions. Suppose that \ct{C} is weakly cartesian closed and
that $A$ and $B$ are objects in \ct{C}. The
following are equivalent:
\begin{enumerate}
\item in $P$ {\normalfont(AUC)} from $A$ to $B$ holds
\item in $\Q{P}$ {\normalfont(AUC)} from $(A,\delta_A)$ to
$(B,\delta_B)$ holds. 
\end{enumerate}
\end{cor}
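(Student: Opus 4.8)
The plan is to regard each instance of (AUC) as a single inequality in the fibre over the terminal object and to check that the two instances reduce to the very same inequality in $P(1)$. Writing them as $L_P\leq R_P$ in $P$ and $L_{\Q P}\leq R_{\Q P}$ in $\Q{P}$, the first thing to note is that the ambient fibres coincide: $\Q{P}(1,\delta_1)=\des{\delta_1}=P(1)$ and $j_1$ is the identity, while, since $J$ preserves finite products, $(A,\delta_A)\times(B,\delta_B)=(A\times B,\delta_{A\times B})$ and $\Q{P}((A,\delta_A)\times(B,\delta_B))=\des{\delta_{A\times B}}=P(A\times B)$, so the outer quantifications over $\rho$ and over $\bar\rho$ range over the same objects. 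It therefore suffices, for a fixed $\rho\in P(A\times B)$, to identify $L_{\Q P}$ with $L_P$ and $R_{\Q P}$ with $R_P$ as elements of $P(1)$.

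First I would treat the left-hand side. Its two conjuncts are built solely from finite meets, the implication $\Implies$, the relation $\delta_B$, reindexings along maps in the image of $J$, and existential and universal adjoints along projections between objects in the image of $J$. By Theorem~\ref{exi} the pair $(J,j)$ is a $1$-arrow in \EE, hence preserves existential adjoints along projections; by Proposition~\ref{ccqc} it preserves the implicational and universal structure; and as a $1$-arrow in \CH it preserves finite meets, comprehensions and the elementary structure, in particular $\delta_B$. Since moreover $j$ is the identity, computing the left-hand side in $\Q{P}$ returns exactly the element of $P(1)$ obtained by computing it in $P$; that is, $L_{\Q P}=L_P$.

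The right-hand side is the delicate point, as it involves a function space. By the remark preceding the statement, (AUC) is independent of the chosen weak evaluation, a fact applying equally inside $P$ and inside $\Q{P}$; hence I may compute $R_{\Q P}$ using the strict evaluation furnished by the cartesian closure of $\cat{Q}_P$ in Proposition~\ref{ccqc}. For $\rho=\delta_A$ and $\sigma=\delta_B$ that construction simplifies, since $P_{\Delta_{\W A B}}(\varphi)=\tt_{\W A B}$ (any arrow of $\cat{C}$ respects $P$-equality), so its comprehension is the identity and the exponential is $(\W A B,\varphi)$ with $\varphi$ the extensional equality of functions and strict evaluation the image $\ec{w}$ of the weak evaluation $w$ itself. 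With this evaluation, $R_{\Q P}=\QD_{(\W A B,\varphi)\to 1}\,\Q{\B}_{(\W A B,\varphi)\times(A,\delta_A)\to(\W A B,\varphi)}\,\Q{P}_{\ec{<\pr_2,w>}}(\rho)$. Reindexing in $\Q{P}$ is computed on representatives in $P$, so the innermost term is $P_{<\pr_2,w>}(\rho)$; by Proposition~\ref{ccqc} the universal adjoint along the displayed projection is $\B$ along $\W A B\times A\to\W A B$ in $P$; and by Theorem~\ref{exi} the existential adjoint along $(\W A B,\varphi)\to 1$ is $\D$ along $\W A B\to 1$ in $P$. Thus $R_{\Q P}=\D_{\W A B\to 1}\B_{\W A B\times A\to\W A B}P_{<\pr_2,w>}(\rho)=R_P$, and combined with $L_{\Q P}=L_P$ this gives $L_{\Q P}\leq R_{\Q P}$ if and only if $L_P\leq R_P$.

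I expect the main obstacle to be exactly this right-hand side, and in particular the fact that the naive candidate $\ec{w}\colon(\W A B,\delta_{\W A B})\times(A,\delta_A)\to(B,\delta_B)$ obtained by applying $J$ to $w$ fails to be a weak evaluation in $\cat{Q}_P$: the map $h'$ produced by the weak evaluation property of $w$ in $\cat{C}$ need not respect the equivalence relation $\tau$ of an arbitrary domain $(X,\tau)$. This is precisely the gap between the intensional weak function space and the extensional one, and it forces one to pass through the exponential $(\W A B,\varphi)$ carrying extensional equality. The second subtlety is that this exponential is \emph{not} in the image of $J$, so the agreement of its existential and universal adjoints with those over $\W A B$ in $P$ cannot be read off from preservation by $(J,j)$ alone, but must be extracted from the explicit computations in Theorem~\ref{exi} and Proposition~\ref{ccqc}, together with the verification that the reindexed objects involved lie in the appropriate posets of descent data.
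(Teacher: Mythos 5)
Your proposal is correct and takes essentially the same route as the paper, whose entire proof of this corollary is the observation that it ``follows directly from the preservation properties of the 1-arrow $(J,j):P\to\Q{P}$'' given by Theorem~\ref{exi} and Proposition~\ref{ccqc}. Your expansion---in particular the reduction of both sides to the same elements of $P(1)=\Q{P}(1,\delta_1)$, the use of the strict evaluation on $(\W{A}{B},\varphi)$ in place of the non-evaluation $J(w)$, and the remark that since $(\W{A}{B},\varphi)$ is not in the image of $J$ one must appeal to the explicit computations of the adjoints in $\Q{P}$ rather than to preservation alone---simply makes explicit what the paper's one-line proof leaves implicit.
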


\begin{proof}
It follows directly from the preservation properties of the 1-arrow
$(J,j):P\to\Q{P}$ by proposition~\ref{exi} and \ref{ccqc}.\end{proof}

\section{Examples from constructive foundations}
The main applications of the elementary quotient completion appear in
the formalization of constructive mathematics in type theory. As we
already explained, the ability of constructing quotients of
equivalence relations is an essential tool of standard mathematics;
any formalization of constructive mathematics that intends to achieve
foundational relevance must allow treatment of such quotients in some
way, in particular if it is inherently intensional. We shall sketch
two examples of elementary quotient completion in type theory: one is
given over intensional Martin-L{\"o}f's type theory~\cite{PMTT} and
the other  over the intensional level of the minimalist
foundation \cite{m09}. Both models are based on total setoids \`a la
Bishop \cite{BISHOP}, with proof-terms as morphisms. 

\subsection{The setoid model over Martin-L{\"o}f's Type Theory}
The model of ``setoids'' over intensional Martin-L{\"o}f's Type 
Theory, see \cite{notepal,dyb}, is an instance  
of the exact completion of a category with finite products and weak
equalizers. We already know that it is an elementary quotient
completion, so we simply review the elementary doctrine one 
can construct in order to obtain it as such a completion.

The base category is a syntactic construction $\ct{ML}$
is defined as follows.
\begin{roll}
\item[The objects of $\ct{ML}$] are closed sets of MLTT.
\item[An arrow $\ec{t\in B\ [x\in A]}:A\to B$] is an equivalence
class of terms $t\in B\ [x\in A]$\footnote{To represent a term
depending on a variable $x$, we shall indifferently use
the metavariable $t$ or $t(x)$. We use the latter when we
need to refer to a substitution of a variable $x$ that may
appear in $t$.} derivable in MLTT where two
terms $t\in B\ [x\in A]$ and  $t'\in B\ [x\in A]$ are equivalent
if there is a proof-term in MLTT
$$p\in \mathsf{Id}(B, t, t')\ [x\in A]$$
\end{roll}
The composition of $\ec{t\in B\ [x\in A]}:A\to B$ and
$\ec{s(x)\in C\ [x\in B]}$ is defined as
$$\ec{s(t)\in C\ [x\in A]}:A\to C.$$
The identity morphism on $A$ is $\ec{x\in A\ [x\in A]}:A\to A$.

\begin{lemma}\label{synmltt}
The category $\ct{ML}$ has finite products and weak equalizers.
\end{lemma}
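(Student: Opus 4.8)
The plan is to realize each piece of structure by the evident type-theoretic construction and then to check the relevant universal property \emph{up to propositional equality}, since that is exactly the relation under which the morphisms of $\cat{ML}$ are taken. For the empty product I take the terminal object to be the singleton type $\mathsf{N}_1$, a closed set of MLTT. For every object $A$ its canonical element gives a morphism $A\to\mathsf{N}_1$, and uniqueness reduces to exhibiting, for any two terms $t,t'\in\mathsf{N}_1\ [x\in A]$, a proof-term in $\mathsf{Id}(\mathsf{N}_1,t,t')\ [x\in A]$; this comes from the eliminator of $\mathsf{N}_1$, which collapses the claim to reflexivity at the single canonical element, so that $\mathsf{N}_1$ is a strict terminal object.

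For binary products of closed sets $A$ and $B$ I would take the product type $A\times B$, using the two projection terms as projections and the pairing term for tupling. Given $\ec{f}:C\to A$ and $\ec{g}:C\to B$ represented by $f(x)\in A\ [x\in C]$ and $g(x)\in B\ [x\in C]$, the term $\langle f(x),g(x)\rangle\in A\times B\ [x\in C]$ represents a mediating morphism, and the two triangles commute definitionally by the $\beta$-rule of the product. The delicate point is uniqueness of the mediating arrow up to propositional equality: for a competitor $\ec{h}$ I must inhabit $\mathsf{Id}(A\times B,\langle\mathsf{p}_1 h(x),\mathsf{p}_2 h(x)\rangle,h(x))$ and combine this with the two commutativity hypotheses. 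This is precisely the propositional uniqueness (surjective pairing) principle for the product type, derivable from its eliminator; granting it, binary products exist and are strict.

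Next, weak equalizers. For a parallel pair $\ec{f},\ec{g}:A\to B$ represented by $f(x),g(x)\in B\ [x\in A]$, I would take the closed set
$$
E\colon=\Sigma_{x\in A}\,\mathsf{Id}(B,f(x),g(x))
$$
together with $\ec{e}:E\to A$ the first projection. Then $\ec{f}\circ\ec{e}=\ec{g}\circ\ec{e}$ holds because the second component $\mathsf{p}_2(z)\in\mathsf{Id}(B,f(\mathsf{p}_1 z),g(\mathsf{p}_1 z))$ of a generic $z\in E$ is itself the required proof-term. For the weak universal property, an arrow $\ec{h}:C\to A$ with $\ec{f}\circ\ec{h}=\ec{g}\circ\ec{h}$ comes equipped with a proof-term $q\in\mathsf{Id}(B,f(h(x)),g(h(x)))\ [x\in C]$, whence $\langle h(x),q\rangle\in E\ [x\in C]$ represents a morphism $\ec{k}:C\to E$ satisfying $\ec{e}\circ\ec{k}=\ec{h}$, again definitionally by the $\beta$-rule.

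The hard part, and the reason the statement asserts only \emph{weak} equalizers, is that $\ec{k}$ fails to be unique: it is built from the chosen proof-term $q$, and because $\mathsf{Id}$-types are proof-relevant in intensional MLTT---one cannot in general inhabit $\mathsf{Id}(\mathsf{Id}(B,f(h(x)),g(h(x))),q,q')$ for distinct witnesses without an extra principle such as Streicher's $\mathsf{K}$---two such choices need not be identified by propositional equality. Hence the equalizer cannot be strictified, which is exactly why $\cat{ML}$ carries only weak finite limits, the setting needed to present the setoid model as an exact completion.
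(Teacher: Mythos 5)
Your proof is correct and takes essentially the same route as the paper's: the terminal object $N_1$, the binary product given by the (non-dependent) sum $\ds_{x\in A}B$ with uniqueness secured by the propositional surjective-pairing term derived from the eliminator (the paper's displayed $\mathsf{idpeel}$ term), and the weak equalizer $\ds_{x\in A}\mathsf{Id}(B,t,t')$ with its first projection. Your closing discussion of proof relevance of $\mathsf{Id}$-types, explaining why the mediating arrow into the equalizer is not provably unique, is accurate supplementary commentary that the paper's terse proof omits.
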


\begin{proof}
A terminal object is the singleton $N_1$.
A product of closed sets $A,B$ is $\ds_{x\in A}B$
with its first and second projections since
$$
\mathsf{idpeel}(z,(x,y).\mathsf{id}(<x,y>))\in
\mathsf{Id}(\ds_{x\in A}B,<\pi_1(z),\pi_2(z)>,z)$$
thanks to the $\beta$-rules.\newline
A weak equalizer of $\ec{t\in B\ [x\in A]},
\ec{t'\in B\ [x\in A]}:A\to B$ is 
$$\ec{\pi_1(z)\in A\ [z\in \ds_{x\in A} \mathsf{Id}(B, t,t')]}:
\ds_{x\in A} \mathsf{Id}(B, t,t')\to A\eqno\qedhere$$
\end{proof}

The functor $\F:\ct{ML}\op\longrightarrow\Ct{InfSL}$
is defined on a closed set $A$ as the partial order consisting of
\begin{roll}
\item[equivalence classes $\ec{\phi\prp [x\in A]}$ of
predicates] in MLTT depending on $A$ with respect to
equiprovability, {\it i.e.} 
$\phi\prp [x\in A]\sim\phi'\prp [x\in A]$ if
there is a proof of
$\phi\leftrightarrow\phi'\prp [x\in A]$ in MLTT,
\item
[$\ec{\phi\prp [x\in A]}\leq\ec{\psi\prp [x\in A]}$]
if there is a proof-term 
$r\in\psi\ [x\in A, w\in \phi]$ in MLTT.
\end{roll}
The action of the functor on arrows in \ct{ML} is given by
substitution.

\begin{prop}\label{mlttd}
The doctrine $\F:\ct{ML}\op\longrightarrow\Ct{InfSL}$
is existential elementary, with full weak
comprehensions and comprehensive weak equalizers.
\end{prop}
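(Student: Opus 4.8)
The plan is to read each piece of structure off the corresponding type constructor of MLTT, exactly along the informal logic-to-type-theory dictionary already used for example~\ref{ltae}. That $\F$ is primary is immediate: finite meets in $\F(A)$ are conjunctions of predicates with the constantly true predicate $N_1$ as top, and substitution along any arrow preserves conjunction up to equiprovability. For the elementary structure the key move is that the intensional identity type is left adjoint to reindexing along a diagonal: I would set $\delta_A\colon=\mathsf{Id}(A,\pr_1,\pr_2)$ in $\F(A\times A)$ and, more generally, take $\D_{id_C\times\Delta_A}(\alpha)$ to be $\alpha$ conjoined with the identity predicate on the duplicated $A$-coordinate. The reflexivity term $\mathsf{id}$ witnesses the unit and the eliminator $\mathsf{idpeel}$ (the J-rule) the counit, while Frobenius reciprocity holds because conjunction distributes over the identity predicate. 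For the existential structure I would take $\D_\pr(\psi)\colon=\ds_{y\in B}\psi$ along a projection $\pr\colon A\times B\to A$; the adjunction $\D_\pr\dashv\F_\pr$ is the usual correspondence between a proof from $\ds_{y\in B}\psi$ and a proof from $\psi$ with $y$ free, Beck--Chevalley follows from stability of strong sums under substitution (the $\beta$-rules for $\pi_1,\pi_2$), and Frobenius from the distribution of conjunction over $\Sigma$.

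For full weak comprehensions, given $\phi$ in $\F(A)$ I would take $\cmp\phi\colon=\ec{\pi_1(z)\in A\ [z\in\ds_{x\in A}\phi]}$. The second projection supplies a proof-term $\pi_2(z)\in\phi(\pi_1(z))$, giving $\tt\leq\F_{\cmp\phi}(\phi)$; the weak universal property is that a map $g\colon Y\to A$ together with a witness $w\in\phi(g(y))$ yields the mediating arrow $\ec{\langle g(y),w\rangle}$, which need not be unique precisely because $w$ is not determined. Fullness is the one genuinely new point: if $r\in\psi(\pi_1(z))\ [z\in\ds_{x\in A}\phi]$, then substituting $z=\langle x,w\rangle$ and using $\pi_1(\langle x,w\rangle)=x$ gives $r[\langle x,w\rangle/z]\in\psi(x)\ [x\in A,\,w\in\phi]$, which is exactly $\phi\leq_A\psi$.

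Finally, comprehensive weak equalizers amounts to showing that $\Delta_A$ is a \emph{stable} weak comprehension of $\delta_A=\mathsf{Id}(A,\pr_1,\pr_2)$. Stability is routine: reindexing $\delta_A$ along $f\colon A'\to A\times A$ gives $\mathsf{Id}(A,\pr_1 f,\pr_2 f)$, whose comprehension is the weak equalizer of $\pr_1 f,\pr_2 f$ produced in lemma~\ref{synmltt}, and the resulting square is a weak pullback because $\Sigma$ and $\mathsf{Id}$ commute with substitution. The reflexivity term $\mathsf{id}(x)\in\mathsf{Id}(A,x,x)$ gives $\tt_A\leq\F_{\Delta_A}(\delta_A)$. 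I expect the only step that is more than bookkeeping to be the weak universal property of this comprehension: given $g\colon Y\to A\times A$ and a proof $p\in\mathsf{Id}(A,\pr_1 g(y),\pr_2 g(y))$, one must produce $h\colon Y\to A$ with $\Delta_A\circ h=g$ as arrows of $\cat{ML}$, i.e.\ up to propositional equality. Taking $h\colon=\ec{\pr_1 g(y)\in A\ [y\in Y]}$, the required identification of $\langle\pr_1 g(y),\pr_1 g(y)\rangle$ with $\langle\pr_1 g(y),\pr_2 g(y)\rangle$ is a transport along $p$ delivered by $\mathsf{idpeel}$; this is where the identity eliminator does the real work and is exactly why the property can only be \emph{weak}.
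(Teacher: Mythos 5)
Your proposal is correct, and in substance it follows the same dictionary as the paper's proof: conjunction and the singleton for the primary structure, $\Sigma$-types for the existential adjoints and for weak comprehensions (the same arrow $\ec{\pi_1(w)\in A\ [w\in \ds_{x\in A}\phi]}$), and the identity type, with $\mathsf{idpeel}$ doing the real work, for the elementary structure. Two local differences are worth noting. First, the paper constructs the left adjoint along an \emph{arbitrary} arrow $\ec{t(x)\in B\ [x\in A]}$ in one stroke, as $\ds_{x\in A}(\mathsf{Id}(B,t(x),y)\wedge\phi(x))$, which specializes at once to your two cases ($\Sigma$ along projections, the identity predicate along $\mathrm{id}_C\times\Delta_A$); your case-by-case construction verifies exactly what definitions~\ref{elh} and~\ref{exh} require, while the paper's single formula is more economical and anticipates remark~\ref{here}. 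Second, your fullness argument is genuinely different: the paper exhibits a proof-term witnessing $\phi\leq\D_{\cmp\phi}(\tt_W)$ and then appeals to the adjunction $\D_{\cmp\phi}\dashv\F_{\cmp\phi}$, so its argument routes through the existential structure, whereas your direct substitution $z:=\langle x,w\rangle$ together with the judgemental $\beta$-rule $\pi_1(\langle x,w\rangle)=x$ yields $\phi\leq_A\psi$ immediately and shows that fullness of these weak comprehensions does not depend on the existential adjoints at all. Finally, where the paper dismisses comprehensive weak equalizers as ``an immediate, direct calculation,'' your expansion supplies the right details---$h:=\ec{\pr_1 g(y)\in A\ [y\in Y]}$, transport along the given identity proof via $\mathsf{idpeel}$, plus (implicitly) the propositional surjective pairing already proved in lemma~\ref{synmltt} to identify $\langle\pi_1 g(y),\pi_2 g(y)\rangle$ with $g(y)$---and correctly locates why the comprehension property can only be weak, namely that the witness $w$ in the mediating arrow is not determined.
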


\begin{proof}
Products of propositions and the singleton set provide finite meets
thanks 
to the identification of propositions with sets typical of MLTT.
The left adjoint to substitution with
$\ec{t(x)\in B\ [x\in A]}:A\to B$ is computed, for 
$\phi(x)\prp[x\in A]$,
as the equivalence class represented by the proposition
$$
\ds_{x\in A}(\mathsf{Id}(B,t(x),y)\wedge\phi(x))\prp [y\in B].
$$
A weak comprehension $\cmp\phi$ of $\phi\prp [x\in A]$
is given by 
$$\ec{\pi_1(w)\in A\ [w\in \ds_{x\in A}\phi]}:
\ds_{x\in A}\phi\to A.$$
Note that weak comprehensions are full: it is enough to show
a proof of $\phi\leq\exists_{\cmp\phi}\top_W$ (with $W$ codomain
of $\cmp\phi$) given by
$$
<<y,p>,<\mathsf{id}(y),\ast>>\in 
\ds_{w\in W}\left(\mathsf{Id}(A,\pi_1(w),y)
\wedge N_1 \right) [y\in A,p\in \phi].
$$
with $\ast$ the canonical element of the singleton set  $N_1$.
It follows by an immediate, direct calculation that weak equalizers are
comprehensions.
\end{proof}

The doctrine $\F:\ct{ML}\op\longrightarrow\Ct{InfSL}$
enjoys more properties.

\begin{prop}
\label{mlbc}
The category \ct{ML} is weakly cartesian closed.
For every arrow $f$ in \ct{C}, the functor $\F_f$ has also a right
adjoint $($which necessarily satisfies Beck-Chevalley condition$)$. 
\end{prop}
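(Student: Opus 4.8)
The statement splits into two essentially independent claims: weak cartesian closure of the base \cat{ML}, and the existence of a right adjoint to every reindexing of \F. I would prove them separately, and for the second I would reduce to the already-established \ref{hure} rather than exhibiting adjoints by hand.

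For weak cartesian closure, the plan is to take as weak exponential $\W{A}{B}$ the closed set $A\to B$ of MLTT (that is, $\prod_{x\in A}B$, with $B$ closed), and as weak evaluation the class of $\mathsf{Ap}(\pi_1(z),\pi_2(z))\in B$ in the context $z\in (A\to B)\times A$, where the product of \cat{ML} is $\sum$. Given an arrow $\ec{t\in B\ [z\in X\times A]}$, I would produce its transpose by $\lambda$-abstraction, setting $f'=\ec{\lambda y.\,t(\langle x,y\rangle)\in A\to B\ [x\in X]}$. The required identity $w\circ(f'\times\id A)=\ec{t}$ then follows because the $\beta$-rule yields the definitional equality $\mathsf{Ap}(\lambda y.t(\langle x,y\rangle),a)=t(\langle x,a\rangle)$, which supplies the proof-term in the relevant $\mathsf{Id}$-type witnessing equality of arrows in \cat{ML}. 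The evaluation is only weak --- there is no uniqueness of $f'$ --- precisely because intensional MLTT lacks function extensionality.

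For the right adjoints, the plan is to show that \F is implicational and universal and then invoke \ref{hure}. Implicationality comes from propositions-as-types: the function type $\phi\to\psi$ serves as $\phi\Implies\psi$, and the adjunction $\chi\Land\phi\leq\psi$ iff $\chi\leq(\phi\to\psi)$ is witnessed by the obvious pair of proof-terms (uncurrying and currying). Universality comes from dependent products: along a projection $\pr:A_1\times A_2\to A_1$ the right adjoint $\B_\pr$ sends $\phi\ [a_1\in A_1,a_2\in A_2]$ to $\prod_{a_2\in A_2}\phi\ [a_1\in A_1]$, and the adjunction is again immediate from the $\prod$-rules. The one point requiring care is Beck-Chevalley: for the pullback of $\pr$ along an arbitrary $f$ (which is again a projection after the product decomposition) both composites reduce to $\prod_{a_2\in A_2}\phi[f/a_1]$, and they agree because substitution commutes with $\prod$ over a bound variable not occurring in the substituted term --- a definitional equality in MLTT, hence a fortiori an isomorphism in the fibre. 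Granting implicationality and universality, \ref{hure} yields a right adjoint $\B_f$ to $\F_f$ for every arrow $f$, computed as $\B_{\pr_2}(\F_{f\times\id B}(\delta_B)\Implies\F_{\pr_1}(\alpha))$; and Beck-Chevalley for an arbitrary $f$ follows from that of the universal structure together with this explicit formula, which is the content of the parenthetical in the statement.

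The only real obstacle I anticipate is bookkeeping rather than conceptual: one must keep straight that the arrows and the order of \cat{ML} and \F are taken up to propositional equality and equiprovability, whereas all the equalities produced above --- from the $\beta$-rule and from substitution into $\prod$ --- are definitional, and so must first be promoted to $\mathsf{Id}$-proof-terms, respectively to the required inequalities of equivalence classes, before they can be read as statements about \cat{ML} and \F. Once this translation is performed uniformly, each verification is a short, direct calculation in the type theory.
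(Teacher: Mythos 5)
Your proposal is correct; the first claim is handled exactly as in the paper, while the second takes a genuinely more structured route. For weak cartesian closure the paper likewise takes $\dpd_{x\in A}B$ with evaluation $\ec{\pi_1(z)(\pi_2(z))\in B\ [z\in(\dpd_{x\in A}B)\times A]}$, transposition by $\lambda$-abstraction, and weakness traced to the failure of function extensionality (inhabitation of $\mathsf{Id}(B,f(x),g(x))\ [x\in A]$ does not yield $\mathsf{Id}(\dpd_{x\in A}B,\lambda x.f(x),\lambda x.g(x))$); note only that the composite $w\circ(f'\times\id{A})$ reduces by $\beta$ to $t(\langle\pi_1(z),\pi_2(z)\rangle)$, so besides the $\beta$-rule you also need the propositional surjective-pairing term from the proof of \ref{synmltt} to reach $t(z)$ --- a point safely absorbed by your final bookkeeping paragraph. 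For the right adjoints the paper is a one-liner --- ``a right adjoint to substitution is given by the dependent product set'' --- i.e.\ a direct construction, whereas you first establish that $\F$ is implicational (function types) and universal ($\dpd$ along projections, with Beck--Chevalley from substitution commuting with $\dpd$) and then invoke \ref{hure} to obtain $\B_f$ for an arbitrary arrow $f$ as $\B_{\pr_2}(\F_{f\times\id{B}}(\delta_B)\Implies\F_{\pr_1}(\alpha))$; this is legitimate since \ref{mlttd} already makes $\F$ elementary, and the pullbacks of projections needed in \ref{unh} exist in \cat{ML} because it has finite products. Unwound in the type theory your formula reads $\dpd_{x\in A}(\mathsf{Id}(B,f(x),y)\to\phi(x))$ for $y\in B$, so both routes land on the same term; but your decomposition makes explicit the identity-type ingredient that the paper's phrase ``dependent product set'' leaves tacit (for a general term substitution a bare $\dpd$ does not suffice), it mirrors what the paper itself does for \mtt in \ref{mttd}, it records the implicational and universal structure of $\F$ that proposition \ref{ccqc} later consumes, and it makes the parenthetical Beck--Chevalley claim traceable to the universal structure via the explicit formula. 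The price is only the uniform promotion of definitional equalities to $\mathsf{Id}$-proof-terms and fibred inequalities, which you correctly identify and which the paper's terser proof incurs just the same.
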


\begin{proof}
The category \ct{ML} is weakly cartesian closed because given
sets $A$ and $B$, a weak evaluation is defined via the dependent
product as 
$$\ec{\pi_1(z)(\pi_2(z))\in B\ 
\left[z\in\left(\dpd_{x\in A} B\right)\times A\right]}:
\left(\dpd_{x\in A} B\right)\times A\to B.$$
It is only a weak evalutation because 
inhabitation of $\mathsf{Id}(B(x),f(x),g(x))\ [x\in A]$ cannot
ensure inhabitation of 
$\mathsf{Id}(\dpd_{x\in A}B(x),\lambda x.f(x),\lambda x. g(x))$, see \cite{disttheshof}.
A right adjoint to substitution is given by the dependent product set.
\end{proof}

In fact, the doctrine  $\F$ is
isomorphic to the doctrine 
$\Psi^{\ct{ML}}:\ct{ML}\op\longrightarrow\Ct{InfSL}$,
constructed as in \ref{weaklhy} from the category \ct{ML}. The
isomorphism 
$$(\Id{\ct{ML}},h):\F\to\Psi^{\ct{ML}}$$
has the identity functor in the first component and, for $A$ a closed
set, the functor $h(A):\F(A)\to\Psi^{\ct{ML}}(A)$ maps 
$\ec{\phi(x)\textit{ prop\/}\ [x\in A]}$ to the equivalence class
represented by the arrow
\begin{equation}\label{ir}
\ec{\pi_1(w)\in A\ [w\in \ds_{x\in A}\phi(x)]}:
\ds_{x\in A}\phi(x)\to A
\end{equation}
which has codomain $A$. 
Note that a proof-term
$t(y)\in \psi\ [x\in A, y\in \phi]$ produces a
map and a commutative triangle
$$\xymatrix@C=7em{{\ds_{x\in A}\phi}
\ar[rr]^{\ec{<\pi_1(w),t(\pi_2(w))>\in\ds_{x\in A}\psi\ 
 [w\in\ds_{x\in A}\in \phi]}}
\ar[rd]_{\ec{\pi_1(w)\in A\ [w\in \ds_{x\in A}\phi]}\kern5em}&&
{\ds_{x\in A}\psi}
\ar[ld]^{\kern5em\ec{\pi_1(w)\in A\ [w\in \ds_{x\in A}\psi]}}\\
& A}$$
which shows that the assignment for $h(A)$ given in (\ref{ir}) does
not depend on the choice of representatives and it extends to a
functor. Moreover, one can easily check that the functor 
$k(A):\Psi^{\ct{ML}}(A)\to\F(A)$ mapping $f(y)\in A\ [y\in B]$ to
$\ds_{y\in B}\mathsf{Id}(A,f(y),x)$ for $x\in A$ provides an inverse to
$h(A)$.

\begin{remark}\label{mlttequiv}
The base category $\ct{Q}_{\F}$ of the elementary quotient completion
$\Q{\F}$ of $\F:\ct{ML}\op\longrightarrow\Ct{InfSL}$ is essentially
equivalent to the model of total setoids over Martin-L\"of type theory
as given in \cite{notepal}.
\end{remark}

\subsection{The quotient completion in the two-level minimalist foundation}\label{twl}
One of the motivations for developing the  notion of elementary quotient
completion  is to describe abstractly the ``quotient
model'' given in \cite{m09}.
That model is built over a dependent type theory acting as the
intensional level of the two-level minimalist foundation for 
constructive mathematics  in \cite{m09}.  It is used 
to give an interpretation of the extensional level of the
foundation---denoted \emtt 
for {\it extensional minimal type theory}---in the intensional
level---denoted \mtt for {\it minimal type theory}. 
Quite similar to that built over Martin-L{\"o}f's type theory, also
this model is based on the idea of total setoids \`a la Bishop, but
it appears to have different properties from that. For instance, the
setoid model built over a set-theoretic fragment of \mtt is not
exact as we note in the following---hence it cannot be an exact
completion.

Recall from \cite{m09} that \mtt is a predicative 
version of Coquand's Calculus of Constructions \CTT, see
\cite{tc90}. Indeed, as \CTT, it resembles a {\it many sorted logic}
with propositions given primitively and depending on
sorts. Propositions are equipped with proof-terms and sorts include
dependent sets as in Martin-L{\"o}f's type theory. But, contrary to
\CTT, in \mtt there are two types of sorts on which  propositions
depend: these are sets and collections, where the first are included
in the latter; this distinction resembles the
usual distinction between sets and classes in NBG and is instrumental
to represent the power collection of a set which, from a 
predicative point of view, need not be a set. 
However the whole \mtt can be interpreted in \CTT
if one interprets both sets and collections in \mtt as sets in \CTT.
Indeed, if in \mtt collections were identified with sets
then one would just get \CTT.
Therefore, many properties of the quotient model over \mtt can be
extended to the corresponding quotient model over \CTT. 

Corresponding to sets and collections in \mtt we
consider two doctrines: one is a sub-doctrine of
the other. The larger has the base category
formed by collections with their typed terms, the other has the
base category restricted to the full subcategory on sets. 

The ``syntactic category of collections in \mtt'' $\CM$ is
constructed like the previous example. It has 
closed collections of \mtt as objects. An arrow 
$\ec{t\in B\ [x\in A]}:A\to B$ from the closed collection $A$ to
the closed collection $B$ is an equivalence class of
proof-terms in \mtt where 
$t\in A\ [x\in B]$ is equivalent to $t'\in A\ [x\in B]$
if there is a proof-term 
$$p\in \mathsf{Id}(A,t,t')\ [x\in B]$$
in \mtt.
The composition of the two arrows $\ec{t\in B\ [x\in A]}:A\to B$ and 
$\ec{s(y)\in C\ [y\in B]}:B\to C$
is given as the equivalence class
$$\ec{s(t)\in C\ [x\in A]}:A\to C$$
with identity arrows given by
$\ec{x\in A\ [x\in A]}:A\to A$.

The ``syntactic category of sets in \mtt'' \SM is the full
subcategory of \CM on the closed sets.

Like in the previous example, the two categories just introduced have
weak limits. 

\begin{lemma}\label{synmtt}
The categories $\CM$ and $\SM$ are cartesian and with weak
equalizers. 
\end{lemma}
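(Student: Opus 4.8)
The plan is to follow the proof of Lemma~\ref{synmltt} almost verbatim, the only genuine extra work being bookkeeping on levels: products and weak equalizers of sets must again be sets, and those of collections must again be collections. Since \cat{SM} is by definition the full subcategory of \cat{CM} on the closed sets, it suffices to exhibit each construction in \cat{CM} and observe that it preserves sethood, so that the very same objects serve as the limits in \cat{SM}; both categories are then handled at once.

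First I would take the singleton $N_1$ as terminal object: it is a set, hence a collection, and its universal property is immediate since any two proof-terms into $N_1$ are provably equal in \mtt. For the product of closed collections $A$ and $B$ I would take the dependent sum $\ds_{x\in A}B$ with its two projections, exactly as in \ref{synmltt}: the identity
$$\mathsf{idpeel}(z,(x,y).\mathsf{id}(<x,y>))\in\mathsf{Id}(\ds_{x\in A}B,<\pi_1(z),\pi_2(z)>,z)$$
holds by the $\beta$-rules of \mtt, so the projections are jointly universal up to provable equality. The point to record is closure: \mtt forms dependent sums at both levels, so that summing a family of sets over a set yields a set and summing a family of collections over a collection yields a collection; hence $\ds_{x\in A}B$ is a set when $A$ and $B$ are sets and a collection when they are collections.

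Next I would construct a weak equalizer of $\ec{t\in B\ [x\in A]},\ec{t'\in B\ [x\in A]}:A\to B$ as
$$\ec{\pi_1(z)\in A\ [z\in\ds_{x\in A}\mathsf{Id}(B,t,t')]}:\ds_{x\in A}\mathsf{Id}(B,t,t')\to A.$$
The weak universal property is verified precisely as in \ref{synmltt}: any arrow into $A$ that equalizes the pair carries a proof-term inhabiting the relevant identity proposition, and pairing that witness with the given arrow produces a (not necessarily unique) mediating morphism.

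The main obstacle, and the only substantive difference from the Martin-L\"of case, is to check that this last object lies in the correct category, since in \mtt propositions form a level genuinely distinct from sets and collections. The required fact is that the identity proposition $\mathsf{Id}(B,t,t')$ is available over both sets and collections, and that \mtt is closed under forming the dependent sum of a set (respectively a collection) along a proposition, the result being again a set (respectively a collection); see \cite{m09}. Consequently $\ds_{x\in A}\mathsf{Id}(B,t,t')$ is a set whenever $A$ and $B$ are sets and a collection whenever they are collections, so the construction delivers a weak equalizer in \cat{SM} in the first case and in \cat{CM} in the second. Because the constructions carried out in \cat{CM} preserve sethood, they restrict to the full subcategory \cat{SM}, which establishes the statement for both categories simultaneously.
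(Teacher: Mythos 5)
Your proof is correct and is essentially the paper's own argument: the paper proves Lemma~\ref{synmtt} simply by declaring it ``analogous to that of \ref{synmltt}'', i.e.\ the same $N_1$, $\ds_{x\in A}B$, and $\ds_{x\in A}\mathsf{Id}(B,t,t')$ constructions you use. The only thing you add is to make explicit the level bookkeeping (closure of sets and collections in \mtt under dependent sums, including sums along identity propositions), which the paper leaves implicit in the word ``analogous''.
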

\begin{proof}
The proof is analogous to that of \ref{synmltt}.
\end{proof}

We now define a doctrine on each syntactic category defined on
\mtt in a similar way to that for Martin-L{\"o}f's type
theory.

The functor
$$\Fm:\CM\op\longrightarrow\mathsf{InfSl}$$
is defined on a closed collection $A$ as the partial order 
consisting of 
\begin{roll}
\item[equivalence classes $\ec{\phi\prp [x\in A]}$ of
predicates] in \mtt depending on $A$ with respect to
equiprovability, {\it i.e.} 
$\phi\prp [x\in A]\sim\phi'\prp [x\in A]$ if
there is a proof of
$\phi\leftrightarrow\phi'\prp [x\in A]$ in \mtt,
\item
[$\ec{\phi\prp [x\in A]}\leq\ec{\psi\prp [x\in A]}$]
if there is a proof-term 
$r\in\psi\ [x\in A, w\in \phi]$ in \mtt.
\end{roll}
The action of the functor on arrows in \CM is given by
substitution.

The  doctrine 
$$\Fs:\SM\op\longrightarrow \mathsf{InfSl}$$
is defined on a closed set $A$ as the partial order consisting of
\begin{roll}
\item[equivalence classes $\ec{\phi\prps [x\in A]}$] of
\dfn{small} propositions $\phi(x)\prps [x\in A]$ depending on $A$,
{\it i.e.} propositions closed only under quantification on sets, with
respect to equiprovability in \mtt,
\item[$\ec{\phi\prps [x\in A]}\leq\ec{\phi'\prps [x\in A]}$] 
if there is a proof-term of
$r\in \phi'\ [x\in A, w\in \phi]$ in \mtt.
\end{roll}
The action of the functor on arrows in \SM is given by
substitution.

\begin{prop}
\label{mttd}
The doctrines $\Fm:\CM\op\longrightarrow\mathsf{InfSl}$ 
and $\Fs:\SM\op\longrightarrow\mathsf{InfSl}$ are existential
elementary, implicational and universal, with full weak comprehensions and comprehensive weak
equalizers. Moreover, the doctrine $\Fs$ is weakly cartesian
closed.
\end{prop}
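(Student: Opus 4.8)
The plan is to imitate closely the proofs of Propositions~\ref{mlttd} and~\ref{mlbc}, since \mtt supports the same type-theoretic constructions as MLTT---identity types $\mathsf{Id}$, dependent sums $\ds$, dependent products $\dpd$, and a proof-term calculus for its propositions---while $\cat{CM}$ and $\cat{SM}$ already have finite products and weak equalizers by~\ref{synmtt}. I would verify the two doctrines in parallel, performing each construction at the level of collections for $\Fm$ and at the level of sets for $\Fs$. The single genuinely new bookkeeping concerns $\Fs$: every predicate produced by a construction must be checked to remain a \emph{small} proposition.

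For the existential elementary structure together with full weak comprehensions and comprehensive weak equalizers I would transcribe the computations of~\ref{mlttd}, the only adaptation being that where MLTT identifies propositions with sets one now uses the primitive propositions of \mtt. Thus $\delta_A$ is the propositional identity $\mathsf{Id}(A,x_1,x_2)$; finite infs are conjunctions together with the true proposition; the existential left adjoint to substitution along $\ec{t(x)\in B\ [x\in A]}$ is represented by the proposition $\exists\,x\in A.\,(\mathsf{Id}(B,t(x),y)\wedge\phi(x))$; a weak comprehension of $\phi\prp[x\in A]$ is the first projection $\ec{\pi_1(w)\in A\ [w\in\ds_{x\in A}\phi]}$ out of the sum of $A$ with the proofs of $\phi$, full by the same proof-term exhibited in~\ref{mlttd}; and weak equalizers are comprehensions by the identical direct calculation. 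For $\Fs$ one checks in addition that all of these predicates are small, which holds because propositional identity on a set is small and the quantifiers above range over a set.

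The implicational and universal structures are the features not present in the bare MLTT doctrine $\F$. For the implicational structure I would let $\alpha\Implies\beta$ be represented by the \mtt-implication $\phi\Implies\psi$, whose being right adjoint to $\alpha\Land\blank$ is immediate from the introduction and elimination rules of implication. For the universal structure I would represent the right adjoint $\B_{\pr}$ along a projection by universally quantifying the variable the projection discards, the Beck-Chevalley condition of~\ref{unh} recording the commutation of substitution with universal quantification just as in the first-order example there. The decisive point for $\Fs$ is that the projections of $\cat{SM}$ discard a \emph{set}-variable, and small propositions are closed by definition precisely under quantification on sets, so both the existential and the universal adjoints return small propositions; likewise $\phi\Implies\psi$ is small when $\phi,\psi$ are. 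For $\Fm$ the same constructions are available over collections, since general propositions are closed under quantification over collections as well.

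Finally, that $\Fs$ is weakly cartesian closed is a statement about its base category $\cat{SM}$, to be proved as in~\ref{mlbc}: for closed sets $A$ and $B$ the dependent product set $\dpd_{x\in A}B$ serves as a weak exponential, with weak evaluation given by application exactly as displayed in~\ref{mlbc}, the evaluation being only \emph{weak} for the same reason---pointwise propositional equality of two functions need not inhabit the identity type of their $\lambda$-abstractions. This is asserted for $\Fs$ alone because in \mtt sets are closed under dependent products indexed by a set whereas collections are not, so the corresponding weak exponential of two collections need not be an object of $\cat{CM}$. The principal obstacle throughout is accordingly not any isolated calculation but the uniform verification that, for $\Fs$, none of the operations used---propositional identity, dependent sums of proofs, conjunction, implication, and quantification over sets---ever escapes the class of small propositions.
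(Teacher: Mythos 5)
Your proposal is correct and follows essentially the same route as the paper, whose entire proof of \ref{mttd} is the single remark that the argument is analogous to propositions~\ref{mlttd} and~\ref{mlbc}; your elaborations---using \mtt's primitive implication and universal quantification for the implicational and universal structures, checking that identity on a set, conjunctions, implications, and quantification over sets keep small propositions small, and observing that only $\cat{SM}$ is weakly cartesian closed since collections are not closed under dependent products---are exactly the adaptations that remark leaves implicit.
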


\begin{proof}
The proof for each is analogous to that for Martin-L{\"o}f's type
theory in
propositions~\ref{mlttd} and \ref{mlbc}. 
\end{proof}

\begin{remark}
Similarly to~\ref{mlttequiv},
there is an essential equivalence between the completion $\Q{\Fm}$ of
the doctrine $\Fm$ and the model of extensional collections in
\cite{m09}, and there is another between the completion $\Q{\Fs}$ of
the doctrine $\Fs$ and the model of extensional sets in \cite{m09}. 

Then, if one recalls that \emtt is
the extensional version of \mtt with the addition of effective quotients,
proof-irrelevance of propositions 
and extensional equality of functions, it is easy to see that
there is an obvious interpretation of
\emtt in the doctrine 
$\Q{\Fm}$ provided that a choice of the structure is given.

Actually, by means of logical constructors of \emtt, following  the paradigm proposed in \cite{LawvereF:adjif,jacobbook},
we can describe the logical theories modelled by most of the doctrines considered in this paper.

For example,
 the notion of elementary existential doctrine  with full
 comprehensions and comprehensive equalizers is the necessary
 structure to interpret the set-theoretic fragment of \emtt  where sets just include the singleton set,
 binary products and indexed sums of a set with a proposition, and
 whose propositions are just closed  under
 conjunctions, extensional equality, existential quantifiers and the truth constant. Then the extension with \emtt-quotients  captures the logical theory behind
 existential doctrines in \QH.
\end{remark}

\begin{remark}
\label{notex}
The quotient model $\ct{Q}_{\Fs}$ of $\Fs$ for the fragment
of \mtt in \cite{iac} provides a
genuine example that the elementary quotient completion 
is not an exact completion.

Indeed, consider that in an exact, locally
cartesian closed category, (AUC) always holds, as shown in
\cite{tumscs}.
Now, if $\ct{Q}_{\Fs}$ were exact, then in the original doctrine
$\Fs$ (AUC) from any object to any other would hold by \ref{uac}.
But \cite{iac} shows that (AUC) from the natural numbers to the
natural numbers does not hold.

Knowing that in \CTT (AUC) from an object to another
does not hold in general, see \cite{streicher}, with the
same argument 
as above we get another example of elementary quotient completion that
is not exact: take  $\ct{Q}_{F^{\CTT}}$ with $F^{\CTT}$ defined as $\Fs$
but within  \CTT.
\end{remark}

\section{Conclusions and future work}
The  notion of elementary quotient
completion developed in the previous sections was inspired by the need
to give an abstract presentation of the quotient model used in
\cite{m09}. This notion is more general than that of exact
completion. As remarked in \ref{exlexcom}, an exact
completion of a cartesian category with weak pullbacks is an instance
of the elementary quotient completion construction.
On the other hand, as remarked in \ref{notex}, there are 
elementary quotient completions which are not exact.

Relevant instances of elementary quotient completion 
are used to formalize mathematics within an intensional
type theory. They are applied  to turn a theory 
with {\it intensional} and {\it weak} structures
 into one with {\it extensional} and {\it strong} ones.
For example, the category
$\ct{Q}_{\Fs}$ of extensional sets over the intensional theory
\mtt, as well as the category $\ct{Q}_{\F}$ of setoids over
Martin-L{\"o}f's type theory, are cartesian closed, while the
corresponding categories of sets of \mtt and of Martin-L{\"o}f's 
type theory are not. Moreover
 the doctrines $\F$, $\Fs$ and $\Fm$
associated to \mtt and to Martin-L{\"o}f's
type theory satisfy only 
weak comprehension,  because their
propositions may be equipped with more
than one proof.

But it is worth mentioning that to view an elementary quotient
completion, as that over \mtt in \cite{m09}, as a model of an
extensional theory as \emtt in \cite{m09}, one needs to solve a
coherence problem. 
Indeed, the base category of an elementary quotient completion, as
well as of an exact completion in \cite{CarboniA:freecl}, 
does not provide an explicit construction for the categorical
structure used to interpret logical sorts.

In \cite{m09} we solved the problem by using an {\it ad hoc} coherence
theorem to select the model structure necessary to interpret
the syntax of  \emtt in the elementary quotient completion over \mtt. Since 
that theorem
heavily relies on the fact that the syntax of \mtt is defined
inductively, a possible direction of further investigation
 is to explore how to interpret
a logical theory corresponding to doctrines in \QH in (the doctrine
 $\Q{P}$ of) an elementary quotient completion of a doctrine $P$ in  \CH.

In future work we shall 
 investigate the case of completing an existential elementary doctrine
with respect to quotients with a more relaxed notion of arrow and
 compare it with the elementary quotient completion introduced here.

\paragraph*{Acknowledgements} The authors would like to acknowledge
Giovanni Sambin and Ruggero Pagnan for very many, useful and intensive
discussions on the subject. Discussions with Fabio Pasquali and
Lorenzo Malatesta were also very useful for the second author.

\bibliographystyle{plain}
\bibliography{MaiettiRosolini_lu}
\end{document}